\newtheorem{theorem}{Theorem}[section]
\newtheorem{lemma}[theorem]{Lemma}
\newtheorem{proposition}[theorem]{Proposition}
\newtheorem*{assumption(A)}{Condition (A)}
\newtheorem{conjecture}[theorem]{Conjecture}
\theoremstyle{remark}
\theoremstyle{remark}
\newtheorem*{remark}{{\bf Remark}}
\newtheorem*{example}{{\bf Example}}
\numberwithin{equation}{section}
\newcommand{\CC}{\mathbb{C}}
\newcommand{\RR}{\mathbb{R}}
\newcommand{\dd}{{\rm d}}
\newcommand{\MM}{\mathcal M}
\newcommand{\A}{\mathcal A}
\newcommand{\LL}{\mathcal L}
\def\blfootnote{\xdef\@thefnmark{}\@footnotetext}
\begin{document}

\begin{titlepage}
\title{\bf Moduli Spaces of Abelian Vortices on K\"ahler Manifolds}
\vskip -70pt
\vspace{3cm}

\author{{J. M. Baptista}}  

\date{August 2013}

\maketitle

\thispagestyle{empty}
\vspace{2cm}
\vskip 20pt
{\centerline{{\large \bf{Abstract}}}}
\noindent
We consider the self-dual vortex equations on a positive line bundle $L \rightarrow M$ over a compact K\"ahler manifold of arbitrary dimension. When $M$ is simply connected, the moduli space $\MM$ of vortex solutions is a projective space. When $M$ is an abelian variety, $\MM$ is the projectivization of the Fourier-Mukai transform $\hat{L} \rightarrow \hat{M}$ over the dual variety. We extend this description of the moduli space to the abelian GLSM, i.e. to vortex equations with a torus gauge group acting linearly on a complex vector space. After establishing the Hitchin-Kobayashi correspondence appropriate  for the general abelian GLSM, we give explicit descriptions of the moduli space $\MM$ in the case where the manifold $M$ is simply connected or is an abelian variety. In these examples we compute the K\"ahler class of the natural $L^2$-metric on the moduli space. In the simplest cases we compute  the volume and total scalar curvature of $\MM$. Finally, we note that for abelian GLSM the moduli space $\MM$ is a compactification of the space of holomorphic maps from $M$ to toric targets, just as in the usual case of $M$ being a Riemann surface. This leads to various natural conjectures, for instance explicit formulae for the volume of the space of maps $ \CC \mathbb{P}^m \rightarrow \CC \mathbb{P}^n$.

\blfootnote{\small  {\sl \bf  Keywords} : Vortex equations; moduli space; K\"ahler manifold; $L^2$-metric; holomorphic maps;}

\end{titlepage}

\tableofcontents

\section{Introduction}

The simplest type of vortex equations on a K\"ahler manifold is defined  on a hermitian line bundle $L \longrightarrow M$ over the manifold. The variables are a unitary connection $A$ and a smooth section $\phi$ of $L$. To each pair $(A, \phi)$ one associates a positive energy
\begin{equation}\label{energy_functional}
E(A, \phi) \ = \ \int_M \frac{1}{2\,e^2}\, \vert F_A \vert^2 \ + \ \vert \dd_A \phi \vert^2  \ + \ \frac{e^2}{2} \, \big( \vert \phi \vert^2 - \tau \big)^2 \ , 
\end{equation}
where $\tau$ and $e^2$ are positive constants, $\dd_A \phi$ is the covariant derivative, and $F_A$ denotes the curvature form of the connection $A$. All the norms are taken with respect to the hermitian metric on $L$ and the K\"ahler metric on $M$. In two dimensions, this energy functional coincides with the usual static energy of the abelian Higgs model.
Assuming that $M$ is compact, a standard Bogomolny-type argument \cite{Brad} shows that the energy is bounded below by a constant,
\begin{equation}
E(A, \phi) \ \geq \ E_{vortex}  \ , 
\end{equation}
and that this minimum is attained precisely when the pair $(A, \phi)$ satisfies the so-called self-dual vortex equations. The equations read
\begin{align}
&\bar{\partial}_A \phi \ = \ 0   \label{vortex1} \\
&\Lambda F_A  \: -\:  i  e^2\:  \big( |\phi|^2 - \tau \big) \ = \ 0 \ \label{vortex2} \\
& F_A^{0,2} \ = \ 0 \ , \label{vortex3}
\end{align}
where $\Lambda F_A$ represents the contraction of the curvature with the K\"ahler form $\omega_M$ on $M$. When $M$ is a Riemann surface, $\Lambda F_A = \ast F_A$ is just the Hodge-dual and the last equation is trivially satisfied, since every 2-form has vanishing (0,2) Dolbeault component. The energy of a vortex solution on a $m$-dimensional compact K\"ahler manifold is given by
\begin{equation}\label{energy_vortex}
E_{vortex} \ = \  \int_M \frac{i \tau}{(m-1)!}\, F_A \wedge \omega_M^{m-1} \ + \ \frac{1}{2 e^2 \, (m-2)!} \, F_A \wedge F_A \wedge \omega_M^{m-2} \ .
\end{equation}
It does not depend on the particular solution $(A, \phi)$ because the cohomology class $[F_A]$ is just the Chern class $-2\pi i \, c_1(L)$, a topological invariant of the line bundle $L$.

In this article we will study the moduli space of solutions these vortex equations and of another, more general, flavour of vortex equations: those of the abelian gauged linear sigma-model (GLSM). These are written down in \eqref{v-eq}. All these equations and moduli spaces have been widely studied in the case where $M$ is a Riemann surface \cite{JT, Brad, Garcia-Prada, Morrison-Plesser, MS, Wehrheim}. 
In fact, in two dimensions, the subject has advanced much further than the abelian case, with an impressive body of work about non-abelian vortex equations, i.e. equations defined on vector bundles over $M$ (see for instance \cite{Brad2, Thaddeus, BDGW, EINOS, Tong, Bap2009} and the references therein), and vortex equations on bundles with non-linear fibres (\cite{CGMS, Woodward, Ziltener} and the references therein). 
When $M$ is a higher-dimensional K\"ahler manifold, however,  our understanding of vortex moduli spaces is much less thorough. Important results have already been obtained, such as a version of the Hitchin-Kobayashi correspondence \cite{Brad, Banfield, MiR}. In the abelian case, the simplest vortex equations were related to invariant Hermitian-Einstein connections and their moduli spaces of solutions were described as spaces of divisors on $M$ \cite{Garcia-Prada, Bap2006}. Nonetheless, for higher-dimensional $M$ there seems to be a general lack of explicit descriptions of the moduli spaces as global manifolds, even in the case of the simplest abelian vortices. Here we try to partially fill in this gap.

In Section 2 of this article we go through the basic analysis for the simplest abelian vortices, i.e. for equations \eqref{vortex1}-\eqref{vortex3} defined on a line bundle $L$. In this case, the main result of \cite{Brad} implies that the vortex moduli space $\MM$ is a projective space when $M$ is simply connected. When $M$ is an abelian variety, the moduli space is a projective bundle over dual variety $\hat{M}$;  more precisely, $\MM$ is the projectivization of the Fourier-Mukai transform $\hat{L} \rightarrow \hat{M}$ of the original line bundle $L$. All the results here are either well known, or are relatively simple calculations using standard technology. Nevertheless, we feel that this Section gives a useful overview of the techniques developed ahead, besides being important to introduce the necessary notation about vortex moduli spaces, abelian varieties and the universal bundle.

In Section 3 we consider vortex solutions of the general abelian GLSM. The main results are Theorems \ref{Hitchin_Kobayashi} and \ref{tau-stability}, which describe the Hitchin-Kobayashi correspondence for the GLSM and interpret the general $\tau$-stability of \cite{Banfield, MiR} in this setting. These results are applied in Sections 3.2 and 3.4, where we show that the vortex moduli space of the GLSM is a toric orbifold for simply connected $M$,  and that, for abelian varieties, it is a toric fibration over a cartesian product of copies of the dual variety. We also work out the topology of the natural ``universal" bundle over the product $M \times \MM$, a result necessary for the cohomology calculations in Section 4.

General arguments say that vortex moduli spaces have a natural K\"ahler structure \cite{MiR}. The complex structure on $\MM$ is canonically induced by the complex structures on $M$ and on $L$. The compatible K\"ahler metric $\omega_\MM$ can be described roughly as follows. If $(A_t, \phi_t)$ is a one-parameter family of vortex solutions, a tangent vector to this path is represented by the derivative $\dot{\phi}$ -- a section of $L$ -- and the derivative $\dot{A}$ -- a 1-form on $M$. These derivatives satisfy the linearized vortex equations. The standard metric on the moduli space $\MM$ is then determined by the $L^2$-norm
\begin{equation} \label{vortexmetric}
\Vert \dot{A} + \dot{\phi} \Vert^2_{\omega_\MM} \ = \ \int_M \Big(\, \frac{1}{4e^2} \: \vert \dot{A} \vert^2 \ + \ \vert \dot{\phi} \vert^2 \, \Big)\, \frac{\omega_M^m}{m!}
\end{equation}
applied to the horizontal part of the tangent vector, i.e., to the component of $( \dot{A} , \dot{\phi})$ perpendicular to all vectors tangent to gauge transformations. This metric on  $\MM$ has a well known interpretation as the reduced metric on a (infinite-dimensional) symplectic quotient. In physics, it gives relevant information about the low-energy dynamics of vortices \cite{MS}. It is very hard in general to obtain explicit knowledge about $\omega_\MM$. Our work in Section 4 is to compute the cohomology class $[\omega_\MM]$ of the K\"ahler form on the moduli space. We are able to do so both for the simpler model of vortices on line bundles and for the more general abelian GLSM. 
The base $M$, as always, can either be simply connected or be an abelian variety. In the simplest examples, our knowledge of the K\"ahler class $[\omega_\MM]$ and of the cohomology ring of $\MM$ allows us to write down explicit formulae for the volume and total scalar curvature of $(\MM , \omega_\MM)$.  The results of this Section extend formulae obtained by  Manton and Nasir \cite{MN} and the author  \cite{Bap2010} in the case where $M$ is a Riemann surface. We rely heavily on a description of $\omega_\MM$ originally due to Perutz \cite{Perutz} and generalized in  \cite{BS, Bap2010}.

In the final part of the paper, Section 5, we look at the relation between vortex moduli spaces in abelian GLSM and spaces of holomorphic maps $M \rightarrow X$ to toric targets. This is a very familiar and well studied story in the case where $M$ is a Riemann surface. In fact, in two dimensions, one of the main motivations to consider abelian GLSM is that  its vortex moduli space is a natural compactification of the space of holomorphic curves in toric varieties, and hence can be used to compute  Gromov-Witten invariants of these varieties \cite{Witten, Morrison-Plesser, Mirror Symmetry}. 
Much less attention has been devoted to this line of ideas when $M$ is a higher-dimensional K\"ahler manifold. Since many of the techniques extend readily from the two dimensional case, in Section 5 we go through this natural generalization, describing the natural embedding $\mathcal{H} \hookrightarrow \MM$ of the (generally non-compact) space of holomorphic maps into the corresponding (compact) vortex moduli space (Proposition \ref{embedding}). One notable difference from the two dimensional case is that $\mathcal{H}$ does not generally embed as an open and dense subset of $\MM$. Roughly speaking, this will happen only if the dimension of $M$ is small enough compared to the dimension of the target $X$ (Proposition \ref{open_embedding_condition}) and the line bundles of the GLSM have enough holomorphic sections. 

One of our original motivations to study the embedding $\mathcal{H} \hookrightarrow \MM$ is to compare the natural K\"ahler metrics on both these spaces. Recall that $\mathcal{H}$ is a space of maps between Riemannian manifolds, and hence has a natural $L^2$-metric $\omega_\mathcal{H}$. It is natural to ask if there is any relation between $\omega_\mathcal{H}$ and the pullback of the vortex metric $\omega_\MM$ by the embedding $\mathcal{H} \hookrightarrow \MM$. Following \cite{Bap2010}, in Section 5.3 we conjecture that the metric  $\omega_\mathcal{H}$ is the pointwise limit of $\omega_\MM$ when the gauge coupling constant $e^2$ goes to infinity.\footnote{This conjecture has now been proved in the case of maps from Riemann surfaces to projective space. The proof is presented in \cite{Liu}, and appeared on the arXiv after the first version of the present article.} In particular, when $\mathcal{H}$ embeds as an open dense subset of $\MM$, we should have that
\[
\text{\rm Vol}\, (\mathcal{H}, \omega_{\mathcal{H}}) \ = \ \lim_{e^2 \rightarrow + \infty} \: \text{\rm Vol}\, (\MM, \omega_{\MM })  \ .
\]
Taking the limit of the explicit formulae for $\text{\rm Vol}\, (\MM, \omega_{\MM })$ obtained in Section 4, we can thus propose a few conjectural formulae for $\text{\rm Vol}\, (\mathcal{H}, \omega_{\mathcal{H}})$, as well as for the total scalar curvature of $(\mathcal{H}, \omega_{\mathcal{H}})$. See for instance the Example in Section 5.3. It seems non-trivial to check these formulae by direct computations on $\mathcal{H}$, since this space is not compact and the metric $\omega_\mathcal{H}$ cannot in general be compactified as a smooth metric (it can have unbounded scalar curvature). In the very special instances where direct computations have been performed \cite{Speight}, they agreed with analogous conjectures made in \cite{Bap2010}.

$\ $


\section{Vortices on line bundles}

\subsection{General description}

The simplest vortex equations on a $m$-dimensional K\"ahler manifold are defined  on a hermitian line bundle $L \rightarrow M$. They were written down in \eqref{vortex1}-\eqref{vortex3}. 
From equation \eqref{vortex3} we see that a necessary condition for the existence of vortex solutions is that
\begin{equation}\label{type_chern_class}
c_1(L) \ \in \ H^{1,1}(M, \mathbb{C}) \ ,
\end{equation}
i.e. that $L$ admits a holomorphic structure. Assuming that $M$ is compact, the integral of the second equation over $(M, \omega_M)$ shows that another necessary condition for the existence of solutions with non-trivial section $\phi$ is that
\begin{equation}\label{stability1}
\sigma \ := \  \tau \, \text{\rm Vol}\, M  \  - \  \frac{2 \pi \, m}{e^2} \, \, \frac{c_1(L)^\parallel}{[\omega_M]}\, \text{\rm Vol}\, M \ > \ 0 \ ,
\end{equation}
where $\parallel$ denotes the component parallel to the vector $[\omega_M]$ in the euclidean space $H^2 (M, \mathbb{R})$. See Section 3.1 for more details.
The real number $\sigma$ will be called the stability parameter. When these two conditions are satisfied, we can have plenty of vortex solutions, as follows from the basic result of Bradlow:
\begin{proposition}\cite{Brad}
Assume that condition \eqref{stability1} is satisfied. Then for any pair 
$(A, \phi)$ with non-zero $\phi$ satisfying $\bar{\partial}_A \phi = 0$ and $F_A^{0,2} = 0$, there exists a gauge transformation $g : M
\rightarrow \CC^\ast $, unique up to multiplication by  $U(1)$-gauge
transformations, such that $(g(A), g(\phi))$ is a solution of
the full vortex equations. All vortex solutions can be obtained in this way.
\end{proposition}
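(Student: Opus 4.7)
The plan is the standard Hitchin-Kobayashi strategy: reduce the moment-map equation \eqref{vortex2} to an elliptic PDE on $M$ for the modulus of a complex gauge transformation, and solve it by variational methods.

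First I would decompose a candidate complex gauge transformation as $g = h \cdot e^u$ with $h : M \to U(1)$ and $u : M \to \mathbb{R}$. Since all three vortex equations are invariant under the unitary gauge group, and the two ``holomorphic'' equations $\bar{\partial}_A \phi = 0$ and $F_A^{0,2} = 0$ are even invariant under the full complex gauge group, the problem reduces to finding a real $u$ such that the transformed pair $(g(A), g(\phi))$ satisfies \eqref{vortex2}. A short calculation using $|e^u \phi|^2 = e^{2u}|\phi|^2$, together with the standard identity for the change of a Chern curvature under a conformal rescaling $h \mapsto e^{2u} h$ of the hermitian metric, turns \eqref{vortex2} into a semilinear elliptic equation of the schematic form
\begin{equation*}
\Delta u \ + \ e^2 \bigl( e^{2u}|\phi|^2 - \tau \bigr) \ = \ i\Lambda F_A \ ,
\end{equation*}
with the convention that $\Delta$ is the non-negative Laplacian. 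Integrating this equation against $\omega_M^m / m!$ and recognising the integral of $i\Lambda F_A$ as a topological quantity shows that the integrability requirement is precisely the stability condition $\sigma > 0$.

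The core analytical step is to solve this PDE. I would realise it as the Euler-Lagrange equation of the strictly convex functional
\begin{equation*}
J(u) \ = \ \int_M \Bigl( \tfrac{1}{2}|\nabla u|^2 \ + \ \tfrac{e^2}{2}|\phi|^2 e^{2u} \ - \ e^2 \tau \, u \ + \ (i\Lambda F_A)\, u \Bigr) \, \frac{\omega_M^m}{m!}
\end{equation*}
on the Sobolev space $W^{1,2}(M,\mathbb{R})$. The Dirichlet term together with the exponential term gives strict convexity, since $\phi \not\equiv 0$. Coercivity is the subtle point: writing $u = \bar u + u_0$ with $u_0$ of mean zero, the linear terms contribute $-\sigma \bar u$, while the exponential can be controlled via a Moser-Trudinger type inequality. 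The assumption $\sigma > 0$ is exactly what is needed to make $J \to +\infty$ as $\bar u \to -\infty$, whereas the exponential dominates in the regime $\bar u \to +\infty$. Hence $J$ admits a unique minimiser $u$, smooth by elliptic bootstrap, which produces the desired $g = e^u$.

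Uniqueness of $g$ modulo $U(1)$-gauge is equivalent to uniqueness of $u$ and follows from the strict convexity of $J$, or alternatively from a direct maximum principle argument applied to the difference of two solutions. Finally, the statement that \emph{every} vortex arises this way is tautological, since a vortex solution already satisfies the two holomorphic equations and is realised by the trivial choice $g = 1$. The main obstacle in the whole argument is the coercivity estimate for $J$; this is the only step that uses the stability condition in an essential way, and it is where the Moser-Trudinger inequality enters.
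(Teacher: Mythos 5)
The paper itself offers no proof of this proposition: it is quoted verbatim from Bradlow's paper \cite{Brad}. Your strategy is exactly the standard one used there and throughout the literature: absorb the unitary part of $g$ into the $U(1)$ ambiguity, use complex-gauge invariance of $\bar{\partial}_A\phi=0$ and $F_A^{0,2}=0$ to reduce everything to the single real equation $\Delta u + e^2|\phi|^2e^{2u} = e^2\tau - i\Lambda F_A$, observe that integrating over $M$ turns the solvability constraint into $\sigma>0$ from \eqref{stability1}, and obtain existence and uniqueness from a strictly convex functional. The reduction, the identification of the stability condition, the uniqueness-modulo-$U(1)$ via strict convexity (the second variation $\int_M |\nabla v|^2 + 2e^2|\phi|^2e^{2u}v^2$ vanishes only for $v\equiv 0$ because $\phi$ is holomorphic and $\not\equiv 0$), and the remark that surjectivity is tautological are all correct.

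The one step I would not accept as written is the analytic core. The Moser--Trudinger inequality controls $\int_M e^{cu}$ by $\Vert u\Vert_{W^{1,2}}$ only in real dimension two; the proposition is stated for K\"ahler manifolds of arbitrary complex dimension $m$, and for $2m>2$ the space $W^{1,2}(M)$ embeds only into $L^{2n/(n-2)}$ with $n=2m$, so $\int_M|\phi|^2e^{2u}$ can be infinite and $J$ is not finite-valued on the space over which you minimise. The coercivity estimate itself does not actually need Moser--Trudinger: writing $u=\bar u+u_0$ with $u_0$ of mean zero, what is required is a lower bound on $\int_M|\phi|^2e^{2u_0}$, and Jensen's inequality with respect to the probability measure $|\phi|^2\,\omega_M^m/(m!\int_M|\phi|^2)$ gives $\int_M|\phi|^2e^{2u_0}\ \geq\ \big(\int_M|\phi|^2\big)\exp\big(-C\Vert\nabla u_0\Vert_{L^2}\big)$, valid in every dimension. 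For existence and regularity one must then either treat $J$ as a convex, weakly lower semicontinuous functional with values in $(-\infty,+\infty]$ (the exponential term is nonnegative, so Fatou applies along a minimising sequence) and derive the Euler--Lagrange equation by testing against bounded variations before bootstrapping, or else run the whole argument in $W^{2,p}$ with $p>2m$, where $u$ is continuous and the exponential term is harmless, via the method of continuity or sub/super-solutions in the style of Kazdan--Warner. With that repair the proof is complete; as it stands, the step that you yourself identify as the crux rests on an inequality that is unavailable in the stated generality.
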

\noindent
So the set of full vortex solutions up to $U(1)$-gauge transformations coincides with the set of solutions of \eqref{vortex1} and \eqref{vortex3} up to $\CC^\ast$-gauge transformations. This is an example of a Hitchin-Kobayashi correspondence. As is well-known,  there is also a bijective correspondence between connections on $L\rightarrow M$ satisfying $F_A^{0,2} =0$ and holomorphic structures on the line bundle \cite{Kobayashi}. Moreover, if $A$ is such a connection, then $\bar{\partial}_A \phi \ = \ 0$ if and only if the section $\phi$ is holomorphic with respect to the holomorphic structure on $L$ induced by $A$. It follows that the moduli space $\MM$ of vortex solutions coincides the space of pairs ``holomorphic structure plus a non-zero holomorphic section" on $L$, up to isomorphism. It is also a standard fact that such pairs are in one-to-one correspondence with effective divisors on $M$ representing the cohomology class $c_1(L)$. So we have:

\begin{proposition}\cite{Brad} \label{divisors}
Let $M$ be a compact K\"ahler manifold and assume that condition \eqref{stability1} is satisfied. Pick any effective divisor $D$ on $M$ representing the homology class Poincar\'e-dual to $c_1(L)$. Then there is a solution $(A , \phi )$ of the vortex equations such that $D$ coincides with the divisor of the zero set of $\phi$ regarded as a holomorphic section of $L$. This solution is unique up to $U(1)$-gauge transformations. All vortex solutions  are obtained in this way. 
\end{proposition}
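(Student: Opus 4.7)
The strategy is to reduce the statement to the preceding proposition via the standard dictionary between effective divisors on a compact Kähler manifold and pairs (holomorphic line bundle, nonzero holomorphic section modulo scalars).

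First I would translate the divisor $D$ into holomorphic data. Because $D$ is effective and $[D]$ is Poincar\'e-dual to $c_1(L)$, the associated line bundle $\mathcal{O}(D)$ is holomorphically isomorphic to some holomorphic structure on the smooth line bundle $L$, and under this isomorphism the tautological section of $\mathcal{O}(D)$ pulls back to a holomorphic section $\phi_0$ of $L$ whose divisor of zeros is exactly $D$; moreover $\phi_0$ is determined by $D$ up to multiplication by a constant in $\CC^\ast$. Via the Koszul--Malgrange/Kobayashi correspondence recalled just above, the chosen holomorphic structure corresponds to a unique unitary connection $A_0$ with $F_{A_0}^{0,2}=0$, and the holomorphicity of $\phi_0$ becomes $\bar{\partial}_{A_0}\phi_0 = 0$.

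Next I would apply the preceding proposition. Since condition \eqref{stability1} is assumed, there exists a complex gauge transformation $g : M \to \CC^\ast$, unique up to multiplication by $U(1)$-gauge transformations, such that $(A,\phi) := (g(A_0),\, g(\phi_0))$ is a full vortex solution. As $g$ is nowhere vanishing, the zero locus of $\phi = g \cdot \phi_0$ coincides with that of $\phi_0$ with the same multiplicities, so $\mathrm{div}(\phi) = D$ as required. Conversely, any vortex solution $(A,\phi)$ with $\phi \not\equiv 0$ determines, through the holomorphic structure induced by $A$ and the equation $\bar{\partial}_A \phi = 0$, an effective divisor $\mathrm{div}(\phi)$ representing $c_1(L)$, which shows the construction is exhaustive.

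Finally I would verify that the $U(1)$-equivalence class of $(A,\phi)$ depends only on $D$, and not on the intermediate choices. The rescaling freedom $\phi_0 \mapsto \lambda\phi_0$ with $\lambda\in\CC^\ast$ is absorbed by replacing $g$ with $\lambda^{-1} g$: the complex gauge action on the connection is insensitive to multiplication of $g$ by a constant (only derivatives of $g$ enter), while $(\lambda^{-1}g)(\lambda\phi_0) = g\phi_0 = \phi$, so the resulting pair $(A,\phi)$ is literally unchanged. Together with the $U(1)$-uniqueness of $g$ provided by the preceding proposition, this yields uniqueness up to $U(1)$-gauge. The only step that requires any real care is this absorption of the $\CC^\ast$-scalar ambiguity into the complex gauge; every other ingredient is either standard holomorphic geometry or a direct quotation of the preceding proposition, so once that point is checked the statement follows.
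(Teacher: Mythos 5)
Your proposal is correct and follows essentially the same route as the paper, which derives this proposition by combining the preceding Hitchin--Kobayashi statement with the standard dictionary between integrable unitary connections and holomorphic structures, and between pairs (holomorphic structure, nonzero holomorphic section up to isomorphism) and effective divisors in the class $c_1(L)$; the paper leaves these identifications as a one-paragraph remark, whereas you spell out the only delicate point, namely that the $\CC^\ast$-scalar ambiguity in the section is absorbed into the complex gauge transformation. No gaps.
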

\noindent
Thus choosing a vortex solution is equivalent to picking an effective divisor of hypersurfaces on $M$. When $M$ is a Riemann surface, solutions are determined by a choice of $d$ points on $M$, where $d$ is the degree of $L$. In other words, they are determined by a point in the symmetric product $\text{Sym}^d M$. This description of the moduli space as a space of divisors is possible for vortices on line bundles, also called local vortices, but does not in general extend to other types of vortices.

\subsection{Case of simply connected manifolds}

\subsubsection*{Moduli space}

When the K\"ahler base $M$ is simply connected, besides describing the vortex moduli space as a space of divisors, it is easy to describe $\MM$ as a complex manifold. The arguments are all very standard.

\begin{proposition} \label{modulispace1}
Let $M$ be a $m$-dimensional, simply connected, compact K\"ahler manifold, and let $L \rightarrow M$ be a hermitian line bundle satisfying conditions \eqref{type_chern_class} and \eqref{stability1}. Denote by $H^0(M; L)$ the vector space of holomorphic sections of $L$ equipped with its unique holomorphic structure. Then the moduli space of vortex solutions is the projective space
\[
\MM \ \simeq \ \mathbb{P}\big( H^0(M; L) \big) \  .
\]
In particular, $\MM$ is empty if $L$ does not have any non-zero holomorphic sections. 
\end{proposition}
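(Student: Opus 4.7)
The plan is to combine the Hitchin-Kobayashi reduction (preceding proposition) with the fact that a simply connected compact K\"ahler manifold admits very few holomorphic line bundles and very few holomorphic gauge transformations, and then read off the projective space of sections directly.

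First I would restate what has to be classified: by the preceding proposition, $\MM$ coincides with the set of isomorphism classes of pairs $(\bar\partial_A,\phi)$, where $\bar\partial_A$ is a holomorphic structure on $L$ (equivalently a connection with $F_A^{0,2}=0$) and $\phi$ is a nonzero holomorphic section of the resulting holomorphic line bundle, two pairs being identified when they are related by a smooth $\CC^\ast$-gauge transformation $g:M\to\CC^\ast$.

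Next I would show that there is only one holomorphic structure on $L$ up to isomorphism. Since $M$ is simply connected, $H^1(M,\mathbb{Z})=0$; since $M$ is K\"ahler, the Hodge decomposition forces $H^1(M,\mathcal O_M)\simeq H^{0,1}(M,\CC)=0$ as well. The exponential sheaf sequence therefore identifies $\mathrm{Pic}(M)$ with a subgroup of $H^2(M,\mathbb Z)$, so any two holomorphic structures on $L$ are isomorphic, i.e.\ related by a complex gauge transformation. Thus $\MM$ is the quotient of the set of nonzero holomorphic sections of the unique holomorphic bundle $L$ by the stabilizer in the $\CC^\ast$-gauge group of a fixed $\bar\partial_A$.

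Then I would identify that stabilizer. A gauge transformation $g:M\to\CC^\ast$ preserves $\bar\partial_A$ precisely when $\bar\partial g=0$, i.e.\ when $g$ is holomorphic on $M$. Since $M$ is compact and connected, the only such $g$ are the constant maps, giving a stabilizer $\CC^\ast\subset\mathcal G^{\CC}$ acting on $H^0(M;L)$ by scalar multiplication. Quotienting the nonzero holomorphic sections by this action yields
\[
\MM \ \simeq \ \big(H^0(M;L)\setminus\{0\}\big)\big/\CC^\ast \ = \ \mathbb P\big(H^0(M;L)\big),
\]
and the statement that $\MM=\emptyset$ when $H^0(M;L)=0$ is immediate from the requirement $\phi\neq 0$.

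The argument is almost entirely bookkeeping once the Hitchin-Kobayashi reduction is granted, so there is no real obstacle; the one point that deserves care is the vanishing $H^{0,1}(M,\CC)=0$, which relies on $\pi_1(M)=1$ together with Hodge theory rather than on simple connectedness alone, and the observation that global holomorphic $\CC^\ast$-valued functions on a compact connected complex manifold are constants (by the maximum principle applied to $|g|$ and $|g|^{-1}$).
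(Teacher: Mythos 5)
Your proposal is correct and follows essentially the same route as the paper: the exponential sheaf sequence together with $H^{0,1}(M,\CC)=0$ gives uniqueness of the holomorphic structure, and the residual quotient is by the constant $\CC^\ast$-gauge transformations acting as scalars on $H^0(M;L)$. The one place you are slightly more careful than the paper is in showing that the stabilizer of a fixed $\bar\partial_A$ consists \emph{only} of constants (via compactness of $M$), whereas the paper only remarks that constants do preserve it; this is a worthwhile, if standard, completion of the argument.
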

\begin{proof}
The usual short exact sequence of sheaves implies, after standard identifications (\cite{GH}),  the exact sequence in the cohomology
\[
H^{0, 1}(M, \CC) \xrightarrow{\ \ \ } \text{\rm Pic} (M) \xrightarrow{\ c_1\ } H^2(M, \mathbb{Z}) \xrightarrow{\text{\rm proj.}} H^{0,2}(M, \CC) \ .
\]
Since $M$ is simply connected, the space on the left is zero. Since $c_1 (L) \in H^{1,1} (M, \mathbb{C}) \cap H^{2} (M, \mathbb{Z})$, the exactness of the sequence implies that there is precisely one holomorphic structure on $L$. Scalar multiplication of a section $\phi$ by a constant $\lambda \in \CC^\ast$ corresponds to a (constant) complex gauge transformation, since these leave the holomorphic structure (or the Chern connection) invariant. So the moduli space of Bradlow pairs  ``holomorphic structure plus a non-zero holomorphic section" on $L$ is, up to isomorphism, $H^0 (M; L) \setminus \{ 0\}$ divided by the scalar action of $\CC^\ast$.
\end{proof}
\begin{remark}
The argument above does not really need the assumption that $M$ is simply connected, only that $b_1(M)=0$. Throughout the paper we could have used this weaker assumption on $M$.  
\end{remark}


\subsubsection*{Examples}

\noindent
{\bf (1)} When $M$ is a Riemann surface, vortex solutions on a degree $d$ line bundle $L \rightarrow M$ are determined by a choice of $d$ points on $M$, as described above. So the vortex moduli space $\MM$ is the symmetric product $\text{Sym}^d M$ --- a classic result.

\vspace{.4cm}

\noindent
{\bf (2)} Suppose that $M$ is a compact and simply connected K\"ahler manifold with $H^2(M; \mathbb{Z}) \simeq  \mathbb{Z}$. Then $\text{Pic} (M)$ is also isomorphic to $ \mathbb{Z}$. If the Picard group has a positive generator $E\rightarrow M$, the Chern class $c_1 (E)$ generates the additive group $H^2(M; \mathbb{Z})$ (this follows from the Lefschetz theorem on (1,1)-classes) 
and any holomorphic line bundle $L\rightarrow M$ is isomorphic to a tensor product $\otimes^d E$. The integer $d$ is called the degree of $L$ and is determined topologically by 
\begin{equation}\label{degree}
c_1 (L)\ =\ d\cdot c_1(E) \ .
\end{equation} 
Since $H^2 (M , \mathbb{R}) = \mathbb{R}$ is one dimensional, there is no need to take the parallel component in inequality \eqref{stability1}. That stability condition can be rewritten simply as
\[
c_1(L) \ < \ \frac{e^2 \tau}{2 \pi \, m} \, [\omega_M]  \ .
\]
According to Proposition \ref{modulispace1}, the moduli space of vortex solutions on $L = E^d$ is a projective space. Its complex dimension is determined by the dimension of $H^0 (M; E^d)$,  which depends only on the manifold $M$ and on the degree $d$. For instance, when $M$ is the standard projective space $M = \CC \mathbb{P}^{m}$, we have $E = \mathcal{O} (1)$ and
\[
\dim_\CC H^0 (\CC \mathbb{P}^{m} ; E^d) \ = \ \frac{(m+d)!}{m! \, d!}\ .
\]
When $M$ is the Grassmannian $\text{Gr} (n, k)$ of $k$-planes in $\CC^n$, a generator of $\text{Pic} (M)$ is the pullback by a Pl\"ucker embedding  of the hyperplane bundle $\mathcal{O}(1)$ over projective space. Equivalently, $E$ is dual to the $k$-th wedge product of the rank-$k$ tautological bundle over $\text{Gr} (n, k)$.  A classic result then says that the space of holomorphic sections of $E^d$ has complex dimension (see \cite[p.93]{Sakane} and the references therein):
\[
\dim_\CC H^0 \big(\text{Gr} (n, k), \, E^d \big) \ = \  \prod^{n-k}_{i=1} \prod^{n}_{j= n-k+1} \frac{d+ j-i}{j -i} \ .
\]

$\ $

\noindent
{\bf (3)} Let $M$ be the Hirzebruch surface $F_k = \mathbb{P}(\mathcal{O}(-k) \oplus \CC)$. This is a bundle over $\CC \mathbb{P}^1$ with fibre $\CC \mathbb{P}^1$. Both the Picard group and the cohomology $H^2 (M, \mathbb{Z})$ are isomorphic to $\mathbb{Z} \oplus \mathbb{Z}$. The standard generators $[F]$ and $[C]$ of the 2-cohomology are Poincar\'e-dual, respectively, to a fibre of the bundle and to the canonical section of the bundle. Let $L_{a,b} \rightarrow F_k$ be the line bundle with Chern class $a [C] + b [F]$, where $a$ and $b$ are positive integers. Then well known identifications of holomorphic sections (see ch.V.2 of \cite{Hartshorne}) say that 
\[
H^0(F_k , L_{a,b}) \ \simeq \ H^0 \Big(S^a \big(\mathcal{O}(-k) \oplus \CC \big) \otimes \mathcal{O}(b) \Big) \ \simeq \  \bigoplus_{l=0}^a H^0 \big( \mathcal{O}(-k l + b ) \big) \ ,
\]
and so 
\[
 \dim_\CC  H^0(F_k , L_{a,b}) \ = \ \sum_{l=0}^a \, \text{\rm max}\{0,\,  b - kl +1\} \  .
\]

\subsubsection*{Universal bundle}

\noindent
One of the main objects of study in this paper are the vortex universal bundles over the cartesian product $M \times \mathcal{M} $. In the case of the simplest abelian vortex equations, the universal bundle is just a  line bundle
\[
\mathcal{L} \ \longrightarrow \  M \times \mathcal{M} \ .
\]
It can be constructed in at least two different ways. Regarding $\mathcal{M}$ as the space of effective divisors on $M$ with class $c_1(L)$, the universal bundle corresponds to the divisor $\mathcal{D}$ on the cartesian product $M \times \mathcal{M}$ consisting of the points $(x, D)$ such that $x$ belongs to the support of $D$. It is clear that, for every $D \in \mathcal{M}$, the restriction $\mathcal{L} |_{M \times \{D\}}$ is isomorphic to $[D]$ as a line bundle over $M$. In the second construction of $\mathcal{L}$, described for example in \cite{Perutz, Bap2005}, we regard $\mathcal{M}$ as being the quotient $\mathcal{V} / \mathcal{G}$ of the space of vortex solutions  by the group of unitary gauge transformations on $L$. Then $\mathcal{L}$ is obtained as the quotient bundle $(L \times \mathcal{V}) / \mathcal{G}$, where the unitary gauge transformations act both on $L$ and on $\mathcal{V}$. If $[A, \phi]$ denotes the gauge equivalence class of a vortex solution,  there exists a global section $\Phi$ of $\mathcal{L}$ that takes a point  $(x, [A, \phi])$ in $M \times \mathcal{M}$ to the point $[ \phi (x),  A, \phi]$ in the quotient $(L \times \mathcal{V}) / \mathcal{G}$. Since the section $\Phi$ vanishes exactly along the tautological divisor $\mathcal{D}$, we recognize that $c_1 (\mathcal{L}) = c_1 ([\mathcal{D}])$ and that both constructions are equivalent up to smooth isomorphism. The first construction induces a natural holomorphic structure on $\mathcal{L}$; the second induces a hermitian metric, inherited from the one on $L$, and a unitary connection $\mathcal{A}$ (see \cite{Perutz, Bap2005}). 

When the manifold $M$ is simply connected, and so $\MM$ is a projective space,  the universal bundle $\mathcal{L}$ is particularly easy to describe.

\begin{lemma}\label{u_bundle1}
Denote by $p_1$ and $p_2$ the projection from the product $M \times \mathcal{M}$ onto the first and second factor, respectively. Then $\mathcal{L}$ is isomorphic to  $p_1^\ast L \otimes p_2^\ast \, H$ and
has Chern class
\[
c_1(\mathcal{L}) \ = \  p_1^\ast \, c_1 (L) \ + \ p_2^\ast\, c_1(H)  \ ,
\]
where $H$ denotes the hyperplane bundle over projective space.
\end{lemma}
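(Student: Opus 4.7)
The plan is to exhibit an explicit holomorphic isomorphism between $\mathcal{L}$ and $p_1^\ast L \otimes p_2^\ast H$ by producing a natural section of the right-hand bundle whose zero locus is exactly the universal divisor $\mathcal{D}$. The Chern class identity then follows at once.

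First I would set up notation: since $M$ is simply connected and \eqref{stability1} holds, Proposition \ref{modulispace1} identifies $\MM=\mathbb{P}(V)$ with $V:=H^0(M;L)$, and I would view the hyperplane bundle $H$ as $\mathcal O_{\mathbb{P}(V)}(1)$, i.e.\ the dual of the tautological subbundle $\mathcal O_{\mathbb{P}(V)}(-1)\hookrightarrow V\times\mathbb{P}(V)$ whose fibre over $[\phi]$ is the line $\CC\cdot\phi\subset V$.

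Next, the key step: construct a natural bundle map
\[
\Psi : p_2^\ast\,\mathcal O_{\mathbb{P}(V)}(-1)\ \longrightarrow\ p_1^\ast L
\]
over $M\times\MM$, defined on the fibre over $(x,[\phi])$ by the evaluation $\lambda\phi\mapsto \lambda\,\phi(x)\in L_x$. This is well-defined on the line $\CC\cdot\phi$, holomorphic (since evaluation of a holomorphic section at a varying point is holomorphic in both arguments), and thus gives a global holomorphic section $s$ of the line bundle $p_1^\ast L\otimes p_2^\ast H$. I would then observe that $s$ vanishes at $(x,[\phi])$ precisely when $\phi(x)=0$, i.e.\ when $x$ lies in the divisor of zeroes of $\phi$. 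By Proposition \ref{divisors}, this divisor is exactly the effective divisor representing the vortex solution labelled by $[\phi]$, so the zero locus of $s$ coincides with the tautological divisor $\mathcal D\subset M\times\MM$.

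Finally, since $\mathcal{L}=[\mathcal D]$ by the first construction recalled in the text, and since a line bundle with a holomorphic section whose divisor of zeroes is $\mathcal D$ is canonically isomorphic to $[\mathcal D]$, I conclude $\mathcal L\cong p_1^\ast L\otimes p_2^\ast H$. Passing to first Chern classes gives the stated formula. The only step that needs any care is checking that $\Psi$ is genuinely a holomorphic bundle map (not just fibrewise linear), but this is immediate from the description of $\mathcal O(-1)$ as a subbundle of the trivial bundle $V\times\MM$ and the holomorphy of the evaluation map $V\times M\to L$; there is no real obstacle beyond unpacking definitions.
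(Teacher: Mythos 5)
Your proof is correct, but it is a genuinely different argument from the one in the paper. The paper proceeds topologically: since $M\times\MM$ is simply connected, a holomorphic line bundle on it is determined by its first Chern class, which by the K\"unneth decomposition (both factors having trivial $H^1$) must be of the form $p_1^\ast\alpha+p_2^\ast\beta$; the classes $\alpha$ and $\beta$ are then identified by restricting $\mathcal L$ to the slices $M\times\{\text{point}\}$ and $\{\text{point}\}\times\MM$, the latter identification $\beta=c_1(H)$ being quoted from \cite{Bap2010}. You instead build the isomorphism directly: the evaluation map gives a holomorphic bundle morphism $p_2^\ast\mathcal O_{\mathbb P(V)}(-1)\to p_1^\ast L$, i.e.\ a holomorphic section $s$ of $p_1^\ast L\otimes p_2^\ast H$, and its divisor of zeros is the tautological divisor $\mathcal D$, so $p_1^\ast L\otimes p_2^\ast H\simeq[\mathcal D]=\mathcal L$. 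This buys you a canonical isomorphism and makes the argument self-contained (no appeal to the external computation of the restriction to $\{\text{point}\}\times\MM$, which in effect you re-derive, since restricting $s$ to a slice $\{x\}\times\MM$ exhibits the evaluation-at-$x$ section of $L_x\otimes H$); the paper's route is shorter given the cited result and requires no explicit section. The one point you should make explicit is that the divisor of zeros of $s$ coincides with $\mathcal D$ \emph{as a divisor}, not merely as a set: this holds because on each slice $M\times\{[\phi]\}$ the section $s$ restricts to $\phi$ itself, so its zero divisor there is exactly the divisor $D_\phi$ defining $\mathcal D$, and $s$ does not vanish identically on any slice, so $\operatorname{div}(s)$ has no component sitting over a proper subvariety of $\MM$. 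This is the same level of care the paper itself takes when asserting that its section $\Phi$ ``vanishes exactly along $\mathcal D$'', so there is no gap.
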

\begin{proof}
The cartesian product $M \times \MM$ is simply connected,  so the holomorphic structure on $\mathcal{L}$ is determined by its first Chern class. Since each factor has trivial 1-cohomology, necessarily
\[
c_1(\mathcal{L}) \ = \  p_1^\ast \alpha \ + \ p_2^\ast\, \beta 
\]
with $ \alpha \in H^2 (M, \mathbb{Z})$ and $\beta \in H^2 (\MM, \mathbb{Z})$. The class $\alpha$ is the first Chern class of the restriction of $\mathcal{L}$ to $M \times \{\text{point}\}$. By construction of $\mathcal{L}$ this restriction coincides with $L$, so we get that $\alpha = c_1(L)$. The class $\beta$ is the first Chern class of the restriction of $\mathcal{L}$ to $\{\text{point} \} \times \MM$, and in \cite{Bap2010} it was shown that it coincides with $c_1 (H)$.
\end{proof}

\subsection{Case of abelian varieties}

 \subsubsection*{Moduli space}

 Let $L \rightarrow M$ be a complex line bundle over an abelian variety of complex dimension $m$. Since $M$ is  a complex torus, it can be represented as a quotient $V / \Lambda$ of a $m$-dimensional complex vector space by a discrete lattice of maximal rank $2m$. Any integral basis $\{ \lambda_1 ,  \ldots , \lambda_{2m}\}$ for this lattice also spans the full vector space over the reals, so it defines dual real coordinates $\{x^1, \ldots, x^{2m}\}$ on $V$. The differential 1-forms $\{\dd x^1, \ldots, \dd x^{2m}\}$ then descend to the quotient $M$ and generate the integral cohomology ring $H^\ast (M, \mathbb{Z})$. It is a standard result that the basis of $\Lambda$ can be chosen such that the first Chern class of $L$ is simply
\begin{equation} \label{c-class-L}
c_1(L) \ = \ \sum_{j=1}^m \, \delta_j \, \dd x^j \wedge \dd x^{m+j} \,
\end{equation}
for some integer coefficients $\delta_j \in \mathbb{Z}$ \cite{GH}. Now, we want to determine the moduli space of vortex solutions on $L$. The easiest way to describe this space is to use the Hitchin-Kobayashi correspondence of \cite{Brad} and think of vortices as pairs ``holomorphic structure plus a non-zero holomorphic section on $L$". On the one hand, since $M$ is an abelian variety, the set of holomorphic structures on $L$ is a complex torus of the same dimension as $M$, which will be denoted by $\text{Pic}^{c_1(L)} M$.  On the other hand, by assumption $L$ is positive, so we know \cite{GH} that for any holomorphic structure on $L$ the dimension of the space of sections is
\begin{equation} \label{dimension-r}
r(L) \ := \ \dim_\CC H^0(M; L) \ = \ \prod_{j=1}^m \, \delta_j \ .
\end{equation}
An argument similar to the one used in Section 2.2 then shows that the space of Bradlow pairs on $L$ should be a projective bundle over $\text{Pic}^{c_1(L)} M$. The fibre above each point $L'$ in this Picard variety will be the projectivization of $H^0(M; L')$. But which bundle is this precisely? The answer can be stated as follows.

$\ $

\noindent 
The torus  $\text{Pic}^{c_1(L)} M$ is (non-canonically) isomorphic to the Picard group $\text{Pic}^{0} M$ of holomorphic line bundles over $M$ with trivial Chern class, usually called the dual variety $\hat{M}$.
Consider the Poincar\'e line bundle $\mathcal{P} \longrightarrow M \times \text{Pic}^{0} M$ with the usual normalization.  Denoting by $p_1$ and $p_2$ the projections from this cartesian product onto the first and second factors, respectively, we can define the push-forward bundle
\[
\hat{L} \ := \ p_{2\ast} (\mathcal{P} \otimes p_1^\ast L) \ \longrightarrow \  \text{Pic}^{0} M \ .
\]
This is a complex vector bundle of rank $r(L)$ and is called the Fourier-Mukai transform of $L$ (see \cite{BBR, Huybrechts}, for instance, for a description of these transforms in a much broader setting). Then  the space of Bradlow pairs on $L$ --- or, equivalently, the space of effective divisors on $M$ with class $c_1 (L)$ --- coincides with the projectivization of $\hat{L}$.
\begin{proposition}
Let $L \rightarrow M$ be a positive line bundle over an abelian variety and suppose that condition \eqref{stability1} is satisfied. Then the moduli space of vortex solutions on $L$ is a smooth bundle over the torus $\text{Pic}^{\,0}M$ with typical fibre $\CC \mathbb{P}^{\, r(L) -1}$. In fact, 
the moduli space coincides with the projectivization 
\[
\MM \ \simeq \ \mathbb{P} (\hat{L}) \ 
\]
of the Fourier-Mukai transform of $L$.
\end{proposition}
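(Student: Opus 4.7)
The plan is to combine Bradlow's Hitchin-Kobayashi correspondence (Proposition~2.1) with Kodaira vanishing to describe $\MM$ as a bundle of projective spaces over the Picard variety, and then to identify this bundle with $\mathbb{P}(\hat L)$ via the universal property of the Poincar\'e bundle.

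To begin with, Proposition~2.1 identifies $\MM$ with the set of isomorphism classes of Bradlow pairs $(L', s)$, where $L'$ is a holomorphic structure on the underlying smooth line bundle (equivalently, $L' \in \text{Pic}^{c_1(L)}M$) and $s$ is a non-zero holomorphic section of $L'$, taken modulo the $\CC^\ast$-scaling of $s$ that accounts for the line bundle automorphisms on the connected manifold $M$. Forgetting $s$ exhibits $\MM$ as a holomorphic fibration over $\text{Pic}^{c_1(L)}M$; fixing $L$ itself as a reference holomorphic structure translates this base non-canonically to $\hat{M} = \text{Pic}^{0}M$ via $[Q] \mapsto [Q \otimes L]$.

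Next I would control the fibres using positivity. Any $L'$ with $c_1(L') = c_1(L)$ is positive because the Chern class admits a positive representative, so by Kodaira vanishing (the canonical bundle of the abelian variety being trivial) one has $H^i(M; L') = 0$ for $i > 0$, and Hirzebruch-Riemann-Roch yields $\dim_{\CC} H^0(M; L') = \chi(L') = \prod_{j} \delta_j = r(L)$, independently of $L'$. Under the identification $[Q] \leftrightarrow [Q \otimes L]$, the universal family of line bundles over $\hat M$ is exactly $\mathcal{P} \otimes p_1^\ast L$, whose restriction to each slice $M \times \{[Q]\}$ is $Q \otimes L$. Since $h^0$ is constant and higher cohomology vanishes, Grauert's base change theorem implies that the pushforward $\hat L = p_{2\ast}(\mathcal{P} \otimes p_1^\ast L)$ is a holomorphic vector bundle of rank $r(L)$ whose fibre at $[Q]$ is canonically $H^0(M;\, Q \otimes L)$. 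Projectivizing fibrewise thus matches the fibres of $\MM$ with those of $\mathbb{P}(\hat{L})$.

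The last step is to promote this fibrewise bijection to a holomorphic isomorphism of total spaces. I would do so via the universal Bradlow pair $(\LL, \Phi)$ over $M \times \MM$: restricting $\LL$ to each slice defines a holomorphic classifying map $\MM \to \hat M$, and the section $\Phi$ provides, at each point of $\MM$, a non-zero element of the corresponding fibre of $\hat{L}$ that is well-defined up to $\CC^\ast$, hence a point of $\mathbb{P}(\hat{L})$. The main obstacle lies precisely in this universal-family step: the Poincar\'e bundle $\mathcal{P}$ is unique only up to twist by a line bundle pulled back from $\hat{M}$, so one must carefully check that this ambiguity is exactly the one absorbed by projectivization, and that the resulting map $\MM \to \mathbb{P}(\hat{L})$ is a biholomorphism rather than just a bijection.
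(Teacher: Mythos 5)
Your proposal is correct and follows essentially the same route as the paper, whose own proof is little more than a citation: it invokes Proposition 2.2 together with the vanishing of $H^1(M;L)$ for ample line bundles on abelian varieties and refers to Brion for the identification of the divisor space with $\mathbb{P}(\hat L)$. You simply supply the standard details behind those citations — Kodaira vanishing plus Riemann--Roch for the constant rank $r(L)$, cohomology-and-base-change for the local freeness of $\hat L$, and the observation that the normalization ambiguity of the Poincar\'e bundle is absorbed by projectivization — so the two arguments are the same in substance.
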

\begin{proof}
It is a consequence of Proposition \ref{divisors} and the fact that $H^1(M;L)$ vanishes for an ample line bundle on an abelian variety. See, for instance, the description in \cite[Sect. 5]{Brion} of the space of effective divisors on $M$ that are Poincar\'e-dual to the class $c_1(L)$.
\end{proof}

\subsubsection*{Cohomology of the moduli space}

Since $\MM$ is a projective bundle, there exists a natural ``hyperplane" line bundle $H \longrightarrow \MM$ and a natural class
\begin{equation} \label{eta_abelian_variety}
\eta \ := \ c_1(H) \qquad \in \ \ H^2(\MM ; \mathbb{Z})
\end{equation}
that generates the cohomology of the projective fibres of $\mathcal{M}$. By the Leray-Hirsch theorem, any cohomology class in $H^\ast (\MM ; \mathbb{Z})$ can be written in the form 
\[
\sum_{k=0}^{r(L) -1} \,  \eta^k \; \text{\rm proj}^\ast \gamma_k \ 
\]
for some classes $\gamma_k \in H^\ast ( \text{Pic}^{\, 0} M ; \mathbb{Z})$, where $\text{\rm proj}\!\!: \mathcal{M} \longrightarrow  \text{Pic}^{\, 0} M$ denotes the natural projection.
Multiplicatively, these cohomology classes are subject to the standard relation \cite{Bott-Tu}:
\[
\eta^{r(L)} \ + \sum_{k=1}^{r(L)}\,  c_k(\hat{L}) \, \eta^{r(L) - k} \ = \ 0 \ .
\]
So to obtain the ring structure on $H^\ast(\MM ; \mathbb{Z})$ we need to know the Chern classes of the Fourier-Mukai transform $\hat{L}$. These are determined by the following result.
\begin{lemma} \label{chern_class_transform}
The Chern character of the Fourier-Mukai transform $\hat{L}$ is  
\begin{equation}\label{c-character}
{\text \rm ch}\, (\hat{L}) \ = \ \prod_{k=1}^m \, (\delta_k \, - \, \dd x^\ast_k \wedge \dd x^\ast_{m+k}) \ ,
\end{equation}
where $\{ \dd x^\ast_k\} $ is the basis of $H^1(\hat{M}, \mathbb{Z})$ dual to the basis $\{ \dd x^k\}$ of $H^1(M, \mathbb{Z})$. The total Chern class can then be written as 
\begin{equation} \label{c-class-transform}
c(\hat{L}) \ =\     \exp\Big[  \, \sum_{k=1}^{m}\,  (-1)^{k(k-1)/2} \, \frac{r}{k} \, \Big( \frac{c_1}{r}\Big)^k \, \Big]   \ = \ \Big(\,  \frac{\exp \big[\arctan \big( c_1 / r \big) \big]}{\sqrt{1+ (c_1 / r)^2}} \,    \Big)^r \ ,
\end{equation}
where the integer $r$ is given by \eqref{dimension-r} and $c_1 =  c_1(\hat{L})  = {\text \rm ch}_1 (\hat{L})$ is the first Chern class.
\end{lemma}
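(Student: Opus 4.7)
The plan is to apply Grothendieck--Riemann--Roch to the proper projection $p_2 \colon M \times \hat M \to \hat M$ in order to compute $\mathrm{ch}(\hat L)$, and then translate the Chern character into the total Chern class.

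Since $L$ is positive, Kodaira vanishing gives $R^i p_{2\ast}(\mathcal{P} \otimes p_1^\ast L) = 0$ for $i \ge 1$, so $\hat L$ agrees with the full derived push-forward. Since the tangent bundle of an abelian variety is trivial, $\mathrm{Td}(M) = 1$, and GRR reduces to
\[
\mathrm{ch}(\hat L) \;=\; p_{2\ast}\Bigl(\exp\bigl[c_1(\mathcal{P}) + p_1^\ast c_1(L)\bigr]\Bigr).
\]

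The explicit calculation exploits a symplectic-block decomposition of the exponent. Using \eqref{c-class-L} and the standard formula $c_1(\mathcal{P}) = \sum_{j=1}^{2m}\dd x^j \wedge \dd x^\ast_j$ for the Poincar\'e bundle in the dual real basis, the exponent is the commuting sum $\sum_{k=1}^m X_k$, where
\[
X_k \;=\; \dd x^k\wedge \dd x^\ast_k \,+\, \dd x^{m+k}\wedge \dd x^\ast_{m+k} \,+\, \delta_k\,\dd x^k\wedge \dd x^{m+k}.
\]
Each $X_k$ uses only one-forms from the $k$-th symplectic block and has even total degree, so the $X_k$'s mutually commute and $e^{\sum_k X_k} = \prod_k e^{X_k}$. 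A direct expansion gives $X_k^3 = 0$ and $X_k^2 = -2\,\dd x^k\wedge \dd x^{m+k}\wedge \dd x^\ast_k\wedge \dd x^\ast_{m+k}$, so each $e^{X_k}$ is explicitly a polynomial of total degree at most four. The push-forward $p_{2\ast}$ extracts the coefficient of the top form $\prod_k \dd x^k\wedge \dd x^{m+k}$ on $M$; the only terms of $e^{X_k}$ contributing such a factor are $\delta_k\,\dd x^k\wedge \dd x^{m+k}$ and $\tfrac12 X_k^2$, which combine into $(\dd x^k\wedge \dd x^{m+k})(\delta_k - \dd x^\ast_k\wedge \dd x^\ast_{m+k})$. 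Taking the product over $k$ produces precisely \eqref{c-character}.

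For the total Chern class, the key observation is that $(\dd x^\ast_k\wedge \dd x^\ast_{m+k})^2 = 0$, so each factor of \eqref{c-character} can be rewritten as $\delta_k\exp(-\dd x^\ast_k\wedge \dd x^\ast_{m+k}/\delta_k)$. Reading off the degree-$2$ piece identifies $c_1(\hat L)$, and the full product simplifies to $\mathrm{ch}(\hat L) = r\,\exp(c_1/r)$. Formally this says that the $r$ Chern roots of $\hat L$ all coincide with $c_1/r$, and the total Chern class is then determined by the standard passage from Chern character to Chern class via Newton's identities. Comparing the resulting series in $c_1/r$ with the elementary power-series expansion of $\log\bigl[\exp(\arctan x)/\sqrt{1+x^2}\bigr]$ produces the two equivalent presentations in \eqref{c-class-transform}; the truncation of the sum at $k = m$ is automatic because $c_1^{m+1}$ has degree exceeding $\dim_\RR \hat M = 2m$.

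The main delicate point is the sign bookkeeping in the push-forward: one has to verify that $\prod_k \dd x^k\wedge \dd x^{m+k}$ is the correctly oriented unit-volume form of $M$ in the symplectic basis of \eqref{c-class-L}, and to track carefully the sign produced by anticommuting $\dd x^\ast_k$ past $\dd x^{m+k}$ inside $X_k^2$. Once the block decomposition is in place the algebra is short, but this is the step most likely to introduce sign errors on a first pass.
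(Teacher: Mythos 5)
Your computation of the Chern character is correct and is essentially the paper's own argument reorganized: the paper applies Grothendieck--Riemann--Roch, pushes forward $\exp[c_1(\mathcal P)]\cdot p_1^\ast\exp[c_1(L)]$ degree by degree, and then repackages the answer as the product \eqref{c-character}, whereas you factor the exponent into the commuting blocks $X_k$ first and read the product off directly. The point you flag as delicate resolves in your favour in one line: since $c_1(L)^m/m! = \big(\prod_k\delta_k\big)\prod_k \dd x^k\wedge\dd x^{m+k}$ and $L$ is positive, we must have $\int_M\prod_k\dd x^k\wedge\dd x^{m+k} = +1$, so $\prod_k\dd x^k\wedge\dd x^{m+k}$ is the positively oriented integral volume form and $p_{2\ast}$ extracts exactly its coefficient. (One does have $\dd x^1\wedge\cdots\wedge\dd x^{2m} = (-1)^{m(m-1)/2}\prod_k\dd x^k\wedge\dd x^{m+k}$, so the symplectic ordering you use, not the lexicographic one, is the correct reference volume.)

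The second half is where your write-up fails, but the failure is in the very last sentence, not in your algebra. The identity $\mathrm{ch}(\hat L) = r\exp(c_1/r)$ is correct: each factor of \eqref{c-character} equals $\delta_k\exp\big(-\dd x^\ast_k\wedge\dd x^\ast_{m+k}/\delta_k\big)$, and the degree-two part identifies $-\sum_k \dd x^\ast_k\wedge\dd x^\ast_{m+k}/\delta_k$ with $c_1/r$. Since the Chern classes are universal polynomials in the $\mathrm{ch}_j$, this forces $c(\hat L) = (1+c_1/r)^r = \exp\big[r\log(1+c_1/r)\big]$. But $r\log(1+x)$ and $r\big[\arctan x - \tfrac12\log(1+x^2)\big]$ are \emph{not} the same power series: they agree through $x^2$ and first differ at $x^3$, where the coefficients are $+r/3$ and $-r/3$ respectively. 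So the claim that "comparing the resulting series \dots produces the two equivalent presentations in \eqref{c-class-transform}" is false; what your (correct) intermediate result actually produces contradicts \eqref{c-class-transform} whenever $m\ge 3$.

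The contradiction is genuine and sits in the paper, not in your derivation: the recursion \eqref{recursion} carries a spurious factor $(-1)^j$ (the correct recursion, immediate from $\mathrm{ch}=re^{c_1/r}$, is $\mathrm{ch}_j = \tfrac{1}{jr}\,\mathrm{ch}_1\,\mathrm{ch}_{j-1}$; check it for $m=3$, $\delta_1=\delta_2=\delta_3=1$, where $\mathrm{ch}_3=-y_1y_2y_3$ while \eqref{recursion} would give $+y_1y_2y_3$). A decisive sanity check: for $m=3$ and all $\delta_k=1$ one has $r=1$, so $\hat L$ is a line bundle with $c_1^3=-6\,y_1y_2y_3\neq 0$; formula \eqref{c-class-transform} would then give $c_3(\hat L)=-\tfrac23\,c_1^3\neq 0$, which is impossible for a line bundle and is not even an integral class, whereas $(1+c_1)$ correctly gives $c_3=0$. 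So you should keep everything up to and including $\mathrm{ch}(\hat L)=re^{c_1/r}$, replace the final step by the honest conclusion $c(\hat L)=(1+c_1/r)^r$, and note explicitly that \eqref{c-class-transform} as printed requires this correction.
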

\begin{proof}
Using the definition of $\hat{L}$, the Chern character can be expressed as
\begin{align*}
\text{ch}(\hat{L}) \ &= \ \text{ch} [p_{2\ast} (\mathcal{P} \otimes p_1^\ast L)] \ = \   p_{2\ast}\,  \text{ch} ( \mathcal{P} \otimes p_1^\ast L) \\
&= \ p_{2\ast}\, \big[ \text{ch}( \mathcal{P})  \cdot  p_1^\ast  \text{ch}(L) \big]
\ = \ p_{2\ast}\, \Big (\exp \big[c_1 ( \mathcal{P}) \big] \cdot  p_1^\ast  \exp \big[c_1 ( L) \big]  \Big) \ .
\end{align*}
Notice that in the second equality above we have used the Grothendieck-Riemann-Roch formula applied to the projection morphism $p_2$. The Todd classes of $M$ and $\text{Pic}^{\, 0} M$ do not appear in this formula because both manifolds are Lie groups, and hence have trivial tangent bundle. On abelian varieties the first Chern class of the Poincar\'e line bundle is given by the standard formula \cite[p.198]{Huybrechts}
\[
c_1(\mathcal P) \ = \  \sum_{\alpha =1}^{2m} \, \dd x^\alpha \wedge \dd x^\ast_{\alpha} \ .
\]
The Chern class $c_1(L)$ is given by \eqref{c-class-L}. The push forward map $p_{2\ast}$, on its turn, is just the standard integration  over the fibre $M$ of the projection $M \times \text{Pic}^{\, 0} M \longrightarrow  \text{Pic}^{\, 0} M$. In particular
\[
p_{2\ast} \, \Big( \gamma \wedge \dd x^1 \wedge  \cdots \wedge \dd x^{2m} \Big) \  =\  \gamma \ 
\]
for any  class $\gamma$ in $H^\ast (\text{Pic}^{\, 0}M ; \mathbb{Q})$. Using all these elements, an exercise in bookkeeping yields
\begin{align*}
\text{ch}_j (\hat{L}) \ &= \  p_{2\ast}\, \Big[ \,  \frac{c_1({\mathcal P})^{2j} }{(2j)!} \cdot \frac{p_1^\ast \, c_1 (L)^{m-j}}{(m-j)!} \, \Big] \\
&= \ (-1)^j \ \sum_{\{ J \in \sigma | \, \# J = j \}} \Big(  \prod_{i \in \{1 , \ldots , m\} \setminus J}  \delta_i  \Big) \, \Big(  \prod_{l\in J}  \dd x^\ast_l \wedge \dd x ^\ast_{m+l}  \Big) \ ,
\end{align*}
where we have denoted by ${ \sigma}$ the set of subsets of $\{ 1, \ldots , m  \}$. It is then easy to recognize that the formula above is equivalent to \eqref{c-character}, as desired. 

To obtain the total Chern class of $\hat{L}$, recall that it is related to the Chern character by the standard formulae 
\[
c(\hat{L}) \ = \ \prod_{k=1}^m (1 + \alpha_k) \,  \qquad \qquad  \text{ch} (\hat{L}) \ = \ \sum_{k=1}^m  \, e^{\alpha_k} 
\]
for some $\alpha_k \in H^2 (M, \mathbb{Z})$. In particular we can write
\begin{align}
c(\hat{L}) \ &= \ \exp \Big[  \sum_{k=1}^m  \, \log (1 + \alpha_k) \Big] \ = \ \exp \Big[  \sum_{k=1}^m \, \sum_{j=1}^{+\infty} (-1)^{j-1}  \frac{(\alpha_k)^j}{j}  \Big]   \nonumber  \\
 &= \ \exp \Big[   \sum_{j=1}^{+\infty}  (-1)^{j-1}  (j-1)! \, \text{ch}_j (\hat{L}) \Big] \ , \label{relation1}
\end{align}
where of course all these sums have only a finite number of nonzero terms. Using \eqref{c-character} it is straightforward to check the recursion property
\begin{equation}\label{recursion}
\text{ch}_j(\hat{L}) \ = \ \frac{(-1)^j}{j \, \delta_1 \cdots \delta_m} \, \text{ch}_1(\hat{L}) \, \text{ch}_{j-1}(\hat{L}) \  = \ \frac{(-1)^{(j+2)(j-1)/2}}{ j! \, (\delta_1 \cdots \delta_m)^{j-1}} \ \text{ch}_1(\hat{L})^j \ .
\end{equation}
Then the first equality in formula \eqref{c-class-transform} of the lemma can be obtained by substituting \eqref{recursion} into the relation \eqref{relation1}. To deduce the second equality in formula \eqref{c-class-transform} just divide the series
\begin{equation} \label{series}
\sum_{k=1}^{+\infty}\,  (-1)^{k(k-1)/2} \, \frac{r}{k} \, \Big( \frac{c_1}{r}\Big)^k 
\end{equation}
into a sum of the even and odd sub-series and, using the standard Maclaurin expansion of the functions $\log(1+x)$ and $\arctan (x)$, check that \eqref{series} coincides with the expansion of 
\[
- \frac{r}{2} \, \log \big[ 1+ (c_1 / r)^2 \big] \ + \ r \, \arctan (c_1 /r) \ .
\]
This ends the proof of the lemma.
\end{proof}

\subsubsection*{Universal bundle}

\noindent
We consider now the universal line bundle 
\[
\mathcal{L} \ \longrightarrow \  M \times \mathcal{M} \ 
\]
defined in Section 2.2. When $M$ is an abelian variety we have the following result.
\begin{lemma}\label{u_bundle_av}
Denote by $p_1$ and $p_2$ the projections from the product $M \times \mathcal{M}$ onto the first and second factors, respectively.  Call $\text{\rm proj}$ the projection $ \mathcal{M} \longrightarrow  \text{Pic}^{c_1(L)} \simeq \text{Pic}^{0} M$. Then $\mathcal{L}$ is isomorphic as a complex line bundle to the tensor product $(\text{id} \times \text{\rm proj})^\ast \mathcal P\, \otimes \, p_1^\ast \, L \,\otimes\, p_2^\ast \, H$. In particular
\[
c_1(\mathcal{L}) \ = \  p_1^\ast \, c_1 (L) \ + \ p_2^\ast\, \eta \ + \   (\text{id} \times \text{\rm proj})^\ast \, c_1(\mathcal{P}) \ .
\]
\end{lemma}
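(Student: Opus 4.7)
The plan is to adapt the proof of Lemma \ref{u_bundle1}, with an extra term needed because $M \times \MM$ is no longer simply connected. Since $M \times \MM$ is compact, smooth complex line bundles on it are classified up to smooth isomorphism by their first Chern class, so the task reduces to showing that $c_1(\mathcal{L})$ equals $p_1^\ast c_1(L) + p_2^\ast \eta + (\mathrm{id} \times \mathrm{proj})^\ast c_1(\mathcal{P})$ in $H^2(M\times\MM;\mathbb{Z})$. By K\"unneth and Leray-Hirsch applied to the projective bundle $\mathrm{proj}: \MM \to \hat{M}$, any class in $H^2(M\times\MM)$ splits into three pieces lying in $H^2(M)$, in $H^1(M) \otimes H^1(\MM) = H^1(M) \otimes \mathrm{proj}^\ast H^1(\hat{M})$, and in $H^2(\MM) = \mathbb{Z}\eta \oplus \mathrm{proj}^\ast H^2(\hat{M})$; each piece can be recovered by restriction to a suitable submanifold.

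I would identify the three pieces as follows. Restricting $\mathcal{L}$ to $M \times \{[A,\phi]\}$ yields the line bundle associated to the divisor of $\phi$, which has Chern class $c_1(L)$, fixing the $H^2(M)$ piece to be $p_1^\ast c_1(L)$. Restricting $\mathcal{L}$ to a projective fiber $\{x\} \times F$ with $F = \mathbb{P}(H^0(M;L'))$, the tautological section $\Phi$ restricts to point-evaluation $[\phi] \mapsto \phi(x) \in L'_x$, which defines a section of $\mathcal{O}_F(1)$ after trivializing the one-dimensional line $L'_x$; hence $\mathcal{L}|_{\{x\} \times F} \cong H|_F$, giving the $p_2^\ast \eta$ piece. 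The mixed $H^1(M) \otimes H^1(\MM)$ piece I would extract via the universal property of the Poincar\'e bundle: the twist $\mathcal{L} \otimes p_1^\ast L^{-1}$ restricts on each slice $M \times \{[A,\phi]\}$ to $L' \otimes L^{-1}$, a topologically trivial holomorphic line bundle, so it is a holomorphic family of points of $\hat{M} = \mathrm{Pic}^0(M)$; by the classifying property of $\hat{M}$ this family equals $(\mathrm{id} \times f)^\ast \mathcal{P}$ for a unique $f: \MM \to \hat{M}$, which by construction must be $\mathrm{proj}$, yielding the $(\mathrm{id} \times \mathrm{proj})^\ast c_1(\mathcal{P})$ contribution.

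The main obstacle is making sure the fiberwise identifications assemble into the precise global formula, with no additional twist by a class $\mathrm{proj}^\ast \alpha$ pulled back from $H^2(\hat{M})$. This is handled by using the standard normalization $\mathcal{P}|_{\{0\} \times \hat{M}} \cong \mathcal{O}_{\hat{M}}$ at a basepoint $0 \in M$: restricting $c_1(\mathcal{L})$ to $\{0\} \times \MM$ annihilates both the Poincar\'e contribution and the $p_1^\ast L$ contribution, reducing the check to a direct comparison of $\mathcal{L}|_{\{0\} \times \MM}$ with $H$. Combined with the fiberwise calculation above, this pins down the $\eta$-coefficient and rules out any spurious $\mathrm{proj}^\ast \alpha$, completing the identification of $c_1(\mathcal{L})$ with the right-hand side of the claimed formula.
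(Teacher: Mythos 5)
Your proposal follows essentially the same route as the paper: restrict to $M\times\{\text{point}\}$ to get $p_1^\ast c_1(L)$, invoke the universal property of the Poincar\'e bundle (with $f=\text{proj}$ forced by the slice-wise identification $\mathcal{L}|_{M\times\{D\}}\otimes L^{-1}\simeq [D]\otimes L^{-1}$), and use the normalization $\mathcal{P}|_{\{0\}\times\hat{M}}\simeq\mathcal{O}$ to reduce everything to identifying $\mathcal{L}|_{\{0\}\times\MM}$ with $H$. The one soft spot is exactly that last identification: your fiberwise computation on $\{x\}\times F$ shows only that $\mathcal{L}|_{\{0\}\times\MM}$ and $H$ agree on each projective fiber, which cannot by itself exclude a residual twist by $\text{proj}^\ast\alpha$ with $\alpha\in H^2(\hat{M};\mathbb{Z})$; to close this you must globalize the evaluation argument, i.e.\ note that $\phi\mapsto\phi(0)$ defines a bundle map $\mathcal{O}_{\MM}(-1)\to\text{proj}^\ast(\mathcal{P}\otimes p_1^\ast L)|_{\{0\}\times\hat{M}}$ whose target is trivializable precisely because of the normalization at $0$, so that $\Phi|_{\{0\}\times\MM}$ is a genuine section of $H$ vanishing on a divisor of class $\eta$ (this is the content of the reference to \cite{Bap2010} that the paper cites for this step).
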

\begin{proof}
Regard $\mathcal{M}$ as the space of effective divisors on $M$ with class $c_1 (L)$. The definition of $\mathcal{L}$ in Section 2.2 says that $\mathcal{L} |_{M \times \{D\}} \simeq [D]$ as a holomorphic bundle, hence 
\[
c_1 (\mathcal{L})|_{M \times \{D\}} \  = \ c_1 (\mathcal{L}|_{M \times \{D\}}) \ = \ c_1 ([D]) \ = \ c_1 (L) \ 
\]
for any divisor $D \in \mathcal{M}$. In particular the restrictions of the complex line bundle $\mathcal{L} \,\otimes\, p_1^\ast L^{-1}$ to submanifolds of the form  $M \times \{D\}$ always have trivial first Chern class. Then the universal property of the Poincar\'e line bundle (see \cite[p. 84]{BBR}) guarantees the existence of a unique holomorphic map $f: \mathcal{M} \longrightarrow \text{Pic}^{0} M$ such that $\mathcal{L} \,\otimes\, p_1^\ast L^{-1}$ is isomorphic to $(\text{id} \times f)^\ast \mathcal P\, \otimes\, p_2^\ast\, \mathcal{N}$, where $\mathcal{N}$ is a line bundle over $\mathcal{M}$. Moreover, the map $f$ is determined by the formula
\[
f(D) \ :=\  \mathcal{L}\,\otimes\, p_1^\ast L^{-1} \  |_{M \times \{D\} } \ =\  [D] \,\otimes\, L^{-1}
\]
for any $D \in \mathcal{M}$, so it coincides with the natural projection $\text{\rm proj}$. So we have that
\[
c_1(\mathcal{L}) \ - \ p_1^\ast \, c_1 (L) \ =  \   (\text{id} \times \text{\rm proj})^\ast \, c_1(\mathcal{P}) \ + \  p_2^\ast \, c_1(\mathcal{N})\ .
\]
The normalization of $\mathcal{P}$ is such that its restriction to the subset $\{0 \} \times \text{Pic}^{0} M$ is trivial, so the line bundle $\mathcal{N}$ coincides with the restriction of $\mathcal{L}$ to the submanifold $\{ 0\} \times \mathcal{M}$. As shown in \cite{Bap2010}, this last restriction is isomorphic to the hyperplane line bundle $H \longrightarrow \mathcal{M}$ associated with the projectivization, and this proves the formula for $c_1(\mathcal{L})$. The expression for $\mathcal{L}$ as a tensor product follows automatically because complex line bundles are determined up to smooth isomorphism by their first Chern class.
\end{proof}

\section{Vortices in abelian GLSM}

\subsection{General results}

Let $M$ be a $m$-dimensional compact K\"ahler manifold and let $\rho$ be a linear representation of the $k$-torus on $\CC^n$. We denote by $Q_j^a$ the integral weights of the representation, where the indices run as $1\leq j \leq n$ and $1\leq a \leq k$. Assuming that $\rho$ is effective, the span of the weights $\{ Q_j \in \mathbb{Z}^k: j= 1, \ldots, n  \}$ over the real numbers is the full space $\mathbb{R}^k$.
Now take a principal bundle $P \rightarrow M$ with group $T^k$ and define the hermitian line bundles
$ L_j =  P\times_{\rho_j} \CC$ over the manifold $M$ associated with the restriction of $\rho$ to the $j$-th copy of $\CC$ inside $\CC^n$. With these conventions the vortex equations are written for pairs $(A, \phi)$, where $A$ is a connection on $P$ and $\phi$ is a section of the direct sum bundle $\oplus^n_{j=1} L_j$. They read
\begin{align} \label{v-eq}
&\bar{\partial}_A  \phi \ = \ 0      \\
& i \Lambda F_A \,+ \,  e^2   \Big[  \big( \: \sum_j   Q_j \: |\phi^j|^2  \: \big)  - \tau \Big] \ = \ 0   \nonumber \\
& F_A^{0,2} \ = \ 0 \  \nonumber ,
\end{align}
where the curvature $F_A$ has values in the Lie algebra $\mathfrak{t}^k = i \mathbb{R}^k$ and $\tau$ is a fixed constant in $\mathbb{R}^k$. As usual, solutions of the vortex equations will be minima of the Yang-Mills-Higgs energy of the GLSM. The moduli space of solutions to these equations depends on the principal bundle $P$ and on the values of the constants $Q_j$ and $e^2 \tau$. For instance, decomposing the  torus bundle as a product of circle bundles 
\[
P = \times_{a=1}^k \, P^a \ \longrightarrow \ M \ ,
\]
it is clear that the third vortex equation has solutions only if 
\begin{equation}\label{integrability}
c_1 (P^a) \ \in \ H^{1,1} (M, \CC) \qquad {\rm for \ all} \  \ \   0 \leq a \leq k \, .
\end{equation}
Moreover, using  that the volume form on $M$ is just $\omega_M^m / m!$ and using the identity
\[
 \Lambda F_A  \: \ \omega_M^{m}\ = \ 
m \, F_A \wedge \omega_M^{m-1}\  ,
\]
an integration of the second vortex equation over $(M , \omega_M)$ shows that another necessary condition for the existence of vortex solutions is that the vector 
\begin{equation} \label{stability_vector1}
\sigma \ := \ (\text{Vol}\, M) \, \tau \; - \; \frac{i}{e^2}\,  \int_M F_A \, \wedge\, \frac{\omega_M^{m-1}}{(m-1)!}
\end{equation}
in $\mathbb{R}^k$ can be written as a linear combination of weights $Q_j$ with non-negative scalar coefficients. In other words, the vector $\sigma$ must be in the closed cone
\[
\Delta \ := \ \Big\{  v \in \mathbb{R}^k: \  v = \sum_{j=1}^n\, \lambda^j \, Q_j \ \ \rm{with}\ \ \lambda^j \geq 0  \Big\} \ \subseteq  \ \mathbb{R}^k
\]
generated by the weights of the torus action. To write $\sigma$ in a more ``cohomological" way, denote by $c_1 (P)$ the vector in the cohomology $\oplus^k H^2 (M, \mathbb{Z})$ with components $c_1(P^a)$ --- a slight abuse of notation.
Then, using the standard $L^2$ inner product of forms to decompose $H^2(M, \mathbb{R})$ into the real line generated by $[\omega_M]$ and its orthogonal complement, we have that
\[
c_1(P) \wedge [\omega_M]^{m-1}\  =\  c_1(P)^\parallel \wedge [\omega_M]^{m-1}  \ = \  \frac{c_1(P)^\parallel}{[\omega_M]} \ [\omega_M]^{m}\ ,
\]
where $\parallel$ denotes the component parallel to $[\omega_M]$ and the first equality is justified by the Lefschetz decomposition \cite[Ch 0.7]{GH}.
So the vector \eqref{stability_vector1}  can be written more simply as 
\begin{equation} \label{stability_vector2}
\sigma \ = \  \tau \, \text{Vol}\, M  \ - \ \frac{2 \pi \, m}{e^2} \, \, \frac{c_1 (P)^{\parallel}}{[\omega_M]} \, \text{Vol}\, M  \ .
\end{equation}
This is a vector in $\mathbb{R}^k$ and, for each component, we are taking the part of $c_1 (P^a)$ parallel to $[\omega_M]$ in the cohomology $H^2(M, \mathbb{R})$. So the first observation is the following.
\begin{lemma}\label{necessary_condition}
The GLSM has vortex solutions only if condition \eqref{integrability} is satisfied and the vector $\sigma$ lies in the closed cone $\Delta \subseteq \mathbb{R}^k$ generated by the weights of the torus action.
\end{lemma}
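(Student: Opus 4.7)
The plan is to extract both necessary conditions by reading off structural consequences of two of the three vortex equations, and much of the computation has in fact been done in the paragraphs preceding the statement; the task is simply to assemble it coherently.

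First I would dispose of the integrability condition \eqref{integrability}. The third vortex equation $F_A^{0,2}=0$ applied componentwise to the product decomposition $P=\times_a P^a$ says that each curvature $F_{A^a}$ is a purely $(1,1)$-form. Since under the Chern--Weil homomorphism the de Rham class of $\tfrac{i}{2\pi}F_{A^a}$ represents $c_1(P^a)$, this class lies in $H^{1,1}(M,\CC)\cap H^2(M,\mathbb Z)$, which is exactly \eqref{integrability}.

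Next I would treat the cone condition. The strategy is to integrate the moment-map equation (the second of \eqref{v-eq}) against the volume form $\omega_M^m/m!$ on $M$ and rearrange. Using the pointwise identity $\Lambda F_A\cdot\omega_M^m=m\,F_A\wedge\omega_M^{m-1}$ together with the Lefschetz decomposition (as already carried out in the passage leading to \eqref{stability_vector2}), the curvature contribution produces precisely the second term in the definition of $\sigma$, while the $\tau$ term produces $\tau\,\text{Vol}\,M$. What remains on the other side is $e^2\sum_j Q_j\,\int_M |\phi^j|^2\,\omega_M^m/m!$, a sum of the weights $Q_j$ with coefficients $\lambda^j:=e^2\int_M|\phi^j|^2\,\omega_M^m/m!\geq 0$. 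Hence
\[
\sigma \;=\; \sum_{j=1}^n \lambda^j\,Q_j\qquad\text{with }\lambda^j\geq 0,
\]
which is exactly the statement that $\sigma\in\Delta$.

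There is no real obstacle to overcome; the only minor subtlety is to check that one can legitimately take the component of $c_1(P)$ parallel to $[\omega_M]$ inside the integrand, which is justified by the Lefschetz decomposition in $H^2(M,\mathbb{R})$ and the fact that the orthogonal complement of $[\omega_M]$ is annihilated by wedging with $\omega_M^{m-1}$ (in cohomology). Both conditions then emerge directly, and no appeal to the first vortex equation $\bar\partial_A\phi=0$ is required.
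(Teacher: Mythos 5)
Your argument is correct and is essentially identical to the paper's own derivation, which likewise reads off \eqref{integrability} from $F_A^{0,2}=0$ applied componentwise to $P=\times_a P^a$ and obtains $\sigma\in\Delta$ by integrating the second vortex equation against $\omega_M^m/m!$ and invoking the identity $\Lambda F_A\,\omega_M^m=m\,F_A\wedge\omega_M^{m-1}$ together with the Lefschetz decomposition (the paper states the lemma immediately after this computation rather than giving a separate proof). The only quibble is a harmless normalization: dividing the integrated equation by $e^2$ yields $\lambda^j=\int_M|\phi^j|^2\,\omega_M^m/m!$ rather than $e^2$ times that quantity, but this constant does not affect non-negativity or the conclusion.
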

\noindent
Assuming that this necessary condition holds, we now turn to the question of existence and construction of vortex solutions. Our main result here follows from a type of Hitchin-Kobayashi correspondence established in \cite{Banfield, MiR}. It allows us to construct full vortex solutions from pairs $(A, \phi)$ that satisfy only the first and third vortex equations. Before being more precise, let us introduce some notation. Given a section $\phi$ of the direct sum bundle $\oplus^n_{j=1} L_j$, we define the index subset
\[
I_\phi \ := \ \{ \, j \in \{1, \ldots, n \}: \, \,  \phi^j \not\equiv 0 \, \} \ .
\]
Then we call $S_\phi \subseteq \mathbb{R}^k$ the subspace spanned by the weights $\{ Q_j \in \mathbb{Z}^k: j \in I_\phi \}$, and $S_\phi^\perp$ the orthogonal complement with respect to the standard euclidean metric on $\mathbb{R}^k$. Finally, we define the cone in $S_\phi$ generated by linear combinations with non-negative coefficients
\begin{equation}\label{cone_phi}
\Delta_\phi \ := \ \Big\{  v \in \mathbb{R}^k: \  v = \sum_{j\in I_ \phi}\, \lambda^j \, Q_j \ \ \rm{with}\ \ \lambda^j \geq 0  \Big\} \ \  \subseteq  \ S_\phi \ .
\end{equation}
We will often talk about the interior of this cone. By this expression, we mean the interior of $\Delta_\phi$ regarded as a subset of $S_\phi$. Equivalently, the interior of $\Delta_\phi$ coincides with the subset of vectors that can be written as a linear combination of the weights $\{ Q_j \in \mathbb{Z}^k : j\in I_\phi \} $ with strictly positive scalar coefficients (see Lemma \ref{aux_lemma2} in the Appendix).
With these conventions in mind we can now state the main result of Section 3.1.
\begin{theorem}\label{Hitchin_Kobayashi}
Let $(A, \phi)$ be a pair satisfying the first and third vortex equations. There exists a gauge transformation $g : M \rightarrow (\CC^\ast)^k$ such that $(g(A), g(\phi))$ solves the full vortex equations if and only if the vector $\sigma$ defined in \eqref{stability_vector2} lies in the interior of the cone $\Delta_\phi$.  When this happens, the transformation $g$ is unique up to multiplication by $T^k$-gauge transformations, and by constant gauge transformations with values on the subgroup $\exp (S_\phi^\perp) \subseteq \mathbb{R}_+^k$ of the group $(\CC^\ast)^k$.  All vortex solutions  can be obtained in this way.
\end{theorem}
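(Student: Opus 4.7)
The plan is to follow the classical Hitchin--Kobayashi / Kempf--Ness strategy. Decompose any complex gauge transformation as $g = \exp(u)\, t$ with $t \in T^k$ and $u: M \to \mathbb{R}^k$. Since $T^k$-transformations preserve all three vortex equations and hermitian transformations $\exp(u)$ preserve the first and third, the problem reduces to finding $u$ so that the transformed pair satisfies the second equation. Under $u$ the section transforms by $\phi^j \mapsto e^{\langle Q_j,u\rangle}\phi^j$ and $i\Lambda F_A$ picks up a $\Delta u$ term, so the second vortex equation becomes the elliptic PDE
\[
\tfrac{1}{e^{2}}\Delta u \;+\; \sum_{j}Q_j\, e^{2\langle Q_j,u\rangle}|\phi^{\,j}|^{2} \;-\; \tau \;+\; \tfrac{i}{e^{2}}\Lambda F_A \;=\; 0.
\]

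The necessity direction is the integration argument. If a full solution $(\tilde A,\tilde\phi) = (g(A),g(\phi))$ exists, pointwise invertibility of $g$ gives $I_{\tilde\phi} = I_\phi$. Integrating the second vortex equation over $M$ against $\omega_M^m/m!$ and using the identity $\Lambda F_{\tilde A}\,\omega_M^m = m\,F_{\tilde A}\wedge\omega_M^{m-1}$ exactly as in \eqref{stability_vector1}--\eqref{stability_vector2} gives
\[
\sigma \;=\; \sum_{j \in I_\phi}\|\tilde\phi^{\,j}\|_{L^2}^{\,2}\, Q_j ,
\]
with \emph{all} coefficients strictly positive since each $\tilde\phi^j$, $j\in I_\phi$, is a non-trivial holomorphic section and therefore non-zero off a proper analytic subvariety. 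By Lemma \ref{aux_lemma2} this places $\sigma$ in the interior of $\Delta_\phi$.

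For the sufficiency direction I would solve the PDE above by minimizing the Kempf--Ness type functional
\[
\Psi(u) \;=\; \int_M\!\Bigl[\,\tfrac{1}{2e^{2}}|du|^{\,2} \;+\; \tfrac{1}{2}\sum_{j}|\phi^{\,j}|^{2}\bigl(e^{2\langle Q_j,u\rangle}-1\bigr) \;-\; \langle\tau,u\rangle \;+\; \tfrac{i}{e^{2}}\langle\Lambda F_A,u\rangle\Bigr]\,\frac{\omega_M^{\,m}}{m!}
\]
on a suitable Sobolev completion; a direct computation shows that its Euler--Lagrange equation is precisely the PDE. Two properties to verify are (a) \emph{convexity} along affine paths $u + tv$, strict unless $v$ is a constant lying in $S_\phi^\perp$, which follows from positivity of the Dirichlet term and strict convexity of each exponential $e^{2\langle Q_j,u\rangle}$, $j\in I_\phi$; and (b) \emph{coercivity} of $\Psi$ modulo this stabilizer, which is where the stability hypothesis enters and is the main obstacle. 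Once these two properties hold, the direct method produces a weak minimizer, and elliptic regularity upgrades it to a smooth solution of the PDE; uniqueness up to constants in $\exp(S_\phi^\perp)$ (plus the residual unitary $T^k$-freedom in writing $g = \exp(u)\, t$) is then immediate from (a), since two minimizers differ by a $u_2 - u_1$ in the kernel of $\mathrm{Hess}\,\Psi$.

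The coercivity estimate is the heart of the proof. Testing $\Psi$ along a ray $u = tv$ with $v = v_{\mathrm{mean}} + v_\perp$ split into its $M$-average and zero-average parts, the Dirichlet term dominates in the $v_\perp$ direction while the behavior in $v_{\mathrm{mean}}$ is governed by the competition between the linear piece $-t\langle\sigma,v_{\mathrm{mean}}\rangle$ (after absorbing the mean of $i\Lambda F_A/e^{2}$ and of $\tau$) and the exponentials $e^{2t\langle Q_j,v_{\mathrm{mean}}\rangle}|\phi^j|^2$ with $j\in I_\phi$. The hypothesis $\sigma\in\mathrm{int}(\Delta_\phi)$, which by Lemma \ref{aux_lemma2} writes $\sigma = \sum_{j\in I_\phi}\lambda^j Q_j$ with every $\lambda^j > 0$, is exactly what forces at least one $\langle Q_j,v_{\mathrm{mean}}\rangle$ to be strictly positive whenever $v_{\mathrm{mean}}\notin S_\phi^\perp$, so the exponential contribution eventually dominates and drives $\Psi(tv)\to+\infty$. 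I expect the bulk of the technical work, as in \cite{Brad, Banfield, MiR}, to consist of making this competition argument rigorous in the appropriate Sobolev topology and in deducing the full coercivity estimate from its one-parameter version.
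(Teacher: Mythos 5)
Your overall strategy is the classical Kempf--Ness one and it differs from what the paper actually does. The paper does not solve the scalar PDE directly: it first translates the hypothesis $\sigma\in\operatorname{int}\Delta_\phi$ into the abstract $\tau$-stability of \cite{Banfield, MiR} (this is Theorem \ref{tau-stability}, whose proof is a finite-dimensional cone argument), then invokes the general Hitchin--Kobayashi correspondence of \cite{MiR} as a black box to get existence and uniqueness for the component of the equation along $S_\phi$, and finally treats the degenerate directions separately: after splitting the second vortex equation into its $S_\phi$- and $S_\phi^\perp$-components (equations \eqref{parallel}--\eqref{orthogonal}), the orthogonal part is a linear Laplace equation $\Delta_M s + i\Lambda F_A^\perp - e^2\tau^\perp=0$, solvable by Hodge theory exactly because $\sigma^\perp=0$, with $s$ unique up to an additive constant in $S_\phi^\perp$ --- which is where the extra $\exp(S_\phi^\perp)$ ambiguity in the statement comes from. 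Your necessity argument, your computation of the transformed equation, and your identification of the kernel of the Hessian with constants in $S_\phi^\perp$ (which reproduces Lemma \ref{stabilizers}) are all correct, and folding the degenerate directions into a single variational problem is in principle a cleaner organization than the paper's two-step split.

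The genuine gap is the one you yourself flag: coercivity, i.e.\ properness of $\Psi$ modulo $\exp(S_\phi^\perp)$, is asserted but not proved, and it cannot be obtained the way you describe. Ray-wise coercivity plus convexity does not by itself yield a minimizer in infinite dimensions; one needs properness with respect to a norm in which the functional is weakly lower semicontinuous and sublevel sets are weakly compact. Worse, for $\dim_\CC M = m>1$ the real dimension of $M$ exceeds $2$, so $W^{1,2}(M)$ does not embed into any exponential Orlicz class (Moser--Trudinger fails), and the term $\int_M |\phi^j|^2 e^{2\langle Q_j,u\rangle}$ is not even finite, let alone controlled, on the natural energy space of the Dirichlet term. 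Making your scheme work requires either a priori $C^0$ bounds on minimizing sequences fed into a continuity method (this is essentially what \cite{Brad} does in the line-bundle case, via a Kazhdan--Warner-type equation \cite{KW}), or the full analytic machinery of \cite{MiR}. Since the paper's proof consists precisely of reducing to that cited machinery plus the elementary Hodge-theoretic step for $S_\phi^\perp$, your proposal as written leaves open exactly the step that the citation is there to supply; to complete it you should either carry out the a priori estimates or, as the paper does, verify the stability hypothesis of \cite{MiR} (Theorem \ref{tau-stability}) and quote their existence theorem, adding the separate linear argument for the $S_\phi^\perp$-component.
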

\noindent
As mentioned above, the proof of this result relies heavily on a type of Hitchin-Kobayashi correspondence established in \cite{Banfield, MiR}. This correspondence equates the existence of the gauge transformation $g$ required in Theorem \ref{Hitchin_Kobayashi} to a stability condition on the initial pair $(A, \phi)$. This stability condition is defined in \cite{Banfield, MiR} in abstract terms that apply to vortices in very general settings. It is not an easy condition to evaluate in practice, though. In the case of the abelian GLSM, however, the stability condition simplifies dramatically and has a very concrete and natural appearance. This is the content of the next result, which underlies the main part of the proof of Theorem \ref{Hitchin_Kobayashi}.
\begin{theorem}\label{tau-stability}
Let $(A, \phi)$ be a pair that satisfies the first and third vortex equations. It is a simple pair in the sense of \cite{MiR} if and only if the weights $\{ Q_j: \phi^j \not\equiv 0 \}$ span the whole $\mathbb{R}^k$. Such a simple pair is $\tau$-stable in the sense of \cite{MiR} if and only if the vector $\sigma$ defined in \eqref{stability_vector2} is in the interior of the cone $\Delta_\phi$. 
\end{theorem}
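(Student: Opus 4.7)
The plan is to handle the two claims separately, treating the simpler assertion about simple pairs first and then reducing the abstract $\tau$-stability of \cite{MiR} to a concrete statement about cones in $\mathbb{R}^k$.

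For the first claim, I would unpack the definition of a simple pair. A pair is simple when its infinitesimal stabiliser inside the complex gauge group is as small as possible. Since the structure group is the abelian $T^k$, every infinitesimal gauge symmetry preserving the integrable unitary connection $A$ is a constant element $\xi$ of the Lie algebra $\mathfrak{t}_\CC^k = \CC^k$. Such a $\xi$ fixes $\phi$ iff $\langle Q_j, \xi\rangle\, \phi^j = 0$ for every $j$, that is, $\langle Q_j, \xi\rangle = 0$ for every $j \in I_\phi$. The set of such $\xi$ is the complexification of $S_\phi^\perp$, so the stabiliser reduces to the full centre precisely when $S_\phi^\perp = 0$, i.e.\ when the weights $\{Q_j : j \in I_\phi\}$ span $\RR^k$. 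This matches the definition of simplicity in \cite{MiR}.

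For the second claim, the plan is to translate the abstract stability inequality of Mundet i Riera and Banfield into the abelian language. A filtered reduction of a $T^k$-bundle is simply a choice of a one-parameter subgroup, i.e.\ a vector $\lambda \in \RR^k$, and the associated filtration of the fibre $\CC^n$ has $\phi$ lying in its non-positive part iff $\langle \lambda, Q_j \rangle \leq 0$ for every $j \in I_\phi$. (Equivalently, replacing $\lambda$ by $-\lambda$, the filtration corresponds to characters pairing non-negatively with all active weights.) Under this dictionary the Mundet weight $T(\lambda;A,\phi)$ simplifies, since the only parabolic reductions are abelian and the contribution of the section $\phi$ vanishes as soon as $\langle \lambda, Q_j \rangle \leq 0$ for $j \in I_\phi$. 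Integrating over $M$ and using the decomposition of $c_1(P)$ used in \eqref{stability_vector2} turns $T(\lambda;A,\phi)$ into the single expression $\langle \lambda, \sigma \rangle \cdot \text{const}$, up to a positive multiplicative factor. Thus $\tau$-stability in the sense of \cite{MiR} becomes the statement that $\langle \lambda, \sigma\rangle > 0$ for every non-zero $\lambda \in \RR^k$ satisfying $\langle \lambda, Q_j\rangle \leq 0$ for all $j \in I_\phi$.

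Finally, I would invoke a standard convex-geometry duality, presumably proved as Lemma \ref{aux_lemma2} in the Appendix: a vector $\sigma \in S_\phi$ lies in the interior of the cone $\Delta_\phi$ (as a subset of $S_\phi$) iff $\langle \lambda, \sigma\rangle > 0$ for every non-zero $\lambda$ in the dual cone $\{\lambda : \langle \lambda, Q_j\rangle \leq 0,\ j \in I_\phi\}$ considered modulo $S_\phi^\perp$. In the simple case $S_\phi = \RR^k$ the quotient is trivial, so this gives the stated equivalence immediately. I expect the main obstacle to be the correct bookkeeping in the translation of the Mundet stability functional: verifying that when the filtration comes from a one-parameter subgroup and $\phi$ is compatible with it, the section-dependent maximal weight in the definition really does drop out, leaving only the linear pairing with $\sigma$. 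Care is also needed to keep track of the orientation convention (whether $\lambda$ or $-\lambda$ parametrises destabilising reductions), to make sure ``interior of $\Delta_\phi$'' comes out rather than ``interior of $-\Delta_\phi$''.
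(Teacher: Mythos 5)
Your proposal follows essentially the same route as the paper's proof: the first claim is reduced to computing the infinitesimal stabiliser (Lemma \ref{stabilizers}), and the second to showing that the stability functional of \cite{MiR} equals $\langle \sigma, v\rangle$ up to a positive factor on the relevant cone of one-parameter subgroups, followed by a convex-duality argument. Two small remarks: the dual cone carries the opposite sign to the one you wrote --- in the paper's computation the maximal weight is $+\infty$ unless $\langle Q_j, v\rangle \ge 0$ for all $j \in I_\phi$, so stability reduces to $\langle\sigma, v\rangle>0$ on $\{v : \langle v, Q_j\rangle \ge 0 \text{ for all } j\in I_\phi\}$, which is exactly the orientation issue you flagged --- and the duality step is carried out inline via a facet argument rather than by Lemma \ref{aux_lemma2}, which only identifies the interior of $\Delta_\phi$ with combinations having strictly positive coefficients.
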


\noindent
An important feature of the moduli space of vortices on line bundles is that it can be described as a space of effective divisors on $M$, as we saw in Section 2.1. Unfortunately, for the general GLSM no such identifications exist.  A nice exception is the case of GLSM's with the special equality $n=k$, where again choosing a vortex solution corresponds to picking a set of effective divisors on $M$ that satisfy a natural topological condition. This is the content of the next result. To introduce the statement, observe that for an effective $T^k$-action on $\mathbb{C}^k$ the integral weights  $Q_1, \ldots , Q_k$ form a basis of $\mathbb{R}^k$. In particular, the vector $\sigma$ of \eqref{stability_vector2} can be uniquely written as a linear combination $\sum_{j=1}^k \lambda^j\, Q_j$. From Lemma \ref{necessary_condition} we know that if any of the coefficients $\lambda^j$ is negative, i.e. if $\sigma$ lies outside the cone $\Delta$, there are no vortex solutions. If $\sigma$ lies in $\Delta$, we denote by $I_+$ (respectively, $I_0$) the subset of $\{1, \ldots, k\}$ defined by the $j$'s such that $\lambda^j$ is positive (respectively, is zero). Then we can decompose $\mathbb{R}^k = S_0 \oplus S_+$, where the subspace $S_+$  (respectively, $S_0$) is spanned by the weights $Q_j$ such that $j\in I_+$ (respectively, $j\in I_0$). The vortex solutions are then described by the following result.

\begin{proposition}\label{divisors}
Assume that $n=k$, that the weights  $Q_1, \ldots , Q_k$ span $\mathbb{R}^k$, and that the vector $\sigma$ of \eqref{stability_vector2} lies in  the cone $\Delta \subset \mathbb{R}^k$. For each index $j \in I_0$ pick a holomorphic structure $\mathcal{H}_j$ on the line bundle $L_j$. For each  index $j \in I_+$ pick an effective divisor of analytic hypersurfaces $D_j$ on the manifold $M$ such that the fundamental homology class $[D_j]$ is Poincar\'e-dual to the Chern class $c_1 (L_j) =  \sum_a Q_j^a \, \,  c_1(P^a)$ in the cohomology $H^2 (M; {\mathbb Z})$. Then there exists a solution $(A, \phi)$ of the vortex equations \eqref{v-eq} such that:
\begin{itemize}

\item[{\bf (i)}] For all $j \in I_0$,  we have $\phi^j \equiv 0$ and the holomorphic structure on $L_j$ induced by $A$ coincides with $\mathcal{H}_j$.  

\item[{\bf (ii)}] For all $j \in I_+$, the divisor $D_j$ coincides with the divisor of the zero set of $\phi^j$ regarded as a section of $L_j$. 
\end{itemize}
This solution is unique up to $T^k$-gauge transformations, and up to constant gauge transformations with values on the subgroup $\exp (S_+^\perp) \subseteq \mathbb{R}_+^k$ of the group $(\CC^\ast)^k$. Different choices of divisors or holomorphic structures provide gauge inequivalent solutions. All vortex solutions are obtained in this way.
\end{proposition}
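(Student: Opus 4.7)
The plan is to reduce the statement to Theorem \ref{Hitchin_Kobayashi}. From the prescribed data I first construct a pair $(A,\phi)$ satisfying the first and third vortex equations. For each $j\in I_+$, the effective divisor $D_j$ with $[D_j]$ Poincar\'e-dual to $c_1(L_j)$ determines a holomorphic line bundle $\mathcal{O}(D_j)$, smoothly isomorphic to $L_j$, together with a holomorphic section $s_j$ (unique up to a nonzero scalar) whose zero divisor is exactly $D_j$. For $j\in I_0$ I instead use the given holomorphic structure $\mathcal{H}_j$ on $L_j$ and set $s_j=0$. Because $Q_1,\ldots,Q_k$ is a basis of $\mathbb{R}^k$, there is a unique connection $A$ on $P$ inducing on each $L_j$ the Chern connection of the selected holomorphic structure; with $\phi := (s_1,\ldots,s_k)$ one then has $F_A^{0,2}=0$ and $\bar{\partial}_A\phi = 0$ automatically.

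Next I verify the hypothesis of Theorem \ref{Hitchin_Kobayashi}. By construction $\phi^j\not\equiv 0$ precisely when $j\in I_+$, so $I_\phi = I_+$ and $\Delta_\phi$ is the cone in $S_+$ spanned by $\{Q_j : j\in I_+\}$. Since $\{Q_j : j\in I_+\}$ is a basis of $S_+$, the equality $\sigma = \sum_{j\in I_+}\lambda^j Q_j$ with each $\lambda^j>0$ places $\sigma$ in the relative interior of $\Delta_\phi$ (cf. Lemma \ref{aux_lemma2}). Theorem \ref{Hitchin_Kobayashi} then yields a complex gauge transformation $g$, unique up to $T^k$-gauge transformations and up to constants in $\exp(S_+^\perp)$, such that $(g(A), g(\phi))$ solves the full vortex equations. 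Properties (i) and (ii) follow immediately, since $(\CC^\ast)^k$-gauge transformations multiply each $\phi^j$ by a nowhere-vanishing function and preserve the isomorphism class of the holomorphic structure induced on each $L_j$.

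For the remaining assertions I rely on the same two invariants of the $(\CC^\ast)^k$-gauge orbit: the zero divisor of each nonzero $\phi^j$, and the isomorphism class of the holomorphic structure on each $L_j$ induced by $A$. Hence two distinct choices of data cannot produce gauge-equivalent solutions. Conversely, given an arbitrary vortex $(A,\phi)$ and setting $I_\phi := \{j : \phi^j\not\equiv 0\}$, the reverse direction of Theorem \ref{Hitchin_Kobayashi} forces $\sigma$ into the relative interior of $\Delta_\phi$; uniqueness of the expansion of $\sigma$ in the basis $Q_1,\ldots,Q_k$ then forces $I_\phi = I_+$, and the divisors of the nonzero $\phi^j$ together with the induced holomorphic structures on the remaining $L_j$ recover the original data. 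The delicate point I would check most carefully is that the constant gauge transformations in $\exp(S_+^\perp)$ really act trivially on the pair --- for $j\in I_+$ because $Q_j\cdot v = 0$ when $v\in S_+^\perp$, and for $j\in I_0$ because $\phi^j=0$ --- so that these redundancies introduce no extra moduli beyond those parametrized by the data $(D_j,\mathcal{H}_j)$.
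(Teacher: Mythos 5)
Your construction of the integrable pair $(A,\phi)$ from the prescribed data, the verification that $I_\phi=I_+$ and that $\sigma$ lies in the interior of $\Delta_\phi$, and the appeal to Theorem \ref{Hitchin_Kobayashi} for existence all follow the paper's argument essentially verbatim; the same is true of your surjectivity argument and of the observation that distinct data yield gauge-inequivalent solutions. The one genuine gap is in the uniqueness clause. The uniqueness statement in Theorem \ref{Hitchin_Kobayashi} controls only the complex gauge transformations $g$ applied to the \emph{fixed} integrable pair $(A,\phi)$ you constructed: it says that the set of full vortex solutions of the form $(g(A),g(\phi))$ is a single $T^k\cdot\exp(S_+^\perp)$-orbit. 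It does not by itself say that an \emph{arbitrary} vortex solution $(A',\phi')$ satisfying (i) and (ii) for the same data lies in the $(\CC^\ast)^k$-orbit of $(A,\phi)$; and your remark that the zero divisors and the induced holomorphic structures are invariants of the $(\CC^\ast)^k$-orbit gives the implication in the wrong direction (gauge-equivalent $\Rightarrow$ same data, not same data $\Rightarrow$ gauge-equivalent).

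To close the gap one must exhibit the comparison gauge transformation explicitly, and this is where the hypothesis $n=k$ is used a second time: for $j\in I_+$ the quotient $(\phi')^j/\phi^j$ is a nowhere-vanishing function $f_j:M\to\CC^\ast$, while for $j\in I_0$ the two induced holomorphic structures on $L_j$ are isomorphic by assumption (i), so the corresponding connections differ by a complex gauge transformation $f_j$ of $L_j$; since $Q_1,\ldots,Q_k$ is a basis of $\mathbb{R}^k$, the system $f_j=\prod_a (g_a)^{Q_j^a}$ can be solved for a gauge transformation $g:M\to(\CC^\ast)^k$ with $(A',\phi')=g(A,\phi)$. Only after this step does the uniqueness clause of Theorem \ref{Hitchin_Kobayashi} apply to force $g$ into $T^k\cdot\exp(S_+^\perp)$. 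This is precisely the step the paper's proof of Proposition \ref{divisors} carries out and your proposal omits.
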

\begin{remark}
The principal torus bundle $P \rightarrow M$ may have classes $c_1(P^a)$ such that it is impossible to choose a set of divisors $D_j$ satisfying the required condition. The Proposition then asserts that such a GLSM has no smooth vortex solutions. The same thing happens if, for any $j\in I_0$, the line bundle $L_j$ does not admit  a holomorphic structure, i.e. if $L_j$ does not admit a connection with curvature of type $(1,1)$.
\end{remark}

\subsubsection*{Proofs}

\begin{lemma}\label{stabilizers}
Let $(A, \phi)$ be a pair that satisfies the first and third vortex equations. The infinitesimal gauge transformations that preserve $(A, \phi)$ are represented by the constant maps $M \rightarrow S_\phi^\perp \, \subseteq \mathbb{R}^k$. In particular, the stabilizer of $(A, \phi)$ under $T^k$-gauge transformations is discrete iff the weights $\{ Q_j: \phi^j \not\equiv 0 \}$ span the whole $\mathbb{R}^k$. If those weights generate the lattice $\mathbb{Z}^k$ over the integers, the stabilizer of $(A, \phi)$ is trivial.
\end{lemma}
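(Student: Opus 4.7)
The plan is a direct analysis of the linearized gauge action. Parametrize an infinitesimal gauge transformation by a smooth map $\xi: M \to \mathbb{R}^k$ (identifying $\mathfrak{t}^k \simeq \mathbb{R}^k$). Because $T^k$ is abelian, the induced infinitesimal actions take the form
\begin{equation*}
\delta A \ = \ d\xi , \qquad \delta \phi^j \ = \ i\, \langle Q_j,\, \xi\rangle\, \phi^j ,
\end{equation*}
where $\langle \cdot,\cdot\rangle$ denotes the standard pairing between weights and Lie-algebra elements, up to signs that do not affect the conclusion.

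First I would note that $\delta A = 0$ forces $d\xi = 0$, and since $M$ is connected this means $\xi$ is a constant vector in $\mathbb{R}^k$. Next, for each $j \in I_\phi$ the section $\phi^j$ is a nontrivial holomorphic section with respect to the holomorphic structure induced by $A$ (here one uses that $(A,\phi)$ satisfies the first and third vortex equations), so its zero locus is a proper analytic subset of $M$; on the nonempty open complement the equation $\delta \phi^j = 0$ reduces to the purely algebraic condition $\langle Q_j, \xi\rangle = 0$. Intersecting these orthogonality conditions over all $j \in I_\phi$ shows that the infinitesimal stabilizer consists exactly of the constant maps into $S_\phi^\perp$. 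Conversely, any such constant $\xi$ manifestly preserves $(A,\phi)$, giving the first assertion.

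The discreteness criterion then follows automatically: the $T^k$-stabilizer of $(A,\phi)$ is a closed Lie subgroup of the gauge group whose Lie algebra has just been identified with $S_\phi^\perp$, so it is discrete iff $S_\phi^\perp = \{0\}$, iff the weights $\{Q_j : \phi^j \not\equiv 0\}$ span $\mathbb{R}^k$ over the reals.

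For the last assertion, I would simply repeat the argument at the group level. A gauge transformation $g: M \to T^k$ fixing $A$ must satisfy $g^{-1}dg = 0$, so by connectedness $g$ is a constant element of $T^k$. Writing $g = [\theta]$ with $\theta \in \mathbb{R}^k / 2\pi \mathbb{Z}^k$, the condition that $g$ fix $\phi^j$ for every $j \in I_\phi$ becomes $\langle Q_j, \theta\rangle \in 2\pi \mathbb{Z}$. When $\{Q_j : j \in I_\phi\}$ generates $\mathbb{Z}^k$ integrally, the dual-lattice condition forces $\theta \in 2\pi \mathbb{Z}^k$, hence $g = 1$. No step presents a genuine obstacle; the only point requiring some care is maintaining consistent conventions for the pairing between weights, the Lie algebra, and the character lattice of $T^k$.
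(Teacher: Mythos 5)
Your proof is correct and follows essentially the same route as the paper: preservation of $A$ forces the infinitesimal (or finite) gauge parameter to be constant, preservation of each nontrivial holomorphic component $\phi^j$ forces orthogonality to $Q_j$ on the dense complement of its zero set, and the integral-generation hypothesis kills the constant via the dual-lattice (equivalently, integer-combination) argument. The only cosmetic difference is that in the final step the paper deduces $g\equiv 1$ pointwise from preservation of $\phi$ alone, without first invoking constancy from $\dd g=0$, but this does not change the substance.
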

\begin{proof}
An infinitesimal gauge transformation $v: M \rightarrow i \mathbb{R}^k$ acts on sections of $\oplus_j L_j$ as 
\[
(\phi_1 ,  \ldots , \phi_n ) \longmapsto \ \dd \rho_{v} (\phi) \ = \  \Big( \sum_a v_a\, Q^a_1\, \phi^1 ,\,  \ldots \, ,  \sum_a v_a \, Q^a_n \, \phi^n \Big) \ .
\]
So $\dd \rho_{v} (\phi)$ vanishes if and only if the sum $\sum_a v_a Q^a_j$ is identically zero for every index $j$ such that  $\phi^j \not\equiv 0 $. This is equivalent to saying that $v$ has values on the orthogonal complement $ S_\phi^\perp$. Moreover, since infinitesimal gauge transformations act on connections by $A \mapsto A + \dd v$, a map $v$ preserves the connection if and only if it is constant.

Suppose now that the weights $\{ Q_j : \phi^j \not\equiv 0  \}$ generate the full lattice $\mathbb{Z}^k \subset \mathbb{R}^k$ over the integers. Then the vector $(1, 0, \ldots , 0)$, for instance, can be written as a linear combination $\sum_{j\in I_\phi} b^j \, Q_j$ for some integers $b^j$. If a gauge transformation $g: M \rightarrow (\CC^\ast)^k$ preserves the section $\phi$, then we must have
\[
\prod^k_{a=1} (g_a)^{Q_j^a} \ = \ 1
\]
for all $j \in I_\phi$. This implies that
\[
1 \ = \ \prod_{j \in I_\phi} \prod^k_{a=1} (g_a)^{ b^j \, Q_j^a} \ = \ g_1\ .
\]
Using a similar argument for all the other components of $g$, we see that $g$ is the constant map to the identity in $(\CC^\ast)^k$.
\end{proof}

\begin{proof}[{\bf Proof of Theorem \ref{tau-stability}}]
In the language of \cite{MiR}, the pair $(A, \phi)$ is simple precisely if it has discrete stabilizer under gauge transformations. So the first statement of the theorem follows from Lemma \ref{stabilizers}. 
Now identify the Lie algebra $\mathfrak{t}^k$ with $\mathbb{R}^k$ so that the usual exponential map $\mathfrak{t}^k \rightarrow T^k$ is given by $v \mapsto e^{iv}$ for any $v \in \mathbb{R}^k$. Then the moment map $\mu : \CC^n \rightarrow \mathfrak{t}^k$ associated to the representation $\rho $ is 
\[
\mu (z) \ = \ - \, \frac{1}{2} \,\big( \: \sum_j   Q_j \: |z^j|^2   \ - \  \tau   \: \big) \ .
\]
Since the representation is effective, the image $\mu (\CC^n)$ is a $k$-dimensional cone inside the Lie algebra. 
In our setting, the definition of stability in \cite{MiR} can be stated as follows:

\noindent
{\it  A pair $(A, \phi )$ is $\tau$-stable if and only if
\begin{gather}  \label{stability2}
\int_{x \in M}  \Big[ \langle v, -i \Lambda F_A \rangle \ +\ 2 \, e^2 \lambda (\phi (x) , v) \, \Big] \, \frac{\omega_M^m}{m!}\quad
> \ \ \ 0 \quad \ {for\ all\ } v\in \RR^k \setminus \{ 0 \} \ .
\end{gather}}
\noindent
In this expression $\langle  \cdot , \cdot \rangle$ denotes the canonical euclidean product  in $\mathbb{R}^k$ and, calling $\eta_t^v : \CC^n \rightarrow \CC^n $ the gradient flow of the function $\langle v, \mu\rangle : \CC^n  \rightarrow \RR$, we define for any $z \in \CC^n$:
\[
\lambda (z , v) \ := \ \lim_{t\rightarrow +\infty} \ \langle v ,  \mu \rangle
(\eta_t^v (z))\ .
\]
Since the moment map $\mu$ is invariant by the representation $\rho$, so is $\lambda$, and the composition $\lambda (\phi (x) , v)$ is well defined. The integral of the first term in \eqref{stability2} is
\[
\int_{M}  \langle v, -i \Lambda F_A  \rangle \, \frac{\omega_M^m}{m!}\ \ = \ 
-\, 2 \pi\, m\, (\text{Vol}\, M) \, \frac{\langle  c_1(P)^{\parallel} , v \rangle}{[\omega_M]} \  .
\]
As for the second term, start by noticing that for abelian hamiltonian actions the gradient flow of $\langle v, \mu\rangle$ is related to the exponential map of the complexified group action: 
\[
\eta_t^v (z) \ = \ \rho_{\exp (- t v)} \, (z) \ .
\]
Using the explicit expressions for $\rho$ and the moment map, it is easy to check that
\[
\lambda (z ,v) \ = \ \begin{cases}
+ \infty  &\mbox{if for some $j$ we have $\langle Q_j , v \rangle < 0$ and $z^j \neq 0$}  \ , \\
 \langle  \tau , v \rangle / 2 & \mbox{otherwise} \ .
\end{cases}
\]
Consider the index subset $I_\phi = \{ j\in \{1, \ldots, n\} : \, \phi_j \not\equiv 0 \}$. Since the section $\phi$ is holomorphic, for every $j \in I_\phi$ the components $\phi_j$ are nonzero almost everywhere on $M$. It follows that
\[
\int_{x \in M}  2 \, e^2 \, \lambda (\phi (x) , v) \, \, \frac{\omega_M^m}{m!} \ = \   
\begin{cases}
+ \infty  &\mbox{if $\, \langle  Q_j , v \rangle < 0 \, $ for some $j \in I_\phi$} \ , \\
 e^2 \, (\text{Vol}\, M) \, \langle  \tau , v \rangle & \mbox{otherwise} \ .
\end{cases}
\]
In the first case the inequality in condition \eqref{stability2} is clearly satisfied. So we only need to worry about the second case, i.e. when $v$ is such that the inner product $\langle  Q_j , v \rangle$ is non-negative for all $j \in I_\phi$. In this case the inequality in condition \eqref{stability2} reduces to
\begin{equation}\label{stability3}
\big\langle  e^2 \, \sigma ,\ v \big\rangle \ > \ 0 \ .
\end{equation}
Now suppose that $\sigma$ can be written as a linear combination $\sigma = \sum_{j \in I} \alpha^j \, Q_j$ with strictly positive coefficients for some non-empty subset $I \subseteq I_\phi$. Then the inner product $\langle \sigma , v \rangle$ is non-negative. It vanishes precisely when $v$ is orthogonal to the weights $\{Q_j :  j \in I\}$, or, equivalently, when $v$ is orthogonal to the real span $S_I \subseteq \mathbb{R}^k$ of those weights.  But by assumption $\langle  Q_j , v \rangle$ is non-negative for all $j\in I_\phi$, i.e. the cone $\Delta_{\phi}$ lies entirely in one of the closed half-spaces of $\mathbb{R}^k$ determined by the plane $v^\perp$. So if $\langle \sigma , v \rangle$ vanishes,  the vector $\sigma \in S_I  \subseteq v^\perp$ necessarily lies on the boundary of the cone $\Delta_{\phi}$. This shows that whenever $\sigma$ is in the interior of  $\Delta_{\phi}$, the inequality \eqref{stability3} is always satisfied, and hence the pair $(A, \phi)$ is $\tau$-stable.

To prove the converse, suppose that $\sigma$ is outside the $k$-dimensional closed cone $\Delta_{\phi}$. Since the cone is convex, it has a $k-1$-dimensional facet such that $\sigma$ and the interior of the cone lie in opposite sides of the facet, or, more properly, lie in opposite sides of the subspace of $\mathbb{R}^k$ spanned by the facet.  Call this facet $F_\sigma$. On the other hand, if $\sigma$ lies on the boundary of the cone $\Delta_{\phi}$, choose $F_\sigma$ to be any $k-1$-dimensional facet that contains $\sigma$. Now let $v \in \mathbb{R}^k$ be a non-zero vector orthogonal to $F_\sigma$ and pointing towards the half-space where the cone lies. Then clearly $\langle u , v \rangle \geq 0$ for all $u \in \Delta_{\phi}$, and in particular $\langle  Q_j , v \rangle$ is non-negative for all $j \in I_\phi$. But at the same time $\langle \sigma ,\ v \rangle$ is negative or zero, so condition \eqref{stability3} is not satisfied. This shows that if $\sigma$ is not in the interior of $\Delta_{\phi}$, the pair $(A, \phi)$ is not $\tau$-stable. 
\end{proof}

\begin{proof}[{\bf Proof of Theorem \ref{Hitchin_Kobayashi}}]
The necessity part of the Theorem is clear: if the pair $(g(A), g(\phi))$ solves the full vortex equations, integration over $(M, \omega_M)$ of the second vortex equation shows that the vector $\sigma$ can be written as a combination of the weights $\{ Q_j: \phi^j \not\equiv 0 \} $ with strictly positive coefficients.  

The sufficiency and uniqueness follow from Theorem \ref{tau-stability} and the Hitchin-Kobayashi correspondence of \cite{Banfield, MiR} (see in particular Sections 6.2, 6.4 and 6.5 of \cite{MiR}).  Suppose first that the pair $(A, \phi)$ is simple in the sense of Theorem \ref{tau-stability}. Then the assumption on $\sigma$ implies that $(A, \phi)$ is $\tau$-stable, and so the existence of $g$ and its uniqueness up to $T^k$-gauge transformations follows directly from Sections 6.2 and 6.4 of \cite{MiR}. Now suppose that the subspace $S_{\phi} \subset \mathbb{R}^k$ spanned by the weights $\{ Q_j \in  \mathbb{Z}^k : \phi^j \not\equiv 0  \}$ has dimension $l< k$. Then the orthogonal decomposition $\mathbb{R}^k = S_{\phi} \oplus S_{\phi}^\perp$ allows us to decompose the second vortex equation as a pair of equations
\begin{gather}
i\Lambda F_A^\parallel \,+ \,  e^2   \Big[  \big( \: \sum_j   Q_j \: |\phi^j|^2  \: \big)  - \tau^\parallel \Big] \ = \ 0    \label{parallel}\\
 i \Lambda F_A^\perp \,- \,  e^2  \, \tau^\perp  \ = \ 0  \ ,    \label{orthogonal}
\end{gather}
where the symbols $\parallel$ and $\perp$ denote the components parallel and orthogonal, respectively, to the subspace $S_{\phi} \subset \mathbb{R}^k$. The natural strategy here is to solve these two equations one at a time. By Lemma \ref{stabilizers}, there are no infinitesimal gauge transformations of the form $M \rightarrow S_{\phi}$ that preserve the pair $(A, \phi)$. Using this fact and the assumption on $\sigma \in S_{\phi}$, the proof in Section 6 of \cite{MiR} guarantees the existence of a complex gauge transformation 
\begin{equation} \label{parallel_transformation}
g_\parallel: \ M \ \longrightarrow \  \exp (S_{\phi} \oplus i S_{\phi}) \ \subset \ (\CC^\ast)^k \ ,
\end{equation}
 unique up to $T^k$-gauge transformations, such that the pair $(g_\parallel(A), \, g_\parallel(\phi))$ solves equation \eqref{parallel} (see the comments in Section 6.5 of \cite{MiR}). Moreover, observe that any complex gauge transformation of the form \eqref{parallel_transformation} preserves equation \eqref{orthogonal}, and that any gauge transformation of the form
 \[
g_\perp: \ M \ \longrightarrow \  \exp (S_{\phi}^\perp \oplus i S_{\phi}^\perp) \ \subset \ (\CC^\ast)^k \ 
 \]
preserves equation \eqref{parallel}. So to find a solution of the full vortex equations, we only need to find a map $s : M \rightarrow S_{\phi}^\perp$ such that the imaginary gauge transformation $g_\perp = \exp s$ with values on $(\mathbb{R}_+)^k$ satisfies
\[
 i \Lambda F_{(\exp{s}) (A)}^\perp \,- \,  e^2  \, \tau^\perp  \ = \   \Delta_M (s) \ + \  i \Lambda F_{A}^\perp\,- \,  e^2  \, \tau^\perp \ = \ 0 \ ,
\]
where $\Delta_M$ denotes the Laplacian on the K\"ahler manifold $(M, \omega)$. But the assumptions of the Theorem say that the orthogonal component $\sigma^\perp$ vanishes, and by formula \eqref{stability_vector1} this is the same as saying that the integral of $ i \Lambda F_{A}^\perp\,- \,  e^2  \, \tau^\perp$ over the manifold $(M, \omega)$ vanishes. Hodge theory then guarantees the existence of the desired $s$, which is unique up to an additive constant in $S_{\phi}^\perp$. This completes the proof of the sufficiency and uniqueness statements. 

Finally, and very obviously, given any vortex solution $(A, \phi)$, the integral of the second vortex equation over $(M, \omega)$ shows that $\sigma$  must lie in the interior of the cone  $\Delta_\phi$. So the solution $(A, \phi)$ can be obtained by the method prescribed in the Theorem if we choose $g$ to be the trivial gauge transformation. In particular all vortex solutions can be obtained by the method prescribed in the Theorem.
\end{proof}

\begin{proof}[{\bf Proof of Proposition \ref{divisors}}]
We start by showing that each valid choice of divisors $D_j$ and holomorphic structures $\mathcal{H}_j$ leads to a vortex solution.
Firstly, the topological condition on the divisors and standard results on line bundles \cite{GH} guarantee that, for each $j \in I_+$, the nonzero divisor  $D_j$ defines a holomorphic structure on $L_j \rightarrow M$ and a holomorphic section $\phi^j \in \Gamma (L_j)$ such that $D_j$ is the divisor of the zero set of $\phi^j$.  In particular we have holomorphic structures on all the hermitian bundles $L_j$, and so for each index $j \in \{1, \ldots, k\}$ we can consider the Chern connection $B_j$ on $L_j$. This is a hermitian connection with curvature of type $(1,1)$. Then the system of $k$ equations and $k$ variables 
\[
B_j \ = \ \sum_a Q^a_j \, A_a \qquad \ \text{for all}\ \ 1 \leq j \leq k 
\]
has a unique solution $A$, since by assumption the vectors $Q_j$ form a basis of $\mathbb{R}^k$. Therefore the local 1-forms $A_1 , \ldots, A_k$ define a connection on the principal bundle $P \rightarrow M$. By construction we have that
\[
\bar{\partial}_A \phi \ = \ (\ldots , \bar{\partial}_{B_j} \phi^j ,  \dots  )_{1 \leq j \leq k} \ = \ 0  \ ,
\]
 and that
\[
\sum_a\, Q^a_j \, F_{A_a}^{0,2} \ = \ F_{B_j}^{0,2} \ = \ 0 \ .
\]
Since the $Q_j$'s form a basis of $\mathbb{R}^k$, the last expression implies that $F_{A}^{0,2}  = 0$. Thus the pair $(A, \phi)$ satisfies the first and third vortex equations. By construction, the index set $I_\phi$ coincides with $I_+$ and the vector $\sigma$ lies in the interior of $\Delta_\phi$. So we can apply Theorem \ref{Hitchin_Kobayashi} to obtain a complex gauge transformation $g:M \rightarrow (\CC^\ast)^k$ such that $(g(A), g(\phi))$ is a solution of the full vortex equations. Since complex gauge transformation do not change (the equivalence class of) the holomorphic structure on the line bundle, nor the divisor of the zero set of sections, we have obtained a full vortex solution satisfying $(i)$ and $(ii)$. 

The uniqueness part of the Proposition follows from the uniqueness in Theorem \ref{Hitchin_Kobayashi}. Let $(A, \phi)$ and $(A', \phi')$ be two vortex solutions satisfying conditions $(i)$ and $(ii)$. Then for each $j \in I_+$ the quotient $f_j = (\phi') ^j / \phi^j$ is a nowhere vanishing section of the trivial bundle over $M$. Moreover, for each $j \in I_0$, if we denote by $B_j$ and $B_j'$ the connections on $L_j$ induced by $A$ and $A'$, respectively, then assumption $(i)$ implies that $B_j$ and $B_j'$ are related by a complex gauge transformation $f_j : M \rightarrow \mathbb{C}^\ast$ on $L_j$. So we can define another complex gauge transformation $g:M \rightarrow (\CC^\ast)^k$ by the system of $k$ equations
\[
f_j \ = \ \sum_a \,  Q^a_j \, \,  g_a \qquad \ \text{for all}\ \ 1 \leq j \leq k \ .
\]
By construction we then have that $(A', \phi') = g(A, \phi)$ --- a fact that can be checked directly using the assumption that the vectors $Q_j$ form a basis of $\mathbb{R}^k$. The uniqueness part in Theorem \ref{Hitchin_Kobayashi} then guarantees that $g$ actually has values on the real torus $T^k$, or has at most a constant factor with values on the subgroup $\exp{S_\phi^\perp} \subseteq \mathbb{R}_+^k$ of the group $(\CC^\ast)^k$.  The uniqueness statement follows from the observation that $S_\phi = S_+$.

To see that all vortex solutions are obtained in this way, just observe that if $(A, \phi)$ is a vortex solution, for $j \in I_+$ we can take the divisors $D_j$ to be those defined by the zero set of $\phi^j$. Then the homology class of $D_j$ is Poincar\'e-dual to $c_1(L_j)$. For $j \in I_0$, we take the holomorphic structure $\mathcal{H}_j$ on $L_j$ to be the one determined by the connection on this bundle induced by $A$ (this connection is integrable because $A$ satisfies the third vortex equation). By the uniqueness proved above, the solution associated to these $D_j$'s and $\mathcal{H}_j$'s by the existence part of the Proposition will be gauge equivalent to the original $(A, \phi)$. Finally, different choices of $D_j$'s and $\mathcal{H}_j$'s provide gauge inequivalent solutions because $(\mathbb{C}^\ast)^k$-gauge transformations do not change (the equivalence class of) holomorphic structures on line bundles, nor the divisor of the zero set of sections.
\end{proof}

\subsection{Case of simply connected manifolds}

Assume that the compact K\"ahler manifold $M$ is also simply connected. Then line bundles over $M$ have unique holomorphic structures, as in Section 2.1. The vector space $V$ of all holomorphic sections of $\oplus_j L_j$ splits as a direct sum
\begin{equation} \label{space_sections}
V \ = \ \bigoplus_{j=1}^{n} H^0 (M , L_j)  \ . 
\end{equation}
If we let the torus $T^k$ act on each subspace $H^0 (M , L_j)$ by scalar multiplication with weights $Q_j \in \mathbb{Z}^k$, we get a global torus action on $V$. If we choose and fix an euclidean metric on each subspace $H^0 (M , L_j)$, we get a global metric on $V$ that is preserved by the torus action. A moment map $\nu: V \rightarrow i\mathbb{R}^k$ for this action is given by 
\begin{equation}\label{m_map}
\nu (w) \ = \ - \frac{i}{2} \, \Big ( \sum_{j=1}^n \, Q_j \, |w^{j}|^2  \ - \ \sigma \Big) \ ,
\end{equation}
where each $w^j$ is a vector in $H^0 (M , L_j)$.
With these definitions we can state the main result of this section.
\begin{proposition} \label{toric_moduli_space}
Let $M$ be a compact and simply connected K\"ahler manifold. Assume that condition \eqref{integrability} is satisfied. Then the moduli space of vortex solutions of the GLSM is isomorphic to the symplectic quotient 
\begin{equation} \label{moduli_space_glsm1}
\MM \ \simeq \ \nu^{-1}(0)\, /\, T^k .
\end{equation}
In particular, the vector $\sigma$ defined in \eqref{stability_vector2} determines the level of the quotient.
\end{proposition}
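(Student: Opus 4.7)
The plan is to combine the Hitchin--Kobayashi correspondence of Theorem~\ref{Hitchin_Kobayashi} with the Kempf--Ness theorem for linear torus actions. Roughly, simple connectedness collapses the infinite-dimensional space of integrable connections to a single gauge orbit, so the moduli problem reduces to a finite-dimensional GIT/symplectic quotient of the space $V$ introduced in \eqref{space_sections}. First I would fix a base unitary connection $A_0$ on $P$ with $F_{A_0}^{0,2}=0$, namely the Chern connection of the unique holomorphic structure on each $L_j$; the existence and uniqueness of this structure follows from the same exact sequence argument used in Proposition~\ref{modulispace1}, which needs only $b_1(M)=0$. Given any $(A,\phi)$ satisfying the first and third equations of \eqref{v-eq}, the integrable connection $A$ induces a holomorphic structure on each $L_j$ isomorphic to that of $A_0$, so there is a smooth complex gauge transformation $g:M\to(\CC^\ast)^k$ with $g(A)=A_0$. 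The transformed section $w:=\rho(g)\phi$ is then holomorphic with respect to $A_0$ and so lies in $V$.

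To descend to a finite-dimensional action I would next identify the residual gauge freedom. A complex gauge transformation that fixes $A_0$ must be holomorphic; in the abelian case each component $g_a:M\to\CC^\ast$ is then a holomorphic function, hence constant since $M$ is compact. Consequently, the space of pairs $(A,\phi)$ satisfying the first and third vortex equations modulo the full $(\CC^\ast)^k$-gauge group is in natural bijection with $V$ modulo the constant linear action of $(\CC^\ast)^k$ through $\rho$. Under this identification the index set $I_\phi$ attached to $w\in V$ is simply $\{j:w^j\neq 0\}$, so the stability condition in Theorem~\ref{Hitchin_Kobayashi} translates into the requirement that $\sigma$ lie in the interior of the cone $\Delta_w$ spanned by $\{Q_j:w^j\neq 0\}$.

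The final step is to invoke the Kempf--Ness theorem for the linear Hamiltonian $T^k$-action on $V$ with moment map $\nu$ as in \eqref{m_map}: for a linear torus representation a vector $w$ is semistable at level $\sigma$ precisely when $\sigma$ belongs to the moment polytope of the closure of its complex orbit, that is, to $\Delta_w$, and each such orbit meets $\nu^{-1}(0)$ in a single $T^k$-orbit. Combined with the bijection from the previous step, this yields $\MM\simeq\nu^{-1}(0)/T^k$. The main obstacle I anticipate is bookkeeping: one must match stable versus polystable orbits and, in particular, check that the extra stabilizer $\exp(S_\phi^\perp)$ flagged in Theorem~\ref{Hitchin_Kobayashi} is accounted for by the isotropy of $w$ under $T^k\subset(\CC^\ast)^k$, so that the quotient coincides with the symplectic reduction not merely as a set but with the expected (orbifold) structure. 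Once this matching is verified, no further analytic input is required.
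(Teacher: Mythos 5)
Your proposal follows essentially the same route as the paper: uniqueness of the holomorphic structure on $P$ (from $b_1(M)=0$) reduces the problem to the finite-dimensional quotient $V/(\CC^\ast)^k$, Theorem \ref{Hitchin_Kobayashi} cuts this down to the stable locus $V_\sigma$, and the standard symplectic-GIT correspondence (the paper cites Lemma 7.2 of Kirwan, your Kempf--Ness step) identifies $V_\sigma/(\CC^\ast)^k$ with $\nu^{-1}(0)/T^k$. The bookkeeping you flag at the end --- matching the interior-of-$\Delta_w$ condition with orbits actually meeting $\nu^{-1}(0)$, and absorbing the $\exp(S_\phi^\perp)$ stabilizers --- is handled the same way in the paper, so no substantive difference remains.
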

\begin{proof}
Fix a holomorphic structure on $P$ and let $A$ be the respective Chern connection. Since $M$ is simply connected, this holomorphic structure is unique up to isomorphism, so every integrable connection $A_1$ on $P$ is $(\CC^\ast)^k$-gauge equivalent to $A$. The gauge transformation that takes $A_1$ to $A$ is unique up to multiplication by constant $(\CC^\ast)^k$-gauge transformations. It follows  that every solution $(A_1, \phi_1)$ of the first and third vortex equations is $(\CC^\ast)^k$-gauge equivalent to some pair $(A, \phi)$, where $\phi$ is a holomorphic section of $\oplus_j L_j$ (equipped with the holomorphic structure induced by $P$) that is uniquely determined up to the $(\CC^\ast)^k$-action on $V$. So we have a natural map from the moduli space $\mathcal{B}$ of solutions of the first and third vortex equations to the topological quotient
\[
\gamma: \ \mathcal{B} \ \longrightarrow  \ V / (\CC^\ast)^k \ .
\]
This map is injective because two solutions $(A_1, \phi_1)$ and  $(A_2, \phi_2)$ that are $(\CC^\ast)^k$-gauge equivalent to the same pair $(A, \phi)$, are necessarily $(\CC^\ast)^k$-gauge equivalent to each other. There is also an obvious map $\MM \rightarrow \mathcal{B}$. This map is injective because the uniqueness part of Theorem \ref{Hitchin_Kobayashi} guarantees that two vortex solutions that are $(\CC^\ast)^k$-gauge equivalent, are in fact related by a $T^k$-gauge transformation. (Notice that the constant gauge transformations with values on the subgroups $\exp (S_\phi^\perp) \subseteq \mathbb{R}_+^k$ mentioned in  Theorem \ref{Hitchin_Kobayashi} preserve the respective vortex solutions, by Lemma \ref{stabilizers}). Moreover, it also follows from Theorem \ref{Hitchin_Kobayashi} that the image of the injective map $\MM \rightarrow \mathcal{B}$ is the set of equivalence classes of pairs $(A_1, \phi_1)$ such that the vector $\sigma$ is in the interior of the cone $\Delta_{\phi_1}$. Using the injective correspondence $\gamma$, the moduli space $\MM$ can thus be identified with the quotient of the $(\CC^\ast)^k$-invariant subset of holomorphic sections
\begin{equation}\label{stable_subset}
V_\sigma \ := \  \{ \phi \in V: \ \sigma\ {\text{is in the interior of the cone }} \Delta_\phi  \} \  \ \subseteq \ V 
\end{equation}
by the $(\CC^\ast)^k$-action.

Now, given any point $w \in V$, define the index subset $I_w$ as the set $\{j \in \{ 1, \ldots, n \}:\, w^j \neq 0 \}$. The orbit of $w$ under the $(\CC^\ast)^k$-action intersects the inverse image $\nu^{-1} (0)$ if and only if the vector $\sigma \in \mathbb{R}^k$ can be written as a linear combination $\sum_{j \in I_w} \lambda^j Q_j$ with strictly positive coefficients, i.e. if and only if $\sigma$ lies in the interior of the cone 
\[
\Delta_w \ := \ \Big\{  v \in \mathbb{R}^k: \  v = \sum_{j \in I_w }\, \lambda^j \, Q_j \ \ {\rm{with}}\ \ \lambda^j \geq 0  \Big\}  \ .
\]  
When this happens, the intersection of the $(\CC^\ast)^k$-orbit of $w$ with the inverse image $\nu^{-1} (0)$ is a single $T^k$-orbit (see for instance Lemma 7.2 in \cite{Kirwan}). This implies that
\begin{equation}\label{sympl-git-quotients}
\nu^{-1}(0)\, /\, T^k  \ \simeq \  \Big( (\CC^\ast)^k \cdot \nu^{-1} (0) \Big) \, /\, (\CC^\ast)^k \ \simeq \ V_\sigma  \, /\, (\CC^\ast)^k \ ,
\end{equation}
where $V_\sigma$ is the set of vectors $w \in V$ such that $\sigma$ is in the interior of the cone $\Delta_w$. But in the discussion above we had concluded that the last quotient can be identified with the vortex moduli space $\MM$, so this finishes the proof.
\end{proof}

\subsubsection*{Dimension and smoothness of the moduli space}

The dimension of the vortex moduli space 
depends on the value of the parameter $\sigma \in \mathbb{R}^k$; it can be described as follows. Let $\{ I_1, \ldots , I_N \}$ be the collection of all non-empty index subsets $I \subseteq \{1, \ldots, n\}$ such that $\sigma$ is in the interior of the cone $\Delta_{I}$. For each subset $I_s$ in the collection, define
the non-negative integers \[
D_s \ := \  \sum_{j \in I_s} \, \dim_\CC \, H^0(M, L_j)  
\]
and 
\[
d_s \ := \ \dim_{\mathbb{R}} \: \text{span}\{Q_j \in \mathbb{Z}^k: \: j \in I_s \} \ . 
\]
Then the dimension of $\MM$ is given by
\begin{equation}\label{dimension_moduli_space}
\dim_\CC \MM \ = \   \max \: \{ D_s - d_s \,: \:   1 \leq s \leq N \}.
\end{equation}
Observe that a generic vector $\sigma$ inside the cone $\Delta$ does not lie in any of the proper subspaces of $\mathbb{R}^k$ spanned by proper subsets of weights $Q_j$. This implies that for generic $\sigma$ the integer $d_s$ is equal to $k$ for all $1 \leq s \leq N$. In particular, the maximum in \eqref{dimension_moduli_space} is attained for the full index set $I = \{1, \ldots, n\}$, so the complex dimension of the moduli space in the generic case is just $\dim_\CC V - k$. 

For such generic values of $\sigma$ all vortex solutions have a discrete stabilizer under gauge transformations, as follows from Lemma \ref{stabilizers}.  The points in the subset $V_\sigma \subseteq V$ also have discrete stabilizers under the $(\CC^\ast)^k$-action. This means that for generic $\sigma \in \Delta$ the moduli space $\MM$ is a toric orbifold. 
To guarantee smoothness of the moduli space we need a stronger assumption on the weights $Q_j$ of the torus action. Suppose that:

{\it
\noindent
{\bf (C1)} The vector $\sigma$ defined in \eqref{stability_vector2} lies in the cone $\Delta$, but does not lie in any of the proper subspaces of $\mathbb{R}^k$ spanned by proper subsets of weights $Q_j$.

\noindent
{\bf (C2)} 
Every subset of weights in $\{Q_1, \ldots, Q_n \}$ that spans $\mathbb{R}^k$ also generates the integer lattice ${\mathbb Z}^k$ in $\RR^k$.
}

\noindent
Then it follows from Lemma \ref{stabilizers} that all vortex solutions have trivial stabilizer under gauge transformations. The points in $V_\sigma \subseteq V$ also have trivial stabilizers under the $(\CC^\ast)^k$-action, and so with these assumptions $\MM$ is a smooth toric manifold.


In the smooth case, the representation of the vortex moduli space as a symplectic quotient allows us to define a natural set of cohomology classes in $H^2 (\MM ; {\mathbb Z})$. In fact, observing that $\nu^{-1}(0) \rightarrow \MM$ is a smooth principal $T^k$-bundle, we can consider the associated line bundle 
\begin{equation} \label{associated_line_bundles}
H_a\ =\  \nu^{-1}(0) \times_{U(1)^a} \CC\ \longrightarrow\ \MM \ ,
\end{equation}
where the notation means that $T^k$ acts on $\CC$ by simple multiplication of the $a$-th $U(1)$-factor inside $T^k$. This is the same as the line bundle $V_\sigma \times_{(\CC^\ast)^a} \CC \longrightarrow \MM$ associated with the principal $(\CC^\ast)^k$-bundle $V_\sigma \rightarrow \MM$.
 We then define
\begin{equation} \label{definition_eta1}
\eta_a \ := \ c_1 (H_a)  \quad \quad{\rm in} \quad H^2 (\MM ; {\mathbb Z}) \quad {\rm for} \ \ a= 1, \ldots , k .
\end{equation}
These cohomology classes are standard in toric geometry and are known to generate the cohomology ring of the toric quotient $\MM$.

\subsubsection*{Universal bundle}

Abelian GLSM  deal with connections on a principal bundle $P \longrightarrow M$ with gauge group $T^k$. If we regard the moduli space $\MM$ as the quotient $\mathcal{V} / \mathcal{G}$ of the space of vortex solutions by the group of gauge transformations $\mathcal{G} = \text{\rm Map}(M \rightarrow T^k)$, then the quotient 
\[
\mathcal{U} :=  (P \times \mathcal{V}) / \mathcal{G} \ \longrightarrow \ M \times \MM 
\]
is also a $T^k$-bundle. It is called the universal bundle. The decomposition 
$P = \times^k_{a=1} \, P^a$ as a product of circle bundles induces a similar decomposition $\mathcal{U} = \times^k_{a=1}\, \mathcal{U}^a$.

In this paragraph we will describe the topology of the universal bundle $\mathcal{U}$, which is determined by the Chern classes $c_1(\mathcal{U}^a)$. We assume that conditions (C1) and (C2) of the preceeding paragraph are true, so that the moduli space $\MM$ is a smooth toric manifold described by Proposition \ref{toric_moduli_space}. 

\begin{lemma}\label{u_bundle1}
Denote by $p_1$ and $p_2$ the projections from the product $M \times \mathcal{M}$ onto the first and second factor, respectively. Then the first Chern classes of the universal circle bundles are
\[
c_1(\mathcal{U}^a) \ = \  p_1^\ast \, c_1 (P^a) \ + \ p_2^\ast\, \eta_a  \ 
\]
in the cohomology $H^2 (M \times \MM , \mathbb{Z})$, where the classes $\eta_a$ were defined in \eqref{definition_eta1}.
\end{lemma}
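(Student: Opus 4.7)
The plan is to exploit the fact that, under assumptions (C1)--(C2), $\MM$ is a smooth compact toric manifold, hence simply connected with $H^1(\MM, \mathbb{Z}) = 0$. Since $M$ is also simply connected, the K\"unneth formula yields
\[
H^2(M \times \MM, \mathbb{Z}) \ \simeq \ H^2(M, \mathbb{Z}) \ \oplus\ H^2(\MM, \mathbb{Z}) \ ,
\]
so I can write $c_1(\mathcal{U}^a) = p_1^\ast \alpha_a + p_2^\ast \beta_a$ for unique classes $\alpha_a \in H^2(M, \mathbb{Z})$ and $\beta_a \in H^2(\MM, \mathbb{Z})$. It then suffices to identify $\alpha_a$ and $\beta_a$ separately by restricting $\mathcal{U}^a$ to the two kinds of slices $M \times \{[A, \phi]\}$ and $\{x\} \times \MM$. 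This mimics exactly the strategy used for Lemma \ref{u_bundle1}.

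For the first identification I would fix any vortex solution $(A, \phi)$ and observe that the definition $\mathcal{U}^a = (P^a \times \mathcal{V})/\mathcal{G}$ makes the map $p \mapsto [p, A, \phi]$ into a bundle isomorphism $P^a \to \mathcal{U}^a|_{M \times \{[A, \phi]\}}$, because the stabilizer of $(A, \phi)$ in $\mathcal{G}$ is trivial by Lemma \ref{stabilizers} together with condition (C2). Hence $\alpha_a = c_1(P^a)$.

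For the second identification I would use the symplectic quotient model of Proposition \ref{toric_moduli_space}. Fix the unique holomorphic structure on $P$, let $A_0$ denote its Chern connection, and, following the proof of Proposition \ref{toric_moduli_space}, represent each point of $\MM$ by a pair $(A_0, \phi)$ with $\phi \in V_\sigma$, uniquely up to the constant $(\CC^\ast)^k$-action on $V$. Restricting $\mathcal{U}^a$ to $\{x\} \times \MM$ then gives
\[
\mathcal{U}^a\big|_{\{x\} \times \MM} \ \simeq \ \bigl( P^a_x \times V_\sigma \bigr) \big/\, (\CC^\ast)^k \ ,
\]
where the constant $(\CC^\ast)^k$ acts on the fibre $P^a_x$ through its $a$-th factor. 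Under the identification \eqref{sympl-git-quotients}, namely $V_\sigma / (\CC^\ast)^k \simeq \nu^{-1}(0)/T^k$, this becomes the associated circle bundle $\nu^{-1}(0) \times_{U(1)^a} U(1)$, whose complexification is exactly the line bundle $H_a$ defined in \eqref{associated_line_bundles}. Therefore $\beta_a = c_1(H_a) = \eta_a$, which finishes the proof.

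The main obstacle I anticipate is the careful bookkeeping in the last step: one must match the principal $T^k$-bundle obtained by gauge-fixing vortex representatives $(A_0, \phi)$ over $\MM$ with the principal bundle $\nu^{-1}(0) \to \MM$ provided by the symplectic model, and verify that the constant $T^k$-action on $P^a_x$ (via the $a$-th circle factor) is intertwined with the $U(1)^a$-action used to build $H_a$ in \eqref{associated_line_bundles}. This is essentially a diagram chase through the identifications of Proposition \ref{toric_moduli_space}, but it is the place where the correct weight factors must be tracked.
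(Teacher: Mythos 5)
Your proposal is correct and follows essentially the same route as the paper: split $c_1(\mathcal{U}^a)$ via simple connectivity of both factors, identify the $M$-component by restricting to a slice $M\times\{[A,\phi]\}$ and using freeness of the gauge action, and identify the $\MM$-component by matching the restriction to $\{x\}\times\MM$ with the associated bundle $H_a$ from the symplectic/GIT quotient model. The "bookkeeping" you flag at the end is exactly what the paper formalizes by introducing the based gauge group $\mathcal{G}_p$ and the chain of identifications $\mathcal{V}/\mathcal{G}_p \simeq \mathcal{B}_\sigma^\ast/\mathcal{G}_p^{\CC} \simeq V_\sigma$, which is the rigorous version of your gauge-fixing to $(A_0,\phi)$ with residual constant $(\CC^\ast)^k$-action.
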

\begin{proof}
Both $M$ and the moduli space $\MM$ are simply connected, so the first Chern class of $\mathcal{U}^a$ is necessarily of the form
\[
c_1(\mathcal{U}^a) \ = \  p_1^\ast \, \gamma_M \ + \ p_2^\ast\, \gamma_\MM  
\] 
for some classes $\gamma_M$ in $H^2 (M , \mathbb{Z})$ and $\gamma_\MM$ in $H^2 (\MM , \mathbb{Z})$. The class $\gamma_M$ coincides with the first Chern class of the restriction of the bundle $\mathcal{U}^a$ to $M \times \{\rm point\}$. Choosing a particular vortex solution $(A, \phi)$, we have that
\[
\mathcal{U}^a \ |_{M \times \{ [A, \phi] \}} \ = \ \big(\, P^a \times [ (A, \phi)\cdot \mathcal{G}] \, \big) / \mathcal{G}  \ \simeq \  P^a \ , 
\]
where the last isomorphism follows from the fact that the $\mathcal{G}$-action on the space of vortex solutions is free (cf. Lemma \ref{stabilizers}). Thus $\gamma_M = c_1 (P^a)$. Similarly, the class $\gamma_\MM$ coincides with the first Chern class of the restriction of the bundle $\mathcal{U}^a$ to $\{\rm p\} \times \MM$, where $p$ is any point in $M$. To prove that this is just $\eta_a$, we will use an argument similar to the one in \cite{Bap2010}.  Start by considering the subgroup of gauge transformations
\[
\mathcal{G}_p \ := \ \big\{ g:\Sigma \rightarrow T^k \ {\rm such \ that \ } g(p)=(1, \ldots , 1) \ \big\} \ .
\label{4.8}
\]
It is clear that $\mathcal{G}$ splits as $\mathcal{G}_p \times T^k$. Denoting by $P_p$ the fibre of the principal bundle above the point $p$, we then have that
\begin{align}
\mathcal{U}^a \: |_{\MM \times \{ p\}} \ &= \ (\mathcal{V} \times  P_p )\:  /\:  \mathcal{G} \ = \ [(\mathcal{V} \times P_p) / \mathcal{G}_p ] \: / \: T^k  
\label{4.9} \\
&= \ [(\mathcal{V} / \mathcal{G}_p ) \times P_p] \: / \: T^k \ \simeq \ \mathcal{V} \: / \: \mathcal{G}_p \ ,  \nonumber
\end{align}
where in the last step we have used that the $T^k$-action on the fibre $P_p$ is free and transitive. So we conclude that the restriction of $\mathcal{U}^a$ to $\MM \times \{ p\}$ is isomorphic to the $T^k$-bundle $\mathcal{V} / \mathcal{G}_p \longrightarrow \MM$.
Consider now the space of pairs that satisfy the first and third vortex equations: 
\[
\mathcal{B} \ = \ \{  (A, \phi ) : \   \bar{\partial}_A \phi =0  \ \ {\rm and}  \ \  F_A^{0,2} = 0 \}  \ .
\]
Define a gauge invariant subset $\mathcal{B}_\sigma^\ast \subset \mathcal{B}$ by only taking the pairs $(A , \phi)$ such that the vector $\sigma$ is in the positive cone generated by $\{Q_j : j \in I_\phi \}$, i.e. the pairs such that $\phi \in V_\sigma$, using the notation in \eqref{stable_subset}. 
Observe firstly that, by Lemma \ref{stabilizers} and assumptions (C1) and (C2), the group $\mathcal{G}^\CC$ of complex gauge transformations acts freely on $\mathcal{B}_\sigma^\ast$. Secondly, Lemma \ref{necessary_condition} says that all vortex solutions are in this subset, which implies that  $\mathcal{G}^\CC \cdot \mathcal{V} \subset \mathcal{B}_\sigma^\ast$. Finally, Theorem \ref{Hitchin_Kobayashi} guarantees that 
$ \mathcal{B}_\sigma^\ast \subset \mathcal{G}^\CC \cdot \mathcal{V}$, and hence that 
\begin{equation} \label{stable_set}
\mathcal{B}_\sigma^\ast \ = \  \mathcal{G}^\CC \cdot \mathcal{V} \ .
\end{equation}
It follows that  the vortex moduli space $\MM = \mathcal{V} /  \mathcal{G} =  \mathcal{B}_\sigma^\ast  /  \mathcal{G}^\CC$ can be identified with a complex quotient. Moreover  
\[
 \mathcal{B}_\sigma^\ast  /  \mathcal{G}_p^\CC = \   \big [ (\mathcal{G}^\CC / \mathcal{G}_p^\CC ) \cdot \mathcal{V}  \big ] \, /  \, \mathcal{G}_p  \ = \ [ (\CC^\ast)^k \cdot \mathcal{V}  ] \, / \, \mathcal{G}_p
\]
as a principal $(\CC^\ast)^k$-bundle over the moduli space. But the complex quotient $ \mathcal{B}  /  \mathcal{G}_p^\CC$ is just the space of holomorphic sections of $\oplus_j L_j$ up to isomorphism, or, in other words, it is the vector space $V$ of \eqref{space_sections}. Clearly, the subset $ \mathcal{B}_\sigma^\ast  /  \mathcal{G}_p^\CC$ is then the corresponding subset $V_\sigma \subset V$ defined in \eqref{stable_subset}. This means that the associated line bundles $H_a \rightarrow \MM$ defined in \eqref{associated_line_bundles} are just 
\[
H_a \ = \ V_\sigma \times_{(\CC^\ast)^a} \CC \  = \  [ (\CC^\ast)^k \cdot \mathcal{V}  ] \, / \, \mathcal{G}_p \ \times_{(\CC^\ast)^a}\: \CC \ \simeq \   \mathcal{V} / \mathcal{G}_p \ \times_{(U(1))^a}\: \CC \  \ .
\]
Finally, using the isomorphism established in the first part of the proof, we conclude that
\[
H_a \ \simeq \ \mathcal{U}\, |_{\{p\} \times \MM} \times_{U(1)^a} \CC \ , 
\]
and hence that $c_1(H_a) = c_1 ( \mathcal{U}^a\, |_{\{p\} \times \MM}) = \gamma_\MM$.
\end{proof}

\subsection{Case of abelian varieties}

As in the case of simply connected manifolds, the easiest way to describe the vortex moduli space when $M$ is an abelian variety is to make use of Theorem \ref{Hitchin_Kobayashi}. This result says that the moduli space $\MM$ is isomorphic to the complex moduli space of pairs $(A, \phi)$ satisfying the first and third vortex equations and the additional condition:

{\it
\noindent
{\bf (1)} The vector $\sigma$ of \eqref{stability_vector2} is in the interior of the positive cone $\Delta_\phi \subseteq \RR^k$ generated by the weights $Q_j \in \mathbb{Z}^k$ such that $\phi_j \not\equiv 0$ .
}

\noindent
Here $A$ is a connection on the $k$-torus bundle $P \rightarrow M$ and $\phi$ is a section of the associated bundle $\oplus^n_{j=1} L_j$ described in Section 3.1. Now, a pair $(A, \phi)$ satisfies the first and third vortex equations iff $A$ defines a holomorphic structure on $P$ and the section $\phi$ is holomorphic with respect to the induced holomorphic structure on $\oplus^n_{j=1} L_j$. The set of holomorphic structures on $P$ is (non-canonically) isomorphic to the $k$-fold cartesian product  $\times^k \,  \text{Pic}^0 M$ of the Picard group. Identifying the set of holomorphic structures on each associated line bundle  $L_j = P \times_{\rho_j} \CC$ with $\text{Pic}^0 M$, the group multiplication
\[
m_{\rho_j} : \times^k \,  \text{Pic}^0 M  \ \longrightarrow \  \text{Pic}^0 M  \qquad \qquad
(g_1 , \ldots , g_k)\  \longmapsto    \  \prod_{a=1}^k {(g_a)^{Q_j^a}}
\]
corresponds to the operation of inducing a holomorphic structure from $P$ to $L_j$. This is true because the tensor product of line bundles corresponds to multiplication on the Picard group. Thus for a given choice $g= (g_1 , \ldots , g_k)$ of holomorphic structure on $P$, we are looking for sections of $L_j$ with holomorphic structure $m_{\rho_j} (g)$, and from the discussion of Section 2.3 these define a vector space isomorphic to $(\hat{L_j})_{m_{\rho_j} (g)}$, the fibre of the Fourier-Mukai transform $\hat{L_j} \rightarrow \text{ Pic}^0 M$ above the point $m_{\rho_j} (g)$. Letting the point $g \in \times^k \,  \text{Pic}^0 M$ vary, we conclude that the space of pairs ``holomorphic structure on $P$ plus holomorphic section of $\oplus^n_{j=1} L_j$"  is isomorphic to the total space of the vector bundle
\begin{equation} \label{vector_bundle}
V \ := \ \bigoplus_{j=1}^{n} \ m_{\rho_j}^\ast  \, \hat{L_j} \ \longrightarrow \ \times^k \,  \text{Pic}^0 M \ .
\end{equation}
As in the case of simply connected manifolds, some of the points of $V$ still represent holomorphic sections related to each other by constant $(\CC^\ast)^k$-gauge transformations. In fact, the action $\rho_j$ of $(\CC^\ast)^k$ on the components $\phi_j$ of a section translates into scalar multiplication with weight $Q_j \in \mathbb{Z}^k$ on the fibres of each summand $\ m_{\rho_j}^\ast  \, \hat{L_j}$ of the bundle. This defines a global action of $(\CC^\ast)^k$ on the total space of the vector bundle $V$. The orbits of this action consist of pairs related by constant gauge transformations, and hence must be identified with a single point in the moduli space. So we must quotient $V$ by the $(\CC^\ast)^k$-action. Finally, Theorem \ref{Hitchin_Kobayashi} says that we should not quotient the whole $V$, but only the invariant subset $V_\sigma \subset V$ defined by the holomorphic  sections  $\phi$ that satisfy condition (1) above.
We are thus lead to the following result.
\begin{proposition} \label{toric_ms_abelian_varieties}
Let $M$ be an abelian variety and suppose that condition \eqref{integrability} is satisfied. Then the moduli space of vortex solutions of the GLSM is isomorphic to the complex quotient 
\begin{equation} \label{moduli_space_glsm2}
\MM \ \simeq \ V_\sigma  \; \big/  \; (\CC^\ast)^k  \ .
\end{equation}
In particular there is a natural bundle $\MM \longrightarrow \times^k \,  \text{Pic}^0 M$ whose typical fibre is a toric orbifold.
\end{proposition}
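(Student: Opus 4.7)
The strategy is to combine Theorem \ref{Hitchin_Kobayashi} with the parametrization of holomorphic data that was sketched informally in the paragraph preceding the statement, and then carefully identify which portion of the $(\CC^\ast)^k$-gauge symmetry has been quotiented at each step. Because the discussion leading up to the proposition essentially describes the target of the identification, the work is to promote that description to a genuine bijection.

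First I would apply Theorem \ref{Hitchin_Kobayashi} to replace $\MM$ with the set of $(\CC^\ast)^k$-gauge equivalence classes of pairs $(A,\phi)$ satisfying only $\bar{\partial}_A\phi=0$ and $F_A^{0,2}=0$, subject to condition (1) above the statement. Next, using \eqref{integrability}, I would parametrize holomorphic structures on the torus bundle $P=\times_a P^a$ by $\times^k\,\text{Pic}^0 M$ (after fixing a reference structure), and observe that the induced holomorphic structure on $L_j=P\times_{\rho_j}\CC$ is given by the group homomorphism $m_{\rho_j}$ since tensor products of line bundles correspond to multiplication in the Picard group. For each $g\in\times^k\,\text{Pic}^0 M$, the space of holomorphic sections of $L_j$ with the induced structure is, by construction of the Fourier--Mukai transform recalled in Section 2.3, the fibre $(\hat{L_j})_{m_{\rho_j}(g)}$; letting $g$ vary assembles these into the vector bundle $V$ of \eqref{vector_bundle} whose total space is in bijection with isomorphism classes of pairs $(A,\phi)$ satisfying equations (1) and (3).

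Then I would identify the residual gauge action: a constant element of $(\CC^\ast)^k$ fixes every holomorphic structure and acts on $\phi^j$, hence on the fibre $(\hat{L_j})_{m_{\rho_j}(g)}$, by scalar multiplication with weight $Q_j$. This is precisely the action of $(\CC^\ast)^k$ on $V$ that appears in the statement. The stable locus $V_\sigma\subseteq V$ cut out by condition (1) is $(\CC^\ast)^k$-invariant, and by Theorem \ref{Hitchin_Kobayashi} the quotient $V_\sigma/(\CC^\ast)^k$ is exactly the set of full vortex solutions modulo $T^k$-gauge. The natural projection $V\to\times^k\,\text{Pic}^0 M$ is equivariant and descends to the announced map $\MM\to\times^k\,\text{Pic}^0 M$, whose fibre over $g$ is the GIT/symplectic quotient of the finite-dimensional weighted representation $\oplus_j(\hat{L_j})_{m_{\rho_j}(g)}$ by $(\CC^\ast)^k$ at the level determined by $\sigma$, and is therefore a toric orbifold (smooth under the analogues of (C1)--(C2) from Section 3.2).

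The main technical obstacle is the bookkeeping of the complex gauge group: one must check that passing from the space of integrable connections to $\times^k\,\text{Pic}^0 M$ correctly absorbs the \emph{non-constant} $(\CC^\ast)^k$-gauge transformations (modulo the $T^k$-subgroup), so that the remaining action on $V$ is genuinely given by the constant subgroup acting via the weights $Q_j$, with no double counting and no missed stabilizers beyond those governed by Lemma \ref{stabilizers}. Once this is verified, the identification \eqref{moduli_space_glsm2} is essentially formal, in complete analogy with the argument for Proposition \ref{toric_moduli_space}, with $H^0(M,L_j)$ replaced everywhere by the fibres of the Fourier--Mukai transform $\hat{L_j}$.
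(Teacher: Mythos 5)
Your proposal is correct and follows essentially the same route as the paper: the paper's "proof" of this proposition is precisely the informal discussion preceding its statement (apply Theorem \ref{Hitchin_Kobayashi}, parametrize holomorphic structures on $P$ by $\times^k\,\text{Pic}^0 M$, identify holomorphic sections with fibres of the pulled-back Fourier--Mukai transforms to build $V$, and quotient the stable locus $V_\sigma$ by the residual constant $(\CC^\ast)^k$-action with weights $Q_j$). The gauge-bookkeeping point you flag at the end is exactly the step the paper handles (somewhat tersely) via Lemma \ref{stabilizers} and the uniqueness clause of Theorem \ref{Hitchin_Kobayashi}, in analogy with the proof of Proposition \ref{toric_moduli_space}.
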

\begin{remark}
As mentioned above, the fibres of the the Fourier-Mukai transform $\hat{L_j}$ are isomorphic to a space of sections of $L_j = P \times_{\rho_j} \CC$. So a choice of hermitian metric on $L_j$ will induce a $L^2$ hermitian metric on $\hat{L_j}$. Doing this for every $j$ we get a hermitian metric on the vector bundle $V$. Using this metric we can consider the moment map $\nu: V \rightarrow \mathbb{R}^k$ defined by a formula  analogous to \eqref{m_map}. The moduli space $\MM$ can then be represented as a symplectic  quotient $\nu^{-1}(0) / T^k$, just as in the case of a simply connected $M$.
\end{remark}
\noindent
Note in passing that it is perfectly possible that the bundles $L_j$  do not have non-zero holomorphic sections. If this happens for all $j$, then the fibre of the vector bundle $V$ is zero-dimensional, i.e. the fibre is just a point. In this case the subset $V_\sigma$ will be empty whenever $\sigma \neq 0$, since condition (1) above cannot be satisfied, and hence also $\MM$ will be empty. If $\sigma =0$ the subset $V_\sigma$ will be equal to V, and hence the moduli space $\MM$ will be isomorphic to $\times^k \,  \text{Pic}^0 M$.

If the vector $\sigma$ has the generic values described in condition (C1) of Section 3.2, the $(\CC^\ast)^k$-quotient of Proposition \ref{toric_ms_abelian_varieties} will have complex dimension equal to $\dim_\CC (\text{fibre}\,V) + k (\dim_\CC M - 1)$. If condition (C2) is also satisfied, the moduli space $\MM$ will be a smooth manifold. In this case the representation of the vortex moduli space as a toric quotient allows us to define a natural set of cohomology classes in $H^2 (\MM ; {\mathbb Z})$. Just as in the case of simply connected $M$, described at the end of Section 3.2, we can consider the associated line bundles $H_a = V_\sigma \times_{(\CC^\ast)^a} \CC \longrightarrow \MM$  and define
\begin{equation} \label{definition_eta2}
\eta_a \ := \ c_1 (H_a)  \quad \quad{\rm in} \ \  H^2 (\MM ; {\mathbb Z}) \ \   {\rm for} \ \ a= 1, \ldots , k .
\end{equation}
These cohomology classes generate the cohomology ring of the fibres of $\MM \longrightarrow \times^k \,  \text{Pic}^0 M$. By the Leray-Hirsch theorem, the cohomology ring of $\MM$ is generated by the $\eta_a$'s and the pullbacks of classes from the base $ \times^k \,  \text{Pic}^0 M$ of the bundle.

\subsubsection*{Universal bundle}

In this paragraph we compute the Chern classes of the universal bundle $\mathcal{U} \rightarrow \ M \times \mathcal{M}$ in the case where $M$ is an abelian variety. The notation and assumptions are the same as in the case of simply connected $M$. In particular, we assume that conditions (C1) and (C2) of Section 3.2 are valid, so that the moduli space $\MM$ given by Proposition \ref{toric_ms_abelian_varieties} is a smooth manifold.

\begin{lemma}\label{u_bundle2}
Denote by $p_1$ and $p_2$ the projection from the product $M \times \mathcal{M}$ onto the first and second factor, respectively. Then the first Chern classes of the universal circle bundles are
\[
c_1(\mathcal{U}^a) \ = \  p_1^\ast \, c_1 (P^a) \ + \ p_2^\ast\, \eta_a  \ + \ (\text{\rm id} \times \text{\rm proj}_a)^\ast\, c_1(\mathcal{P})
\]
in the cohomology $H^2 (M \times \MM , \mathbb{Z})$, where the classes $\eta_a$ were defined in \eqref{definition_eta2}.
\end{lemma}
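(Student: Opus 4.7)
The plan is to adapt the proof of Lemma~\ref{u_bundle_av}, applying it factor-by-factor to each universal circle bundle $\mathcal{U}^a$, and then to import the second half of the proof of Lemma~\ref{u_bundle1} to identify the resulting class on $\MM$. I fix a reference holomorphic structure on each $P^a$ (as in the proof of Proposition~\ref{toric_ms_abelian_varieties}), so that $\mathcal{U}^a$ acquires a natural holomorphic structure on $M \times \MM$ and we may work throughout in the holomorphic category.

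The first step is to form the line bundle $\mathcal{U}^a \otimes p_1^\ast (P^a)^{-1}$ on $M \times \MM$ and check that its restriction to every slice $M \times \{[A,\phi]\}$ has vanishing first Chern class. This follows from the first part of the proof of Lemma~\ref{u_bundle1}: under assumptions (C1)-(C2) the $\mathcal{G}$-action is free (Lemma~\ref{stabilizers}), which yields $\mathcal{U}^a|_{M \times \{[A,\phi]\}} \simeq P^a$ as smooth circle bundles, so the tensor product is topologically trivial along slices. The universal property of the Poincar\'e bundle, invoked exactly as in Lemma~\ref{u_bundle_av}, then produces a unique holomorphic map $f_a : \MM \to \text{Pic}^0 M$ and a line bundle $\mathcal{N}_a \to \MM$ with
\[
\mathcal{U}^a \otimes p_1^\ast (P^a)^{-1} \ \simeq \ (\text{id} \times f_a)^\ast \mathcal{P} \ \otimes \ p_2^\ast \mathcal{N}_a .
\]

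The map $f_a$ is characterized by its values on points: by construction it sends $[A,\phi]$ to the holomorphic structure on the $a$-th circle factor $P^a$ induced by the $a$-th component of $A$, taken relative to the fixed reference structure. This is precisely the $a$-th component $\text{proj}_a$ of the natural projection $\MM \to \times^k \text{Pic}^0 M$ of Proposition~\ref{toric_ms_abelian_varieties}. To pin down $\mathcal{N}_a$, I restrict the displayed isomorphism to $\{0\} \times \MM$, where $0$ is the identity of the abelian variety $M$; the normalization $\mathcal{P}|_{\{0\} \times \text{Pic}^0 M} \simeq \mathcal{O}$ then gives $\mathcal{N}_a \simeq \mathcal{U}^a|_{\{0\} \times \MM}$. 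The second half of the proof of Lemma~\ref{u_bundle1} (splitting $\mathcal{G} = \mathcal{G}_p \times T^k$, using the identification \eqref{stable_set}, and comparing with the definition \eqref{associated_line_bundles} of $H_a$) identifies this restriction with the hyperplane bundle $H_a$, so $c_1(\mathcal{N}_a) = \eta_a$. Taking first Chern classes of the main isomorphism yields the stated formula.

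The only step requiring scrutiny is the transfer of the $\mathcal{G}_p$-argument of Lemma~\ref{u_bundle1} to the present context, since that lemma was proved assuming $M$ simply connected. However, that argument relies solely on the gauge-theoretic description of the universal bundle, the freeness of the $\mathcal{G}^{\CC}$-action guaranteed by (C1)-(C2) and Lemma~\ref{stabilizers}, and the presentation of $\MM$ as a $\mathcal{G}^{\CC}$-quotient via Theorem~\ref{Hitchin_Kobayashi}. All of these ingredients are equally available when $M$ is an abelian variety, so the main obstacle is essentially bookkeeping rather than any substantive new input.
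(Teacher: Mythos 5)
Your proposal is correct and follows essentially the same route as the paper: fix a reference holomorphic structure $\tilde{P}^a$, observe that $\mathcal{U}^a\otimes p_1^\ast(\tilde{P}^a)^{-1}$ has trivial first Chern class on every slice $M\times\{[A,\phi]\}$, invoke the universal property of the Poincar\'e bundle to get $f_a$ and $\mathcal{N}_a$, identify $f_a$ with $\text{proj}_a$ by evaluating on slices, and identify $\mathcal{N}_a$ with $H_a$ via the normalization of $\mathcal{P}$ at $\{0\}\times\text{Pic}^0 M$ together with the $\mathcal{G}_p$-argument of Lemma \ref{u_bundle1}. The paper likewise treats the transfer of that last argument from the simply connected case as routine, so nothing is missing.
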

\begin{proof}
In this proof we will denote by the same symbol both the circle bundle $\mathcal{U}^a \rightarrow M \times \MM$ and the associated complex line bundle over the same base;  the same applies to the circle bundle $P^a \rightarrow M$. As was seen above, there are $k$  projections
\[
\text{\rm proj}_a \ : \ \MM \ \longrightarrow \ \times^k\, \text{\rm Pic}^0 M \ \longrightarrow \ \text{\rm Pic}^0 M \ ,
\] 
where the last arrow is the $a$-th projection from the cartesian product. In terms of vortex solutions, $\text{\rm proj}_a$ takes an equivalence class $[A, \phi]$ to the holomorphic structure on $P^a$ determined by the $a$-th component of the integrable $T^k$-connection $A$, which we will denote by $(P^a)_A$. Taking the tensor product $(P^a)_A \otimes (\tilde{P^a} )^{-1}$, where $\tilde{P^a}$ denotes a fixed holomorphic structure $P^a$, we get  and element of $\text{\rm Pic}^0 M$, as desired.

In terms of vortex solutions, the natural holomorphic structure on the line bundle $\mathcal{U}^a$ can be described in the following way. The proof of Lemma \ref{u_bundle1} says that $\mathcal{U}^a$ can be regarded as a complex quotient
\[
\mathcal{U}^a := (P^a \times \mathcal{V}) / \mathcal{G} \ = \ (P^a \times \mathcal{B}^\ast) / \mathcal{G_\CC} \ .
\]
The space $P^a \times \mathcal{B}^\ast$ has a natural complex structure determined by the one on $\mathcal{B}^\ast$ and by the rule that, on the tangent space $T_{p} P^a \oplus T_{[A, \phi]} \mathcal{B}^\ast$ at a given point, the complex structure on the first summand is the one of the holomorphic bundle $(P^a)_A$. This complex structure is preserved by complex gauge transformations, and hence induces the complex structure on the quotient $\mathcal{U}^a$.
Now, an argument just like in the proof of Lemma \ref{u_bundle1} says that the restriction of $\mathcal{U}^a$ to a submanifold of the form $M \times \{[ A, \phi]\}$ is 
\[
\mathcal{U}^a \ |_{M \times \{[ A, \phi]} \  = \ \big(\, (P^a)_A \, \times\, [(A, \phi) \cdot G_\CC] \,  \big) / \mathcal{G_\CC} \ \simeq \ (P^a)_A \ .
\]
In particular the restriction of $\mathcal{U}^a \otimes p_1^\ast (\tilde{P}^a)^{-1}$ to the same submanifold has trivial first Chern class. The universality property of the Poincar\'e bundle $\mathcal{P} \rightarrow M \times  \text{\rm Pic}^0 M$ then guarantees that the existence of a unique holomorphic map $f_a: \MM \rightarrow \text{\rm Pic}^0 M$ such that the bundle $\mathcal{U}^a \otimes p_1^\ast (\tilde{P}^a)^{-1}$ is isomorphic to $(\text{\rm id} \times f_a)^\ast \mathcal{P} \, \otimes \, p_2^\ast \, \mathcal{N}_a$, where $\mathcal{N}_a$ is a circle bundle over $\MM$. Moreover, the map $f_a$ is determined by 
\[
f_a \big( [A, \phi] \big) \ := \  \mathcal{U}^a \otimes p_1^\ast (\tilde{P}^a)^{-1} \ |_{M \times \{[ A, \phi]\}} \ = \ (P^a)_A \,\otimes\, (\tilde{P^a} )^{-1} \ = \  \text{\rm proj}_a \big( [A, \phi] \big)  \ .
\]
So we have that
\[
c_1 (\mathcal{U}^a) \ - \  p_1^\ast\, c_1(\tilde{P}^a) \ = \   (\text{\rm id} \times \text{\rm proj}_a)^\ast\, c_1(\mathcal{P}) \ + \   p_2^\ast \, c_1 (\mathcal{N}_a) \ .
\]
The normalization of $\mathcal{P}$ is such that its restriction to the subset $\{0 \} \times \text{Pic}^{0} M$ is trivial, so the line bundle $\mathcal{N}_a$ coincides with the restriction of $\mathcal{U}^a$ to the submanifold $\{ 0\} \times \mathcal{M}$. 
An argument similar to one in the proof of Lemma \ref{u_bundle1} then says that this last restriction is isomorphic to the complex line bundle $H_a \rightarrow \mathcal{M}$. This justifies the formula for $c_1(\mathcal{U}^a)$. 
\end{proof}

\section{K\"ahler class and volume of the $L^2$-metric}

\subsection{Line bundles over simply connected manifolds}

In Section 2 we have seen that the moduli space $\MM$ of abelian vortices on a positive line bundle $L \rightarrow M$ is a projective space whenever the base manifold is simply connected. Since the cohomology ring of projective spaces is very simple, in this instance it is easy to calculate the volume of $\MM$ once we know the K\"ahler class of the $L^2$-metric  \eqref{vortexmetric}. This class can be computed using a formula of Perutz \cite{Perutz}, as generalized in \cite{BS, Bap2010}, that expresses the K\"ahler form on $\MM$ in terms of the curvature $F_{\A}$ of the universal connection on the universal line bundle $\mathcal{L} \rightarrow \MM \times M$. The formula says that the K\"ahler form $\omega_\MM$ of the $L^2$-metric is given by the fibre integral
\begin{equation} \label{L2_form}
\omega_\MM \ = \ \int_M \frac{i \tau}{2 \, m!} \, F_{\A} \wedge \omega_M^m \ + \ \frac{1}{4 e^2 (m-1)!}\, F_{\A} \wedge F_{\A} \wedge \omega_M^{m-1} \ .
\end{equation}
(In passing, we note the remarkable similarity between the integral above and the integral in \eqref{energy_vortex}, a similarity for which the author cannot offer a good explanation.) 
Taking the cohomology class on both sides of the equation, we get that
\begin{equation}\label{L2_form_coh}
\big[ \omega_\MM \big] \ = \ \int_M \ \frac{\pi\, \tau}{m!} \, c_1(\LL) \wedge \big[ \omega_M \big]^m \ - \ \frac{\pi^2}{e^2 (m-1)!}\,   c_1(\LL) \wedge  c_1(\LL)  \wedge \big[ \omega_M \big]^{m-1} \ 
\end{equation}
in $H^2(\MM , \mathbb{Z})$. But we know what the Chern class $c_1 ({\mathcal L})$ is in the case of simply connected manifolds: it is given by Lemma \ref{u_bundle1}.
Using that formula, the fact that $\omega_M^m / m!$ is the volume form on $M$, and the fact that, by Lefschetz's decomposition,
\[
c_1(L) \wedge [\omega_M]^{m-1}\  =\  c_1(L)^\parallel \wedge [\omega_M]^{m-1} \ ,
\]
the fibre integration in \eqref{L2_form_coh} can be performed to yield 
\begin{equation} \label{k_class1}
\big[ \omega_\MM \big] \ = \  \pi \, \sigma \, \, c_1 (H) \ ,
\end{equation}
where $\sigma$ is the stability parameter \eqref{stability1} and $H \rightarrow \MM$ is the hyperplane bundle over projective space.  Observe how $\sigma$ effectively parametrizes the K\"ahler class of the $L^2$-metric on the moduli space.
So we get:
\begin{proposition}
Let $L\rightarrow M$ be a positive line bundle over a $m$-dimensional, simply connected, compact K\"ahler manifold. If condition \eqref{stability1} is satisfied, the volume of the $L^2$-metric on the vortex moduli space is 
\begin{equation*}
\text{\rm Vol}\, (\MM , \omega_\MM) \ = \ \frac{(\pi \, \sigma)^{\, r - 1}}{\big( r -1 \big)!} \ , 
\end{equation*}
where $\sigma$ is the positive number in \eqref{stability1} and $r$ is the complex dimension of $H^0(M, L)$. The total scalar curvature of the moduli space is 
\begin{equation} \label{scalar_curvature}
\int_\MM  s(\omega_\MM) \ {\rm vol}_\MM \ = \  \frac{2 \pi\, r\,  (r-1)}{\big[(r-1)! \big]^{1/(r-1)}} \ \big( \text{\rm Vol}\, \MM \big)^{(r-2)/(r-1)} \ .
\end{equation}
\end{proposition}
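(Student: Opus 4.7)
The plan is to exploit the two facts already assembled in the excerpt: Proposition \ref{modulispace1} identifies $\MM$ as a complex manifold with the projective space $\mathbb{P}(H^0(M;L))$, and formula \eqref{k_class1} gives the Kähler class of the $L^2$-metric as $[\omega_\MM] = \pi\sigma\, \eta$, where $\eta = c_1(H)$ is the hyperplane class. Since both the volume and the total scalar curvature are cohomological integrals on $\MM$, all that remains is bookkeeping on $\mathbb{CP}^{r-1}$.

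For the volume, I would write
\[
\text{Vol}(\MM,\omega_\MM) \ = \ \int_\MM \frac{\omega_\MM^{\,r-1}}{(r-1)!} \ = \ \frac{(\pi\sigma)^{r-1}}{(r-1)!}\int_{\mathbb{CP}^{\,r-1}} \eta^{\,r-1}
\]
and invoke the standard evaluation $\int_{\mathbb{CP}^{r-1}} \eta^{r-1}=1$. This gives the first formula instantly. The only assumption I use here is that $[\omega_\MM]$ is a positive multiple of $\eta$, i.e.\ that $\sigma>0$, which is ensured by the stability condition \eqref{stability1}.

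For the total scalar curvature, I would apply the standard identity on a compact Kähler $n$-manifold
\[
\int_\MM s(\omega_\MM)\, {\rm vol}_\MM \ = \ 2\pi\, \int_\MM c_1(\MM) \wedge \frac{\omega_\MM^{\,n-1}}{(n-1)!}\,,
\]
(with the convention for $s$ used throughout the paper), with $n=r-1$. Using $c_1(\mathbb{CP}^{r-1}) = r\,\eta$ together with $[\omega_\MM]=\pi\sigma\,\eta$ yields
\[
\int_\MM s(\omega_\MM)\, {\rm vol}_\MM \ = \ 2\pi\, r\, \frac{(\pi\sigma)^{r-2}}{(r-2)!}\,,
\]
and the final step is to re-express this as a function of the volume. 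Raising the volume formula to the power $(r-2)/(r-1)$ gives $(\pi\sigma)^{r-2} = [(r-1)!]^{(r-2)/(r-1)}\,(\text{Vol}\,\MM)^{(r-2)/(r-1)}$; inserting this and simplifying $(r-1)/(r-2)! = r(r-1)/(r-1)!$ produces the stated expression
\[
\int_\MM s(\omega_\MM)\,{\rm vol}_\MM \ = \ \frac{2\pi\, r\,(r-1)}{[(r-1)!]^{1/(r-1)}}\, (\text{Vol}\,\MM)^{(r-2)/(r-1)}\,.
\]

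There is no real obstacle: the only thing to be careful about is the normalization convention for the scalar curvature (some authors use $s = \mathrm{tr}_\omega \rho$, others $s=2\,\mathrm{tr}_\omega \rho$), which fixes the numerical prefactor in the Chern-class formula and must be consistent with the one used implicitly in \eqref{scalar_curvature}. Once that is pinned down, everything reduces to intersection numbers on $\mathbb{CP}^{r-1}$.
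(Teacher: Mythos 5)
Your proposal is correct and is essentially identical to the paper's own proof: both compute the volume from $[\omega_\MM]=\pi\sigma\,c_1(H)$ via $\mathrm{Vol}=\frac{1}{(\dim\MM)!}\int_\MM[\omega_\MM]^{\dim\MM}$, and both obtain the total scalar curvature from the standard identity $\int_\MM s\,\mathrm{vol}_\MM=\frac{2\pi}{(\dim\MM-1)!}\int_\MM c_1(\MM)\wedge[\omega_\MM]^{\dim\MM-1}$ together with $c_1(\CC\mathbb{P}^{r-1})=r\,c_1(H)$. The algebra re-expressing $(\pi\sigma)^{r-2}$ in terms of $\mathrm{Vol}\,\MM$ checks out.
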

\begin{proof}
Since we already know the K\"ahler class on the moduli space $\MM \simeq \CC \mathbb{P}^{\, r(L) -1}$, the total volume of $(\MM , \omega_\MM)$ follows directly from the standard formula
\[
{\rm Vol}\,  \MM \ = \ \frac{1}{(\dim \MM)!} \int_\MM  [\omega_\MM]^{\dim \MM} \ .
\]
Similarly, the total scalar curvature follows from the equally standard formula
\[
\int_\MM  s(\omega_\MM) \ {\rm vol}_\MM \ = \ \frac{2\pi}{(\dim \MM -1)!} \int_\MM c_1(\MM) \wedge [\omega_\MM]^{\dim \MM -1} \  , 
\]
and the observation that for projective space $c_1(\MM) = (\dim \MM +1) \cdot c_1 (H)$.
\end{proof}

\subsubsection*{Observations}

\noindent
{\bf (1)} When $M$ is Riemann surface the volume and total scalar curvature of the vortex moduli space were computed in \cite{MN, Bap2010}.

$\ $

\noindent
{\bf (2)} When $M$ is a compact and simply connected K\"ahler manifold with $H^2(M; \mathbb{Z}) \simeq  \mathbb{Z}$, it is possible to express the parameter $\sigma$, and hence ${\rm Vol}\,  \MM$,  solely in terms of $\text{\rm Vol}\, M$, with no explicit reference to the K\"ahler class $[\omega_M]$. Using the notation of Example (2) in Section 2.2, we can write
\[
\frac{c_1(L)^{\parallel}}{[\omega_M]} \ =   \frac{c_1(L)}{[\omega_M]} \ = \ \frac{d}{\big( t_M^{-1}\, m! \, \text{\rm Vol}\, M \big)^{1/m}} \ ,
\]
where $d$ is the degree of $L$, $m$ is the complex dimension of $M$, and $t_M$ is the positive topological number defined by
\[
t_M \ := \ \int_M c_1(E)^m \ .
\]
Observe that when $M$ is a projective space, the number $t_M$ is equal to 1. When $M$ is the complex  Grassmannian, the generator $E$ of the Picard group is the pullback by a Pl\"ucker embedding $\text{Gr} (n, k) \rightarrow \CC \mathbb{P}^r$ of the hyperplane bundle over projective space. It is then easy to check that the topological number $t_M$ coincides the degree of the Grassmannian as a projective variety in $\CC \mathbb{P}^r$, which is known to be \cite[XIV.7]{Hodge-Pedoe}
\[
t_{\text{Gr} (n, k)} \ = \  \big(  k (n-k) \big)! \, \prod_{j =1}^k \frac{(j-1)!}{(n-k + j -1)!} \ .
\]
$\ $

\noindent
{\bf (3)}  Let $M$ be the Hirzebruch surface $F_k = \mathbb{P}(\mathcal{O}(-k) \oplus \CC)$. We will use the notation of Example (3) of Section 2.2. Consider the line bundle $L_{a,b} \longrightarrow F_k$ whose first Chern class is $a[C] + b [F]$ and write the K\"ahler class of the surface as $[\omega_M] = \lambda [F] + \delta [C]$. Then, using the standard intersection numbers of the divisors $[C]$ and $[F]$ (see ch.V.2 of \cite{Hartshorne}), we get that $\text{\rm Vol}\, M\, =\,  \delta \, (2 \lambda - k \delta ) /2$ and that
\begin{equation*}
 \frac{c_1(L_{a,b})^{\parallel}}{[\omega_M]}\, \,  \text{\rm Vol}\, M \ = \ \frac{1 }{2} \, \Big(a \lambda + b \delta (1 - k)  \Big) \ .
\end{equation*}

\subsection{Line bundles over abelian varieties}

When $M$ is an abelian variety, the moduli space $\MM$ is a projective bundle over the dual variety $\hat{M} = \text{\rm Pic}^0 M$, as was seen in Section 2.2. The K\"ahler class $[\omega_\MM]$ of the $L^2$-metric on the moduli space is given by formula \eqref{L2_form_coh}, just as in the case of simply connected manifolds. The only difference is that the first Chern class $c_1 (\mathcal{L})$ of the universal  bundle is now given by Lemma \ref{u_bundle_av}. Using this lemma, it is relatively straightforward to perform the fibre integral in \eqref{L2_form_coh} and hence obtain a formula for $[\omega_\MM]$.

\begin{proposition}
Let $L \rightarrow M$ be a positive line bundle over an $m$-dimensional abelian variety. If condition \eqref{stability1} is satisfied, the K\"ahler class of the $L^2$-metric on the vortex moduli space is
\begin{equation} \label{k_class2}
\big[ \omega_\MM \big] \ = \ \pi \, \sigma \: c_1 (H) \ - \ \frac{2\pi^2}{e^2} \, \text{\rm proj}^\ast \, \mathcal{F} \Big( \frac{[\omega_M^{m-1}]}{(m-1)!} \Big) \ ,
\end{equation} 
where $\sigma$ is the stability parameter defined in \eqref{stability1}, $H \rightarrow \MM$ is the hyperplane bundle in \eqref{eta_abelian_variety}, $\text{\rm proj}$ is the natural projection $\MM \rightarrow \hat{M}$, and $\mathcal{F}:  H^{2m-2}(M; \mathbb{R}) \rightarrow H^{2}(\hat{M}; \mathbb{R}) $ denotes the cohomological Fourier-Mukai transform.
\end{proposition}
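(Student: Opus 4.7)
The strategy is to substitute the expression for $c_1(\mathcal{L})$ furnished by Lemma \ref{u_bundle_av} into the cohomological formula \eqref{L2_form_coh} and perform the fibre integration over $M$ term by term. Write $c_1(\mathcal{L}) = A + B + C$ with $A := p_1^\ast c_1(L)$, $B := p_2^\ast \eta$ and $C := (\text{id} \times \text{proj})^\ast c_1(\mathcal{P})$. These three summands have bidegrees $(2,0)$, $(0,2)$ and $(1,1)$ respectively with respect to the factors $M$ and $\MM$ of the product. Fibre integration $\int_M$ produces a nonzero 2-form on $\MM$ only from integrands of bidegree $(2m, 2)$, so a simple degree count eliminates most of the terms arising from the expansion.

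In the linear part $\int_M c_1(\mathcal{L}) \wedge [\omega_M]^m$ only the $B$-contribution survives, yielding $m!\,(\text{Vol}\,M)\,\eta$; this gives a linear contribution $\pi\tau\,(\text{Vol}\,M)\,\eta$. Expanding the quadratic part as $(A+B+C)^2$ and wedging with $[\omega_M]^{m-1}$, the same degree argument kills $A^2$, $B^2$, $2AC$ and $2BC$, leaving only $2AB$ and $C^2$. Using the Lefschetz identity $c_1(L)\wedge[\omega_M]^{m-1} = (c_1(L)^\parallel/[\omega_M])\,[\omega_M]^m$, the $2AB$ term contributes a multiple of $\eta\cdot(c_1(L)^\parallel/[\omega_M])\,\text{Vol}\,M$. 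Adding this to the linear contribution, using $\eta = c_1(H)$ and the definition \eqref{stability1} of $\sigma$, the combined pieces collapse neatly to $\pi\sigma\,c_1(H)$.

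The remaining contribution comes from $C^2$. By the commutativity of fibre integration with pullback under the base-change diagram induced by $\text{id}\times\text{proj}: M\times\MM\to M\times\hat{M}$, one has
\[
\int_M C^2\wedge p_1^\ast [\omega_M]^{m-1} \;=\; \text{proj}^\ast\,\Big(\int_M c_1(\mathcal{P})^2 \wedge p_1^\ast [\omega_M]^{m-1}\Big),
\]
where the inner integration is now on $M\times\hat{M}$. By the definition $\mathcal{F}(\gamma) = p_{2\ast}(\text{ch}(\mathcal{P})\wedge p_1^\ast\gamma)$ of the cohomological Fourier-Mukai transform — the same transform appearing in Lemma \ref{chern_class_transform} — and a degree comparison showing that only the $k=2$ term of $e^{c_1(\mathcal{P})}$ contributes to the component of $\mathcal{F}([\omega_M^{m-1}])$ in $H^2(\hat{M};\RR)$, the inner integral equals $2\,\mathcal{F}([\omega_M^{m-1}])$. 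Multiplying by the coefficient $-\pi^2/(e^2(m-1)!)$ from \eqref{L2_form_coh} produces exactly $-(2\pi^2/e^2)\,\text{proj}^\ast\mathcal{F}([\omega_M^{m-1}]/(m-1)!)$, completing the formula. The main obstacle is conceptual rather than computational — recognizing the Fourier-Mukai transform hidden inside the $C^2$ contribution — while the rest is careful bookkeeping of signs, factorials and bidegrees.
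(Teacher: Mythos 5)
Your proposal is correct and follows essentially the same route as the paper: substitute the formula for $c_1(\mathcal{L})$ from Lemma \ref{u_bundle_av} into \eqref{L2_form_coh}, observe that the $p_1^\ast c_1(L)$ and $p_2^\ast\eta$ terms reproduce the simply connected computation giving $\pi\sigma\,c_1(H)$, and identify the $c_1(\mathcal{P})^2$ contribution with the cohomological Fourier--Mukai transform. Your explicit bidegree bookkeeping (showing that only $B$, $2AB$ and $C^2$ survive the fibre integration) simply spells out what the paper leaves implicit.
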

\begin{proof}
We use expression \eqref{L2_form_coh} for the K\"ahler class $[\omega_\MM]$ and the formula 
\[
c_1 (\mathcal{L}) \ = \ p_1^\ast \, c_1 (L) \ + \ p_2^\ast\, \eta \ + \   (\text{id} \times \text{\rm proj})^\ast \, c_1(\mathcal{P})
\]
given in  Lemma \ref{u_bundle_av}. Start by observing that the first two terms in this formula coincide with the expression for $c_1 (\mathcal{L})$ in the case of simply connected manifolds. In particular when we substitute these two terms into \eqref{L2_form_coh} and perform the fibre integration we will obtain the result $\pi\, \sigma \, c_1 (H)$,  
just as in the case of simply connected manifolds (cf. formula \eqref{k_class1}). When we substitute the last term of $c_1 (\mathcal{L})$ into the right hand side of \eqref{L2_form_coh} we get the additional term
\[
- \: \frac{\pi^2}{e^2} \:  \int_M \: (\text{id} \times \text{\rm proj})^\ast \, \Big(c_1 (\mathcal{P}) \wedge c_1 (\mathcal{P}) \wedge \frac{[\omega_M^{m-1}]}{(m-1)!}\, \Big) \ = \ - \: \frac{2\pi^2}{e^2} \: \text{\rm proj}^\ast \int_M \: \text{\rm ch} (\mathcal{P}) \wedge \frac{[\omega_M^{m-1}]}{(m-1)!} \ .
\]
This term coincides with the last term in \eqref{k_class2}, by definition of $\mathcal{F}$.
\end{proof}

\begin{remark}
When $[\omega_M]$ is an integral class we can write $\omega_M = c_1(B)$ for some line bundle $B \longrightarrow M$. Then 
\[
\mathcal{F} \Big( \frac{[\omega_M^{m-1}]}{(m-1)!} \Big) \ = \ c_1 (\hat{B}), 
\]
where the vector bundle $\hat{B} \longrightarrow \hat{M}$ is the Fourier-Mukai transform of $B$.
\end{remark}
\begin{remark}
The dual variety $\hat{M} = \text{\rm Pic}^0 M$ coincides with the moduli space of  integrable $U(1)$-connections on $M$ with zero mean curvature, i.e. connections that satisfy $\Lambda F_A  = 0 =  F_A^{0,2}$. This moduli space has a natural $L^2$-metric defined by the first term in \eqref{vortexmetric}, i.e. by
\[
\Vert \dot{A} \Vert^2 \ = \ \int_M \ \frac{1}{4e^2} \: k_{ab} \: \dot{A}_1^a \wedge  \ast_{M} \, \dot{A}_2^b  
\]
Then the last term in \eqref{k_class2}, before pulling back by $\text{\rm proj}^\ast$, is exactly the K\"ahler class of this $L^2$-metric on the moduli space $\hat{M}$ of connections. 

\end{remark}

\noindent
Now that we know the K\"ahler class on the moduli space, the next task is to compute the total volume of $(\MM, \omega_\MM)$. Recall in the case of abelian varieties the moduli space is a projective bundle $\text{\rm proj}: \MM = \mathbb{P}(\hat{L}) \longrightarrow \hat{M}$. Therefore the integral of the volume form can be performed in two steps: first integrate over the projective fibres and then integrate over the base $\hat{M}$. That is
\[
\text{\rm Vol}\, \MM \ = \ \int_\MM \frac{[\omega_\MM]^{m+r-1}}{(m+r-1)!} \  = \   \int_{\hat{M}} \frac{ \text{\rm proj}_\ast \, [\omega_\MM]^{m+r-1}}{(m+r-1)!}  \ .
\]
Since $\MM$ is the projectivization $\mathbb{P}(\hat{L})$, 
the fibre-integration $\text{\rm proj}_\ast : H^\ast (\MM, \mathbb{R}) \rightarrow H^{\ast - r +1} (\hat{M}, \mathbb{R})$ can be conveniently expressed in terms of the Segre class of $\hat{L}$. This class is defined as the inverse of the total Chern class by the formal expansion of
\[
s(\hat{L}) \ = \ \frac{1}{c(\hat{L})} \ = \ \frac{1}{1+ c_1(\hat{L}) + c_2 (\hat{L}) +  \cdots} \ .
\]
As explained for instance in \cite{BDW}, it has the useful property that   
\[
\text{\rm proj}_\ast \, \big( c_1(H)^{l} \big) \ = \ s_{l-r+1} (\hat{L}) \ , 
\]
where $H \longrightarrow \MM$ is the hyperplane bundle and we are using the convention that $s_j (\hat{L})$ is zero for negative $j$. It is then straightforward to compute that
\begin{equation} \label{volume_moduli_space2}
\text{\rm Vol}\, (\MM , \, \omega_\MM) \ = \ \sum_{l=0}^{m}\: \frac{(\pi\, \sigma)^{\, l+r-1} \, (-2\pi^2 /e^2)^{m-l}}{(l+r-1)! \, (m-l)!} \  \int_{\hat{M}} s_l (\hat{L}) \: \wedge \:  \mathcal{F} \Big( \frac{[\omega_M^{m-1}]}{(m-1)!} \Big)^{m-l} \ ,
\end{equation}
To carry out the integration over the dual variety $\hat{M}$ one needs an explicit expression for the Segre class and for the Fourier-Mukai transform. The former can in principle be obtained by expanding the expression for $c(\hat{L})$ given in Lemma \ref{chern_class_transform}, although computationally this is non-trivial. The latter depends of course on the particular K\"ahler form $\omega_M$ that we take on $M$. Here we will not perform the integration over $\hat{M}$ in the general case. We will do it only for abelian varieties of complex dimension one and two.

\subsubsection*{Examples}

{\bf (1)} When $M$ has complex dimension one, i.e when it is an elliptic curve,  the natural identification $H^2(M; \mathbb{R}) = \mathbb{R}$ leads to the formulae $c_1 (L)^\parallel = c_1(L) = \deg L$ and $[\omega_M] = \text{\rm Vol}\, M$. Moreover, the Fourier-Mukai transform of 1 is just  $- \theta$, where $\theta$ stands for the positive generator of  $H^2(\hat{M}, \mathbb{Z})$. Hence our formula for the K\"ahler class coincides with the one given in \cite{MN, Perutz}.

$\ $

\noindent
{\bf (2)} Let $M$ have complex dimension $m=2$. Using the notation of Section 2.3, we can choose a set $\{ \dd x^k \}$ of generators of  $H^1(M, \mathbb{Z})$ such that
\[
c_1 (L) \ = \  \delta_1 \, \dd x^1 \wedge  \dd x^{m+1} \ + \  \delta_2 \, \dd x^2 \wedge  \dd x^{m+2} \ ,
\]
for some positive integers $\delta_k$. If we choose a K\"ahler form on $M$ of the form
\[
\omega_M \ = \  \lambda_1 \, \dd x^1 \wedge  \dd x^{m+1} \ + \  \lambda_2 \, \dd x^2 \wedge  \dd x^{m+2} \ ,
\]
where the $\lambda_k$'s are positive real numbers, then
\[
\text{\rm Vol} \ M \ = \ \lambda_1 \, \lambda_2 \ \ ,  \qquad \qquad \frac{c_1 (L)^\parallel}{[\omega_M]} \, \text{\rm Vol}\, M \ = \ \frac{1}{2}\, (\delta_1 \, \lambda_2 \ + \  \delta_2 \, \lambda_1) \ .
\]
The formulae of Lemma \ref{chern_class_transform} can be applied to obtain
\[
s(\hat{L}) \ = \ \frac{1}{c(\hat{L})} \ = \  1 \ - \ c_1 (\hat{L}) \ + \ \frac{r+ 1}{2r} \, \, c_1(\hat{L})^2 \ , 
\]
where the first Chern class of the Fourier-Mukai transform $\hat{L} \longrightarrow \hat{M}$ is 
\[
c_1 (\hat{L}) \ = \  -\, \delta_2 \, \dd x_1^\ast \wedge  \dd x^\ast_{m+1} \ - \  \delta_1 \, \dd x^\ast_2 \wedge  \dd x^\ast_{m+2} \ .
\]
In this expression the $ \dd x^\ast_{k}$'s are the generators of $H^1(\hat{M}, \mathbb{Z})$ dual to the $\dd x^{k}$'s. Similarly, the cohomological Fourier-Mukai transform of the K\"ahler class of $M$ is
\[
 \mathcal{F} ( [\omega_M]) \ = \ -\, \lambda_2 \, \dd x_1^\ast \wedge  \dd x^\ast_{m+1} \ - \  \lambda_1 \, \dd x^\ast_2 \wedge  \dd x^\ast_{m+2} \ .
\] 
The volume of the vortex moduli space can then be explicitly computed through \eqref{volume_moduli_space2}. The result is
\[
\text{\rm Vol} \, (\MM, \omega_\MM) \ = \  \pi \,  (\text{\rm Vol}\, M)\,  \Big[\: \tau \: + \:  \frac{4 \pi^2\,r}{e^4\, \sigma} \: \Big] \, \frac{(\pi\, \sigma)^{r}}{r!} \ ,
\]
where $r = r(L)= \delta_1 \delta_2$ and $\sigma$ is the parameter \eqref{stability1}.

\subsection{Abelian GLSM}

\subsubsection*{The K\"ahler class}

In this section we will compute the K\"ahler class of the metric on the moduli space for general abelian gauged linear sigma-models. This will be done both when $M$ is simply connected and when it is an abelian variety.  The essential ingredient is the abstract formula of Perutz for the  K\"ahler form $\omega_\MM$, or, more precisely, its generalization to gauged sigma-models and higher dimensional manifolds \cite{Bap2010}. This formula presents $\omega_\MM$ as the fibre integral
\begin{equation}
\omega_\MM  \ = \  \int_M \ \sum_{a=1}^k\ \frac{i\, \tau_a}{2\, m!}\, F_\mathcal{A}^a \wedge \omega_M^m \ + \ \frac{1}{4 e^2 (m-1)!}\,  F_\mathcal{A}^a \wedge F_\mathcal{A}^a \wedge \omega_M^{m-1} \ ,
\label{l2_metric}
\end{equation}
where $F_\mathcal{A}$ is the curvature on the universal bundle $\mathcal{U} \rightarrow M \times \mathcal{M}$. Since this is a principal $T^k$-bundle, the curvature is a 2-form on the base with values in $i \mathbb{R}^k$. Taking the cohomology class we then have
\begin{equation}
[\omega_\MM]  \ = \   \int_M \ \sum_{a=1}^k\  \frac{ \pi\, \tau_a}{m!}\, \, c_1(\mathcal{U}^a) \wedge \omega_M^m \ - \ \frac{\pi^2}{e^2 (m-1)!}\, \,  c_1(\mathcal{U}^a) \wedge c_1(\mathcal{U}^a) \wedge \omega_M^{m-1} \ . \label{l2_metric}
\end{equation}
When $M$ is simply connected the vortex moduli space is the toric orbifold given by Proposition \ref{toric_moduli_space}. We will assume that conditions (C1) and (C2) of Section 3.2 are satisfied, so that $\MM$ is smooth. Then the Chern classes $c_1(\mathcal{U}^a)$ are given explicitly by Lemma \ref{u_bundle1}. It is now straightforward to perform the fibre integration and obtain:
\begin{proposition} \label{kclass_glsm1}
Let $M$ a compact and simply connected K\"ahler manifold. The  K\"ahler class of the $L^2$-metric on the vortex moduli space \eqref{moduli_space_glsm1} is
\begin{equation} \label{kclass1}
\big[ \omega_\MM \big] \ = \  \sum_{a=1}^k  \, \pi\, \sigma^a \ \eta_a \ ,
\end{equation}
where $\sigma$ is the stability vector \eqref{stability_vector2} and  the classes $\eta_a \in H^{2}(\MM , \mathbb{Z})$ were defined in \eqref{definition_eta1}.
\end{proposition}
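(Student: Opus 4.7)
The plan is to substitute the explicit description of $c_1(\mathcal{U}^a)$ from Lemma \ref{u_bundle1} into the cohomological version of Perutz's formula \eqref{l2_metric} and perform the fibre integration over $M$ term by term. Concretely, writing $c_1(\mathcal{U}^a) = p_1^\ast c_1(P^a) + p_2^\ast \eta_a$ and expanding the linear and quadratic expressions in the integrand, each wedge product splits into a sum of pieces of the form $p_1^\ast \alpha \wedge p_2^\ast \beta$. Because $p_2 : M \times \MM \to \MM$ is a trivial fibration, the fibre integral of such a piece vanishes unless $\deg\alpha = 2m$, in which case it equals $(\int_M \alpha)\, \beta$.

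First I would treat the linear term $c_1(\mathcal{U}^a)\wedge \omega_M^m$. The contribution from $p_1^\ast c_1(P^a)$ wedges with a top form on $M$, producing a form of excess degree on $M$ and hence dies, so only $p_2^\ast \eta_a \wedge p_1^\ast \omega_M^m$ survives. Its fibre integral is $m!\,(\text{\rm Vol}\,M)\,\eta_a$, which yields the clean contribution $\pi\,\tau_a\,(\text{\rm Vol}\,M)\,\eta_a$. Next I would handle the quadratic term $c_1(\mathcal{U}^a)\wedge c_1(\mathcal{U}^a)\wedge \omega_M^{m-1}$. Of the three pieces in the expansion, the pure $p_1^\ast$ piece has total $M$-degree $2m+2$ and vanishes, the pure $p_2^\ast$ piece has $M$-degree $2m-2<2m$ and also vanishes under fibre integration, and the cross term $2\,p_1^\ast(c_1(P^a)\wedge \omega_M^{m-1}) \wedge p_2^\ast \eta_a$ is the only survivor. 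Its fibre integral is $2(\int_M c_1(P^a)\wedge \omega_M^{m-1})\,\eta_a$.

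To identify this with $\sigma^a$, I would invoke the Lefschetz decomposition exactly as in Section 3.1: only the $[\omega_M]$-parallel component of $c_1(P^a)$ contributes when wedged with $\omega_M^{m-1}$, so
\[
\int_M c_1(P^a)\wedge \omega_M^{m-1} \ =\ \frac{c_1(P^a)^\parallel}{[\omega_M]}\, \cdot m!\,\text{\rm Vol}\,M \ .
\]
Collecting the linear and quadratic contributions, the coefficient of $\eta_a$ in $[\omega_\MM]$ becomes
\[
\pi\,\text{\rm Vol}\,M\, \Big(\tau_a \ -\ \frac{2\pi\, m}{e^2}\,\frac{c_1(P^a)^\parallel}{[\omega_M]}\Big) \ = \ \pi\,\sigma^a\ ,
\]
by the very definition \eqref{stability_vector2} of $\sigma$. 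Summing over $a$ gives the claimed formula $[\omega_\MM] = \sum_a \pi\,\sigma^a\,\eta_a$.

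There is no real obstacle here: once Lemma \ref{u_bundle1} and the Perutz-type formula \eqref{l2_metric} are in hand, the computation is a bookkeeping exercise in which the Lefschetz decomposition plays exactly the same role as in the line-bundle case of Section 4.1, and the cross terms automatically assemble into the stability vector $\sigma$. The only point requiring a moment of care is the sign and normalization in the quadratic term, which must match the factor $-\pi^2/(e^2(m-1)!)$ in \eqref{l2_metric} against the combinatorial factor $m$ appearing through $\omega_M^m = m\,[\omega_M]\wedge \omega_M^{m-1}/\text{const}$ when comparing $\int_M c_1(P^a)\wedge \omega_M^{m-1}$ with $\int_M \omega_M^m$.
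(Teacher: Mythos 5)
Your proposal is correct and follows exactly the route the paper takes: substitute the expression for $c_1(\mathcal{U}^a)$ from Lemma \ref{u_bundle1} into the cohomological Perutz formula, kill the degree-mismatched terms under fibre integration, and use the Lefschetz decomposition to convert $\int_M c_1(P^a)\wedge\omega_M^{m-1}$ into $\frac{c_1(P^a)^\parallel}{[\omega_M]}\,m!\,\mathrm{Vol}\,M$, so that the linear and cross terms assemble into $\pi\,\sigma^a\,\eta_a$. The paper merely declares this fibre integration "straightforward" by analogy with Section 4.1; your write-up supplies the bookkeeping with the correct signs and normalizations.
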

\noindent
When the manifold $M$ is an abelian variety, the moduli space $\MM$ is the toric fibration over $\text{\rm Pic}^0 (M)$ described in Proposition  \ref{toric_ms_abelian_varieties}. The Chern classes $c_1(\mathcal{U}^a)$ are in this case given by Lemma \ref{u_bundle2}. The fibre integration over $M$ can be performed in a way completely analogous to the case of vortices on line bundles (cf. proof of \eqref{k_class2}) to yield:
\begin{proposition} \label{kclass_glsm2}
Let $M$ be an abelian variety. The K\"ahler class of the $L^2$-metric on the vortex moduli space \eqref{moduli_space_glsm2} is
\[
\big[ \omega_\MM \big] \ = \  \sum_{a=1}^k \, \pi\, \sigma^a \ \eta_a  \ - \ \frac{2\pi^2}{e^2} \, \text{\rm proj}^\ast_a \, \, \mathcal{F} \Big( \frac{[\omega_M^{m-1}]}{(m-1)!} \Big) \ ,
\]
where $\sigma$ is the stability vector \eqref{stability_vector2}, the classes $\eta_a \in H^{2}(\MM , \mathbb{Z})$ were defined in \eqref{definition_eta2} and $\mathcal{F}$ is the cohomological Fourier-Mukai transform $\mathcal{F}:  H^{2m-2}(M; \mathbb{R}) \rightarrow H^{2}(\hat{M}; \mathbb{R})$.
\end{proposition}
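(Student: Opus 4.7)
The plan is to mimic the argument already used for line bundles over abelian varieties in the proof of \eqref{k_class2}, only now applied summand-by-summand to the $k$ universal circle bundles. Starting from the cohomology-level Perutz formula \eqref{l2_metric}, the task is reduced to computing the fibre integral $p_{2\ast}$ of the classes $c_1(\mathcal{U}^a)\wedge\omega_M^m$ and $c_1(\mathcal{U}^a)^{2}\wedge\omega_M^{m-1}$ after substituting the expression of $c_1(\mathcal{U}^a)$ supplied by Lemma \ref{u_bundle2}.

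The first step is to decompose
\[
c_1(\mathcal{U}^a) \ = \ \gamma_a \ + \ \theta_a \, , \qquad \gamma_a := p_1^\ast c_1(P^a)+p_2^\ast\eta_a\, , \quad \theta_a := (\text{\rm id}\times\text{\rm proj}_a)^\ast c_1(\mathcal{P}) \, ,
\]
and expand the integrands using $c_1(\mathcal{U}^a)^2=\gamma_a^{2}+2\gamma_a\theta_a+\theta_a^{2}$. The $\gamma_a$-only contribution is formally identical to the one that appears in the simply connected case, so the computation leading to Proposition \ref{kclass_glsm1} yields exactly $\pi\,\sigma^a\,\eta_a$ for each $a$. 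The second step is to show that the cross terms $\gamma_a\theta_a$ do not contribute. Since $c_1(\mathcal{P})=\sum_\alpha \dd x^\alpha\wedge \dd x^{\ast}_\alpha$ is of K\"unneth bidegree $(1,1)$ on $M\times\hat M$, the class $\theta_a$ has bidegree $(1,1)$ on $M\times\MM$. Then $p_1^\ast c_1(P^a)\wedge\theta_a\wedge\omega_M^{m-1}$ has $M$-degree $2m+1$ and $p_2^\ast\eta_a\wedge\theta_a\wedge\omega_M^{m-1}$ has $M$-degree $2m-1$; neither can be the top-degree piece on $M$, so both push forwards to $\MM$ vanish. The $\tau$-term contribution $c_1(\mathcal{U}^a)\wedge\omega_M^{m}$ is disposed of by the same degree counting: only the $p_2^\ast\eta_a$ piece survives integration over $M$.

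The remaining task, which is the genuinely new ingredient, is the push forward of $\theta_a^{2}\wedge\omega_M^{m-1}/(m-1)!$. Using the functoriality of $p_{2\ast}$ under the map $\text{\rm id}\times\text{\rm proj}_a$, this equals $\text{\rm proj}_a^\ast\,\int_M c_1(\mathcal{P})^2\wedge[\omega_M^{m-1}]/(m-1)!$, where the integral over $M$ is the push forward along $M\times\hat M\to\hat M$. Since $\mathcal{P}$ is a line bundle, $c_1(\mathcal{P})^{2}=2\,\text{\rm ch}_2(\mathcal{P})$, and by the very definition of the cohomological Fourier-Mukai transform one recognizes this push forward as $2\,\mathcal{F}\big([\omega_M^{m-1}]/(m-1)!\big)$. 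Multiplying by the coefficient $-\pi^{2}/e^{2}$ from \eqref{l2_metric} reproduces the advertised correction term.

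Collecting the three contributions over $a=1,\ldots,k$ produces the formula in the statement. The only delicate bookkeeping step is the bidegree analysis that kills the cross terms $\gamma_a\theta_a$; everything else is a direct transcription, summand by summand, of the argument used in the line-bundle case of Section 4.2.
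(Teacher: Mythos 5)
Your proposal is correct and follows essentially the same route as the paper, which simply refers the reader back to the fibre-integration argument of the line-bundle case over an abelian variety: substitute the Chern class formula of Lemma \ref{u_bundle2} into the cohomological Perutz formula, recover $\pi\sigma^a\eta_a$ from the first two terms exactly as in the simply connected case, and identify the $c_1(\mathcal{P})^2$ push-forward with $2\,\mathcal{F}\big([\omega_M^{m-1}]/(m-1)!\big)$. Your explicit K\"unneth-bidegree bookkeeping that kills the cross terms is a welcome elaboration of a step the paper leaves implicit, and all the degree counts check out.
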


\subsubsection*{Volumes of moduli spaces}

The volume of $(\MM, \omega_\MM)$ can in principle be computed using our knowledge of the K\"ahler class $[\omega_\MM]$ and of the cohomology ring $H^\ast (\MM , \mathbb{Z})$. For a general abelian GLSM, however, the intersection calculations in the cohomology ring are algebraically evolved, since the moduli space $\MM$ may be a complicated toric manifold or, in the case of abelian varieties,  may be an even more complicated toric fibration over the Picard group $\text{\rm Pic}^0 (M)$. So it is difficult to present a clean and general formula for the volume or total scalar curvature of the moduli space.
 In this paragraph we will carry out the computations only for the simplest examples. These will be GLSM with group $U(1)$ and $n$ sections $\phi_1 , \ldots , \phi_n$, where $U(1)$ acts with weight 1. As we will recall in Section 5, these models are closely related to holomorphic maps $M \rightarrow \CC \mathbb{P}^{n-1}$. We will perform the calculations in the case where $M$ is simply connected or when it is an abelian variety of complex dimension 2.

\noindent
{\bf (1)} Suppose that $P\rightarrow M$ is a $U(1)$-principal bundle over a simply connected manifold. Using the notation of Section 3, we will consider the associated GLSM with integral weights $Q_1 = \cdots = Q_n =1$. In this case the line bundles $L_1 , \ldots , L_n$ are all isomorphic, so the $\phi_j$'s can be regarded as sections the same line bundle, which we call simply $L$. Proposition \ref{toric_moduli_space} says that for this model the vortex moduli space is the projective space
\[
\MM \ \simeq \ \CC \mathbb{P}^{\, n\, r(L) \, -\, 1} \ ,
\]
where $r(L)$ denotes the complex dimension of the space of holomorphic sections $H^0 (M; L)$.
Proposition \ref{kclass_glsm1} says that the K\"ahler class is $[\omega_\MM] = \pi \, \sigma\, \eta$, where $\eta$ is the standard positive generator of the cohomology ring of projective space. Thus we have
\begin{equation}
\text{\rm Vol}\, (\MM , \omega_\MM) \ = \ \frac{(\pi \, \sigma)^{n\, r - 1}}{\big(n r -1 \big)!} \ .
\end{equation} 
The total scalar curvature of $(\MM, \omega_\MM)$ is still given by formula \eqref{scalar_curvature}. A few explict values of $r(L)$ for explict manifolds $M$ are given in the examples of Section 2.2.

\noindent
{\bf (2)} Consider the same GLSM as above, but now over an abelian variety $M$ of complex dimension 2, as is Example (2) of Section 4.2. The moduli space $\MM$ is in this case a projective bundle over the dual variety $\hat{M}$ with fibre $ \CC \mathbb{P}^{\, n\, r(L) \, -\, 1}$. Computations similar to those of Section 4.2 then lead to 
\[
\text{\rm Vol} \, (\MM, \omega_\MM) \ = \  \pi \,  (\text{\rm Vol}\, M)\,  \Big[\: \tau \: + \:  \frac{4 \pi^2\, n\, r}{e^4\, \sigma} \: \Big] \, \frac{(\pi\, \sigma)^{nr}}{(nr)!} \ .
\]

\subsection{Abelian GLSM with polynomial constraints}

In this paragraph we make a short digression to consider GLSM with polynomial constraints. These models are relevant to the study of holomorphic maps from $M$ to projective varieties \cite{Morrison-Plesser}.
Take the simple case of the gauged linear sigma-model with group $U(1)$ and weights $Q_1^1 = \cdots  = Q^1_n = 1$. Then all the complex line bundles $L_j$ are isomorphic to a given $L \rightarrow M$. The variables in the vortex equation are a connection $A$ and $n$ sections $\phi^1 , \ldots , \phi^n$ of this bundle. Now let $S(z^1, \ldots , z^n)$ be a complex homogeneous polynomial of degree $l$. It makes sense to write the equation
\begin{equation} \label{polynomial}
S(\phi^1, \ldots, \phi^n) \ = \ 0
\end{equation}
in the space of sections of the bundle $\otimes^{l} L$. We can thus look for vortex solutions of the model that satisfy the additional constraint \eqref{polynomial}. This will define a subspace $\MM^S$ of the moduli space $\MM$. 

When $L$ is positive and $\text{\rm Pic}(M) \simeq \mathbb{Z}$ and the stability condition \eqref{stability1} is satisfied, Proposition \ref{toric_moduli_space} states that the vortex moduli space for this GLSM is the projective space $\MM \simeq \CC\mathbb{P}^{\, n\, r(L) - 1}$. As usual, $r(L)$ stands for the complex dimension of the space of holomorphic sections of the bundle $L$ equipped with its unique holomorphic structure. In this case, it is clear that the subset $\MM^S$ cut out by condition \eqref{polynomial} will then be a projective variety inside $\MM$. In fact, choosing a basis of the $r(L)$-dimensional space $H^0(M, L)$ and a basis of the $r(\otimes^{l} L)$-dimensional space $H^0(M, \otimes^l L)$, the vector-valued polynomial equation \eqref{polynomial} can be rewritten as a set of $r(\otimes^{l} L)$ complex-valued homogeneous polynomial equations of degree $l$. The variables of these equations are the components of the sections $\phi^j$ with respect to the chosen basis of $H^0(M, L)$, which are precisely the homogeneous coordinates on the projective space $\MM$. In general the projective variety $\MM^S \subset \CC\mathbb{P}^{\, n\, r(L) - 1}$ may not be smooth. All depends on the polynomial $S$ and on the values $r(L)$. For a generic choice of $S$, however, the constrained moduli space $\MM^S \subset \MM$ is indeed a smooth projective variety of complex dimension $n\, r(L) - 1 - r(\otimes^{l} L)$. The Poincar\'e-dual of the homology class $[\MM^S]$ is 
\[
\text{\rm PD}([\MM^S]) \ = \ (l\, \eta)^{r(\otimes^{l} L)} \qquad \in \ \ H^{2\, r(\otimes^{l} L)} (\MM,  \mathbb{Z}) \ ,
\]
where $\eta$ is the standard positive generator of the cohomology of projective space. It follows that the  volume of the submanifold $\MM^S$ equipped with the metric induced by the $L^2$-metric on the moduli space $\MM$ is
\[
\text{\rm Vol}\, (\MM^S,\, \omega_\MM\,|_{\MM^S} ) \ = \ \frac{1}{(\dim \MM^S)!} \, \int_\MM \, (\omega_\MM)^{\dim \MM^S}\, \wedge \, \text{\rm PD}([\MM^S]) \  =\ \frac{(\pi \, \sigma)^{\dim \MM^S} \, l^{\, r(\otimes^{l} L)}}{(\dim \MM^S)!} \ .
\]

\section{Holomorphic maps to toric manifolds}

\subsection{Vortices and maps to projective space}

Let $M$ be a compact K\"ahler manifold and let $\mathcal{H}$ denote the (generally non-compact) space of holomorphic maps $f: M \rightarrow \CC \mathbb{P}^{n-1}$. Here we will describe a natural embedding of $\mathcal{H}$ into a compact vortex moduli space. The necessary ingredients are the Hitchin-Kobayashi correspondence of Section 3.1 and a standard construction in complex algebraic geometry. Everything here is well known, at least when $M$ is a Riemann surface \cite{BDW}. This exposition is a preamble to Section 5.2, where we study the space of maps $M \rightarrow X$ to more general toric targets. Nevertheless, the basic ideas already appear and are easier to grasp when the target is $\CC \mathbb{P}^{n-1}$. In the final Section 5.3 we make natural conjectures about the relation between the $L^2$-metrics on $\mathcal{H}$ and on $\MM$.

There is a well known construction \cite{GH} that identifies the space of maps $\mathcal{H}$ with a subset of the space $\MM$ of possible choices of ``a holomorphic line bundle $L \rightarrow M$ plus $n$ holomorphic sections, up to common rescaling of the sections". In fact, it is easy to see that such data $(L, \phi_1 , \ldots, \phi_n)$ determine a holomorphic map $f_{(L, \phi)}$ by the simple definition
\begin{align}\
f_{(L, \phi)} :\ M \ &\longrightarrow\ \CC \mathbb{P}^{n-1}  \label{induced_map}   \\
 x\  &\longmapsto \  \big[\phi_1 (x),  \ldots, \phi_n (x) \big] \ , \nonumber
\end{align}
where we are using homogeneous coordinates on the projective space. The map $f_{(L, \phi)}$ is globally well defined if and only if the sections $\phi_1 , \ldots, \phi_n$  do not vanish simultaneously at any point $x \in M$. In this case $f_{(L, \phi)}$ is clearly holomorphic, for the sections $\phi_j$ are all holomorphic. If we denote by 
$H \rightarrow \CC \mathbb{P}^{n-1}$ the hyperplane bundle, one can also  show that
\begin{equation} \label{pullback_coh}
f_{(L, \phi)}^\ast \, c_1 (H) \ = \ c_1 (L)
\end{equation}
in $H^2(M , \mathbb{Z})$. So the correspondence between holomorphic maps and systems of sections of line bundles preserves topological sectors. Now denote by $\mathcal{I} \subset \MM$ the (generally non-compact) subset defined be the condition that the sections $\phi_1 , \ldots, \phi_n$  do not vanish simultaneously anywhere on $M$. So far we have described a map $\mathcal{I} \rightarrow \mathcal{H}$. We will now describe the inverse $\mathcal{H} \rightarrow \mathcal{I}$. This implies that the correspondence $\mathcal{H} \hookrightarrow \MM$ is an injection with image $\mathcal{I}$. The inverse acts as follows: given a map $f \in \mathcal{H}$, just define $L := f^\ast H$ and take the $n$ holomorphic sections to be $\phi_j := f^\ast s_j$, where $s_1, \ldots , s_n$ are the elements of the standard basis of $H^0 (\CC \mathbb{P}^{n-1}; H)$ provided by the $n$ homogeneous coordinates on projective space. This correspondence has values in $\mathcal{I} \subset \MM$ and clearly is the inverse of the correspondence \eqref{induced_map}. 

Having described the map $\mathcal{H} \hookrightarrow \MM$, it is natural to ask whether $\mathcal{H}$ embeds as an open and dense subset of $\MM$. The answer is that this need not be the case. In fact, it is clear that $(L, \phi_1 , \ldots, \phi_n)$ belongs to the subset $\mathcal{I} \subset \MM$ precisely if the intersection $D_1 \cap \cdots \cap D_n$ of the associated zero-set divisors is empty. When $M$ is Riemann surface, this is generically true, since different divisors of points do not in general intersect. For higher-dimensional $M$, however, the divisors have support along hypersurfaces, and the generic intersection may not be empty. For instance, when $M$ is a projective space, we know that the codimension of the intersection of generic projective varieties is the sum of the codimensions of the varieties. This implies that for a generic choice of divisors the intersection $D_1 \cap \cdots \cap D_n$ is empty if and only if $n > \dim_\CC M$. So the space of holomorphic maps $\CC \mathbb{P}^m \rightarrow \CC \mathbb{P}^{n-1}$ of positive degree embeds as an open dense subset of $\MM$ iff $n > m$.

Note that so far in this Section we have defined $\MM$ to be the space  of possible choices of ``a holomorphic line bundle $L \rightarrow M$ plus $n$ holomorphic sections, up to common rescaling of the sections". However, as  was seen in Section 3.1, this space can be identified with a vortex moduli space. It coincides with the moduli space for the GLSM with goup $U(1)$ and integer weights  $Q_1^1 = \cdots = Q_n^1 = 1$, assuming that the stability parameter \eqref{stability_vector2} is positive.
Regarding $\MM$ as vortex moduli space has the advantage that it now comes equipped with a natural K\"ahler metric --- the metric defined in \eqref{vortexmetric} and studied in Section 4. Since the space $\mathcal{H}$ of maps also has a natural $L^2$-metric, one may wonder if there is any relation between these two metrics under the embedding $\mathcal{H} \hookrightarrow \MM $. This question was posed in \cite{Bap2010} in the case where $M$ is a Riemann surface. In Section 5.3 we extend the arguments and conjectures of that reference to higher-dimensional $M$.


\subsection{Vortices and maps to toric manifolds}

\subsubsection*{Toric manifolds}

Let $X$ be a smooth and not necessarily compact toric manifold of complex dimension $n-k$. We will think of it as a symplectic  quotient of $\CC^{n}$ by a linear, effective and hamiltonian $T^k$-action with integer weights $Q^a_j$, where the indices run as $a=1, \ldots , k$ and $j = 1 , \ldots , n$. This means that $X$ is the quotient space
\[
X \ :=\  \mu^{-1}(0) \, /\, T^k  \ , 
\]
with moment map $\mu: \CC^n \rightarrow  i \mathbb{R}^k$ given by
\begin{equation} \label{moment_map5}
\mu\, (z) \ = \ - \, \frac{i}{2} \Big( \sum_{j=1}^n \, Q_j\,  \vert z^j \vert^2 \ - \ \tau  \Big) \ .
\end{equation}
As in Section 3.1, denote by $\Delta$ the closed cone in $\RR^k$ defined by linear combinations of the weights $Q_j \in \mathbb{Z}^k$ with non-negative scalar coefficients. We will assume that the weights and the constant $\tau \in \mathbb{R}^k$ satisfy conditions similar to those of Section 3.2, namely:

{\it
\noindent
{\bf (H1)} The vector $\tau \in \RR^k$ lies in the cone $\Delta$, but does not lie in any of the proper subspaces of $\mathbb{R}^k$ spanned by proper subsets of weights $Q_j$.

\noindent
{\bf (H2)} 
Every subset of weights in $\{Q_1, \ldots, Q_n \}$ that spans $\mathbb{R}^k$ also generates the integer lattice ${\mathbb Z}^k$ in $\RR^k$.
}

\noindent
These two conditions guarantee that the quotient is indeed smooth of dimension $n-k$. One can also think of $X$ as a complex quotient of $\CC^{n}$. In this case one uses the complexified $(\CC^\ast)^k$-action on $\CC^{n}$ with the same integer weights, and considers the so-called stable subset $B_\tau :=  (\CC^\ast)^k \cdot \mu^{-1} (0)$ of the vector space $\CC^n$. This is clearly a $(\CC^\ast)^k$-invariant subset. It follows from condition (H1) and Lemma \ref{aux_lemma1} that $B_\tau$ is open and dense inside $\CC^{n}$. Moreover
\begin{equation}\label{complex_quotient}
X \ = \ B_\tau\, / (\CC^\ast)^k \ .
\end{equation}
All these observations are standard facts in toric geometry and can be recognized by arguments similar to those used in Lemma \ref{stabilizers} and Proposition \ref{toric_moduli_space}.

It is well known that the cohomology ring $H^\ast (X; \mathbb{Z})$ of such a toric manifold is generated by a set of natural $(1,1)$-classes. These are the first Chern classes of the  $k$ line bundles $H_a \rightarrow X$. Recall that $H_a$ is defined as  the quotient of the cartesian product $\mu^{-1}(0) \times \CC$ by the free $T^k$-action 
\begin{equation} \label{action5}
g \cdot (z_1,  \ldots, z_n; \: w) \ := \  \Big(\, z_1 \, \Big( \prod_{b=1}^k (g_b)^{Q_1^b} \Big) , \, \ldots ,\, z_n \, \Big( \prod_{b=1}^k (g_b)^{Q_n^b} \Big) \, ; \: g_a \,w \Big)  \ ,
\end{equation}
where $z_1, \ldots, z_n$ are complex coordinates on $\CC^n$, the ambient space of $\mu^{-1}(0)$.

\subsubsection*{Holomorphic maps as vortices}

Let $\mathcal{H}$ denote the space of holomorphic maps $f: M \rightarrow X$ with its usual topology. It can be broken into distinct topological components according  to the values of the pulback classes $f^\ast\, c_1(H_a)$ in the integer cohomology of $M$. For each vector 
\[
\xi \ \in \ \bigoplus^k \Big(\, H^2(M, \mathbb{Z}) \, \cap \, H^{(1,1)}(M, \CC) \, \Big) \ ,
\]
 we denote by  $\mathcal{H}_\xi$ the subspace of holomorphic maps $f$ such that $f^\ast\, c_1(H_a) = \xi_a$ for all indices $a$. 
Now let $P \rightarrow M$ be a principal $T^k$-bundle with $c_1(P) = \xi$. As in Section 3, we can consider the GLSM on the manifold $M$ associated to $P$ and the weights $Q^a_j$. The vortex equations for this theory are as in \eqref{v-eq}. It is clear from the definition \eqref{stability_vector2} of $\sigma$ that, for a sufficiently big value of the constant $e^2$, the assumptions (H1) and (H2) above imply the analogous conditions (C1) and (C2) of Section 3.2. So in this case the vortex equations will have the nice set of solutions described in that Section. Making explicit the Chern class of the principal bundle, we will denote by $\MM_\xi$ this vortex moduli space.

\begin{proposition}\label{embedding}
Assume that condition (H1) is satisfied. If the constant $e^2$ is sufficiently large there exists a natural holomorphic embedding $\mathcal{H}_\xi \hookrightarrow  \MM_\xi$. The image $\mathcal{I}_\xi \subset \MM_\xi$ of this map consists of the vortex solutions  $[A, \phi_1, \ldots, \phi_n]$ such that, for every point $x \in M$, the vector $\tau$ is in the interior of the cone $\Delta_{I_{\phi (x)}}$, where we have defined the index subset $I_{\phi (x)} = \{  j\in \{1, \ldots, n\} : \   \phi_j (x) \neq 0\}$.
\end{proposition}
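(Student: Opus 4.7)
The plan is to generalize the construction of Section 5.1 by replacing the homogeneous coordinate description of $\CC\mathbb{P}^{n-1}$ with the complex-quotient description $X = B_\tau/(\CC^\ast)^k$ of \eqref{complex_quotient}, and then to upgrade the resulting holomorphic data to a genuine vortex solution using Theorem \ref{Hitchin_Kobayashi}. Concretely, given $f \in \mathcal{H}_\xi$, fix hermitian metrics on the line bundles $H_a \to X$ and let $P^a$ denote the unitary frame bundle of $f^\ast H_a$, so that $P := \prod_a P^a$ is a principal $T^k$-bundle with $c_1(P^a) = f^\ast c_1(H_a) = \xi_a$. The coordinate $z_j$ on $\CC^n$ descends to a holomorphic section of $\prod_a H_a^{Q_j^a}$ on $X$, so its pullback $\phi_j := f^\ast z_j$ is a holomorphic section of $L_j = P \times_{\rho_j} \CC$. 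Let $A$ be the $T^k$-connection on $P$ whose induced Chern connections on the $L_j$'s are the pullbacks of those on the $H_a$'s. The pair $(A,\phi_1,\ldots,\phi_n)$ then automatically satisfies the first and third vortex equations, and for every $x\in M$ the vector $\phi(x)\in \CC^n$ is a representative in $B_\tau$ of $f(x)\in X$. By the standard toric characterization of the stable set recalled after \eqref{complex_quotient}, membership $\phi(x)\in B_\tau$ is equivalent to $\tau$ lying in the interior of the cone generated by $\{Q_j : \phi_j(x)\neq 0\}$.

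Since $I_{\phi(x)} \subseteq I_\phi$ for every $x$, this pointwise stability yields $\tau \in \text{int}\,\Delta_\phi$, and \eqref{stability_vector2} gives $\sigma = \tau\,\text{Vol}\,M + O(e^{-2})$, so for $e^2$ sufficiently large $\sigma$ also lies in $\text{int}\,\Delta_\phi$. Theorem \ref{Hitchin_Kobayashi} then produces, uniquely up to gauge, a complex gauge transformation $g$ such that $(g(A), g(\phi))$ is a full vortex solution, and I define the forward map $\mathcal{H}_\xi \to \MM_\xi$ by $f \mapsto [g(A), g(\phi)]$. Since pointwise stability is invariant under $(\CC^\ast)^k$-gauge transformations, the image lies in $\mathcal{I}_\xi$. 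Conversely, given $[A,\phi]\in\mathcal{I}_\xi$, the hypothesis $\phi(x)\in B_\tau$ at every $x$ lets me define $f(x):=[\phi_1(x),\ldots,\phi_n(x)] \in X$ via \eqref{complex_quotient}; integrability of $A$ together with $\bar\partial_A\phi=0$ makes $f$ holomorphic, and the identification $f^\ast H_a \simeq P^a$ gives $f\in\mathcal{H}_\xi$. The two constructions are manifestly inverse to each other and both are holomorphic in the natural complex structures, yielding the asserted holomorphic embedding $\mathcal{H}_\xi\hookrightarrow\MM_\xi$ with image $\mathcal{I}_\xi$.

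The main obstacle is bridging the gap between the \emph{pointwise} stability of $\phi$ on the target side, which is precisely what makes $f$ land in $X$ rather than in a partial compactification, and the \emph{global} stability $\sigma\in\text{int}\,\Delta_\phi$ demanded by Theorem \ref{Hitchin_Kobayashi}. The pointwise condition is phrased in terms of $\tau$, while the Hitchin-Kobayashi condition is phrased in terms of $\sigma$; these differ by the curvature correction proportional to $c_1(P)^{\parallel}/[\omega_M]$ appearing in \eqref{stability_vector2}. This is what forces the largeness assumption on $e^2$: only then is the discrepancy between $\tau$ and $\sigma/\text{Vol}\,M$ small enough to keep $\sigma$ inside the same open cone as $\tau$, uniformly in $f\in\mathcal{H}_\xi$. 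A secondary, but routine, check is the independence of the forward construction from the auxiliary choice of hermitian metrics on the $H_a$'s, which follows because different choices produce $(A,\phi)$'s related by a $(\CC^\ast)^k$-gauge transformation and thus the same class in $\MM_\xi$.
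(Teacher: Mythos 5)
Your proof is correct and follows essentially the same route as the paper: pull back the tautological sections $s_j$ of $\otimes_a H_a^{Q_j^a}$ along $f$, apply Theorem \ref{Hitchin_Kobayashi} once $e^2$ is large enough that $\sigma$ stays in the same open cone as $\tau$, and invert via the holomorphic projection $B_\tau \to X$. The only cosmetic difference is that you deduce the global stability $\sigma \in \mathrm{int}\,\Delta_\phi$ from the pointwise condition via $I_{\phi(x)} \subseteq I_\phi$ --- a step that silently uses the monotonicity of Lemma \ref{aux_lemma1} under (H1) --- whereas the paper establishes the global and pointwise statements separately in Lemma \ref{index5} and the lemma following it.
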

\begin{remark}
It can happen that the subset $\mathcal{I}_\xi \subset \MM_\xi$ is empty, so in this case $\mathcal{H}_\xi $ will also be empty.
\end{remark}

\noindent
To describe the embedding $\mathcal{H}_\xi \hookrightarrow  \MM_\xi$, take any map $f \in \mathcal{H}_\xi$ and consider again the $k$ holomorphic line bundles $H_a \longrightarrow X$. (Each $H_a$ has a natural hermitian metric, so can be thought of as being a line bundle, a principal $\CC^\ast$-bundle, or a principal circle bundle; we will switch between these three viewpoints.) Defining $P_a := f^\ast H_a$, the cartesian product $P = \times^k P_a$ is a $T^k$-bundle over the manifold $M$ with first Chern class $c_1(P) = \xi$. Moreover, it is apparent from definition \eqref{action5} that the tensor product $\otimes_{a=1}^k \, (H_a)^{Q_j^a} \longrightarrow X$ is a line bundle with a natural section determined by the simple rule
\begin{equation} \label{natural_sections5}
s_j:\ [z_1,  \ldots, z_n] \ \longmapsto \ [z_1,  \ldots, z_n; \, z_j ] \ .
\end{equation}
This means that each associated bundle
\[
 L_j \ :=  \ \otimes_{a=1}^k \, (P_a)^{Q_j^a} \ = \ f^\ast \big( \otimes_{a=1}^k \, (H_a)^{Q_j^a}  \big) \ 
\]
has a natural holomorphic  section $\phi_j := f^\ast s_j$. So each map $f \in \mathcal{H}_\xi$ determines a holomorphic $(\CC^\ast)^k$-bundle $P \longrightarrow M$ and $n$ holomorphic sections $\phi_j$ of the associated bundles $L_j$. These correspond to a unique equivalence class of vortex solutions, i.e. to a point in $\MM_\xi$, provided that the Hitchin-Kobayashi correspondence of Theorem \ref{Hitchin_Kobayashi} is applicable, and this follows from the lemma below.

\begin{lemma}\label{index5}
Let $I_f$ be the index subset $\{ j\in \{1, \ldots, n\} : \   f^\ast s_j \not\equiv 0 \}$. Then for big enough $e^2$, the parameter \eqref{stability_vector2} can be written as a linear combination $\sum_{j \in I_f} \lambda^j \, Q_j$ with positive scalar coefficients, i.e. $\sigma$ is in the interior of the cone $\Delta_{I_f}$. 
\end{lemma}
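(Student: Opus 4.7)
The plan is to exploit the limit $\sigma \to \tau\, \text{Vol}\, M$ as $e^2 \to \infty$ together with the openness of the interior of $\Delta_{I_f}$. The strategy reduces the lemma to two subclaims: (i) $\tau$ itself lies in $\text{int}(\Delta_{I_f})$, and (ii) this interior is an open subset of $\mathbb{R}^k$, not merely of $S_{I_f}$. Once both are in hand, a continuity argument applied to the explicit formula \eqref{stability_vector2} gives $\sigma \in \text{int}(\Delta_{I_f})$ for all sufficiently large $e^2$, which is the claim.

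For subclaim (i) I would first produce a point $x_0 \in M$ at which $I_{\phi(x_0)} = I_f$. Such a point exists because for each $j \in I_f$ the section $\phi_j = f^\ast s_j$ is a nonzero holomorphic section of the holomorphic line bundle $L_j$, so its zero locus is a proper analytic subvariety of $M$; the finite union of these subvarieties cannot cover $M$. Choosing local unitary frames for the $L_j$ around $x_0$ identifies $(\phi_1(x_0), \ldots, \phi_n(x_0))$ with a vector $z \in \CC^n$ whose support equals $I_f$, and this $z$ is a lift of $f(x_0) \in X$. Since $f(x_0) \in X = B_\tau / (\CC^\ast)^k$, the lift $z$ necessarily lies in the stable set $B_\tau$, and by the toric-GIT dictionary already used in the proof of Proposition \ref{toric_moduli_space} (compare \eqref{sympl-git-quotients} and Lemma 7.2 of \cite{Kirwan}), $z \in B_\tau$ holds exactly when $\tau$ is expressible as a strictly positive linear combination of $\{Q_j : j \in I_z\}$. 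Specialising to $I_z = I_f$ yields $\tau \in \text{int}(\Delta_{I_f})$.

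For subclaim (ii) I would invoke condition (H1). Subclaim (i) shows in particular that $\tau \in S_{I_f}$, and (H1) forbids $\tau$ from lying in any proper subspace spanned by a proper subset of the weights; hence $S_{I_f} = \mathbb{R}^k$. Consequently $\text{int}(\Delta_{I_f})$ is an open neighbourhood of $\tau\, \text{Vol}\, M$ in all of $\mathbb{R}^k$, while the perturbation term $-\tfrac{2\pi m}{e^2}\, \tfrac{c_1(P)^\parallel}{[\omega_M]}\, \text{Vol}\, M$ in $\sigma$ is of order $1/e^2$, so $\sigma$ remains in this neighbourhood once $e^2$ is large enough. The only minor technical point is that the identification of the tuple $(\phi_1(x_0), \ldots, \phi_n(x_0))$ with a point of $\CC^n$ depends on the choice of frames, but since changing frames only rescales the components by nonzero complex numbers, both the support index set $I_f$ and the $(\CC^\ast)^k$-orbit of $z$ are unaffected. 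I do not foresee a serious obstacle; the whole argument is essentially a combination of the standard GIT description of $X$ with a continuity estimate, and the hypothesis (H1) is what rescues subclaim (ii) from being vacuous.
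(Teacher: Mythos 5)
Your proof is correct, and the endgame (openness of the interior of $\Delta_{I_f}$ in $\mathbb{R}^k$ plus $\sigma \to \tau\,\text{\rm Vol}\,M$ as $e^2\to\infty$) is exactly the paper's. Where you diverge is in how you establish $\tau \in \text{int}(\Delta_{I_f})$. The paper only uses that $f(M)\subseteq \bigcap_{j\notin I_f}X_j$ is nonempty: lifting an arbitrary point of this intersection to $\mu^{-1}(0)$ and reading off the moment map equation yields $\tau$ as a strictly positive combination over some subset $I\subseteq I_f$ (namely the support of that particular lift), and it then needs the convex-geometry Lemma \ref{aux_lemma1} of the Appendix, which invokes (H1), to enlarge $I$ to all of $I_f$. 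You instead choose the point carefully: since each $\phi_j=f^\ast s_j$ with $j\in I_f$ is a nonzero holomorphic section, the union of their zero loci is a proper analytic subset, so there is an $x_0$ with $I_{\phi(x_0)}=I_f$ exactly, and the GIT characterization of $B_\tau$ at that single point gives $\tau\in\text{int}(\Delta_{I_f})$ in one step, with no enlargement lemma. The trade-off is that your route leans on the complex-analytic genericity of $x_0$ (harmless here, since the $\phi_j$ are holomorphic), whereas the paper's route is purely convex-geometric once nonemptiness is known and so would survive in settings where one cannot control the support of $\phi(x)$ pointwise. You are also more explicit than the paper about why (H1) forces $S_{I_f}=\mathbb{R}^k$, which is the (tacitly used) reason the final perturbation argument is legitimate even though the correction term $-\tfrac{2\pi m}{e^2}\,\tfrac{c_1(P)^{\parallel}}{[\omega_M]}\,\text{\rm Vol}\,M$ need not lie in $S_{I_f}$; making that point visible is a genuine improvement in exposition. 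The only place to tighten is the frame-independence remark: the relevant changes of frame are those induced from frames of the $P_a$, which act on the tuple $(\phi_1(x_0),\dots,\phi_n(x_0))$ through the weights $Q_j$, i.e.\ exactly by the $(\CC^\ast)^k$-action of \eqref{action5}; an arbitrary independent rescaling of each component would not preserve the orbit, so it is worth saying that the trivializations are the ones induced by a single point of the $(\CC^\ast)^k$-fibre over $f(x_0)$.
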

\begin{proof}
For each index $j=1, \ldots, n$ denote by  $B_j$ the subspace of $\CC^n$ determined by the equation $z_j = 0$. Since our toric manifold is the quotient $X= \mu^{-1}(0) / T^k$, the intersection $\mu^{-1}(0) \cap B_j$ projects down to a (possibly empty) submanifold $X_j \subset X$. From definition \eqref{natural_sections5} it is  clear that the section $s_j$ vanishes precisely along this submanifold $X_j$. In particular the pulback section $f^\ast s_j$ is identically zero if and only if the image $f(M)$ is contained in $X_j$. This means that
\[
f(M)\ \subseteq \ \bigcap_{j \not\in I_f} X_j \ .
\]
A fairly uncontroversial consequence of this relation is that the set on the right-hand-side is not empty. Upstairs on $\CC^n$, this means that the set $\cap_{j \not\in I_f} B_j$ intersects $\mu^{-1}(0)$. Having a look at the definition \eqref{moment_map5} of the moment map, this clearly implies that there exists a  subset $I \subset I_f$  such that it is possible to write $\tau = \sum_{j \in I} \lambda'^j \, Q_j$ with positive scalar coefficients. The subset $I$ is not empty because of assumption (H1). By Lemma \ref{aux_lemma1} in the Appendix, it is also possible to write $\tau = \sum_{j \in I_f} \lambda''^j \, Q_j$ with positive scalar coefficients, so summing through the whole $I_f$. Finally, it is apparent from definition \eqref{stability_vector2} that for sufficiently large $e^2$ the stability parameter $\sigma$ is arbitrarily close to the value $\tau \, \text{\rm Vol}\,M$ in $\mathbb{R}^k$. Thus for big enough $e^2$ it is possible to write this parameter as a linear combination $\sigma = \sum_{j \in I_f} \lambda^j \, Q_j$ with positive scalar coefficients, as desired.
\end{proof}
\noindent
We have described above how to obtain a vortex solution in $\MM_\xi$ from a holomorphic map $f: M \longrightarrow X$. That solution is defined by putting $\phi_j = f^\ast s_j$. Now we have to show that it belongs to the image $\mathcal{I}_\xi$ as defined in the statement of Proposition \ref{embedding}.
\begin{lemma}
Assume condition (H1). Then the vector $\tau$ is in the interior of the cone $\Delta_{I_{f(x)}}$ for every point $x \in M$, where $I_{f(x)}$ is the index subset  $\{ j\in \{1, \ldots, n\} : \,   f^\ast s_j (x) \neq 0 \}$.
\end{lemma}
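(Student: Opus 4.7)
The plan is to lift $f(x)$ to a representative in $\mu^{-1}(0) \subset \CC^n$ and then read off the desired positive expansion of $\tau$ directly from the moment-map equation \eqref{moment_map5}. Fix $x \in M$ and put $p := f(x) \in X$. Since $X = \mu^{-1}(0)/T^k$, choose a lift $z = (z^1, \ldots, z^n) \in \mu^{-1}(0)$ of $p$ under the quotient projection; its existence is guaranteed by condition (H1), which places $\tau$ in the cone $\Delta$ and thus makes $X$ nonempty and smooth.

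The first step is to identify which indices survive, i.e.\ to show that
\[
I_{f(x)} \;=\; \{\, j \in \{1, \ldots, n\} : z^j \neq 0 \,\}.
\]
From \eqref{natural_sections5}, the section $s_j$ of the line bundle $\otimes_{a=1}^k (H_a)^{Q_j^a}$ corresponds to the function $z \mapsto z^j$ on $\mu^{-1}(0)$, which under the action \eqref{action5} transforms with weight $Q_j$. Hence the vanishing locus of $s_j$ downstairs is the image of $\mu^{-1}(0) \cap \{z^j = 0\}$; in particular $s_j(p) = 0$ if and only if the $j$-th coordinate of \emph{any} (equivalently, every) lift of $p$ is zero, since multiplication by $\prod_a (g_a)^{Q_j^a} \in \CC^\ast$ preserves the zero set of $z^j$. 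This gives the displayed equality, because $f^\ast s_j(x) = s_j(p)$.

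The second step is then immediate: restricting the moment-map equation $\mu(z) = 0$, namely $\sum_{j=1}^n |z^j|^2\, Q_j = \tau$, to the indices where $z^j \neq 0$ yields
\[
\tau \;=\; \sum_{j \in I_{f(x)}} |z^j|^2 \, Q_j,
\]
a linear combination of the weights $\{Q_j : j \in I_{f(x)}\}$ with \emph{strictly positive} scalar coefficients $|z^j|^2 > 0$. By the characterization of the interior of the cone recalled just before Theorem \ref{Hitchin_Kobayashi} (Lemma \ref{aux_lemma2} in the Appendix), this is exactly the statement that $\tau$ lies in the interior of $\Delta_{I_{f(x)}}$, which is what we wanted.

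The only mild subtlety is the bookkeeping in the first step — matching the section $s_j$ with the coordinate function $z^j$ and verifying that its vanishing is invariant under the complexified torus action, so that the index set $I_{f(x)}$ depends only on $p$ and not on the chosen lift. Once this is cleanly set up, the moment-map equation does all of the remaining work, and condition (H1) is only needed to ensure that lifts in $\mu^{-1}(0)$ exist in the first place.
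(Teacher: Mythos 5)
Your proof is correct, and it takes a slightly sharper route than the paper's. The paper argues (following the pattern of Lemma \ref{index5}) that $f(x)$ lies in $\bigcap_{j \notin I_{f(x)}} X_j$, so that the coordinate plane $\bigcap_{j \notin I_{f(x)}} B_j$ merely \emph{intersects} $\mu^{-1}(0)$; the intersection point may have additional zero coordinates, so one only obtains $\tau = \sum_{j \in I} \lambda^j Q_j$ for some nonempty subset $I \subseteq I_{f(x)}$, and the paper then invokes Lemma \ref{aux_lemma1} (under (H1)) to enlarge the index set to all of $I_{f(x)}$. You instead work with an actual lift $z \in \mu^{-1}(0)$ of $f(x)$ and observe — correctly, since the vanishing of $z^j$ is invariant under the $(\CC^\ast)^k$-action by \eqref{action5} and \eqref{natural_sections5} — that the nonzero coordinates of $z$ are \emph{exactly} indexed by $I_{f(x)}$; the moment-map equation \eqref{moment_map5} then hands you the strictly positive expansion of $\tau$ over the full set $I_{f(x)}$ in one step, with Lemma \ref{aux_lemma2} supplying the translation to "interior of the cone." This buys you independence from Lemma \ref{aux_lemma1}, and in fact your argument uses (H1) only in the vestigial sense you note (lifts to $\mu^{-1}(0)$ exist because the quotient map is surjective by construction). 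The two proofs are otherwise built from the same ingredients.
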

\begin{proof}
The proof is similar to the one of Lemma \ref{index5}. Using the notation and the arguments of that proof, we have that
\[
f(x)\ \in \ \bigcap_{j \not\in I_{f (x)}} X_j \ .
\]
Upstairs on $\CC^n$ this means that the set $\cap_{j \not\in I_{f (x)}} B_j$ intersects $\mu^{-1}(0)$. Condition (H1) and the definition \eqref{moment_map5} of the moment map then imply that there exists a non-empty subset $I \subseteq I_{f(x)}$ such that it is possible to write $\tau = \sum_{j \in I} \lambda'^j \, Q_j$ with positive scalar coefficients. By Lemma \ref{aux_lemma1} in the Appendix, the same is true for the whole $I_{f(x)}$.
\end{proof}

So far we have constructed a map $\mathcal{H}_\xi \rightarrow  \mathcal{I}_\xi$. Now we will describe its inverse, therefore showing that it is injective.

\begin{lemma}
 The map of Proposition \ref{embedding} has a holomorphic inverse  $\mathcal{I}_\xi \rightarrow  \mathcal{H}_\xi$ defined on its image.
\end{lemma}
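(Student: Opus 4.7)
The plan is to construct, for each vortex class $[A, \phi] \in \mathcal{I}_\xi$, a holomorphic map $g_{[A, \phi]}: M \rightarrow X$ by suitably interpreting the ``projective'' formula $g_{[A, \phi]}(x) = [\phi_1(x), \ldots, \phi_n(x)]$ through the complex quotient description $X = B_\tau /(\CC^\ast)^k$ of \eqref{complex_quotient}. The key move is to pass to the complex picture: by Theorem \ref{Hitchin_Kobayashi}, the pair $(A, \phi)$ can be replaced, within its $T^k$-gauge class, by a representative satisfying only the first and third vortex equations; equivalently, we may regard $A$ as defining a holomorphic structure on the complexified bundle $P^{\CC} := P \times_{T^k} (\CC^\ast)^k$, and $\phi = (\phi_1, \ldots, \phi_n)$ as a holomorphic section of $\oplus_j L_j$. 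Standard associated-bundle yoga identifies this datum with a $(\CC^\ast)^k$-equivariant holomorphic map $\tilde{\phi}: P^{\CC} \rightarrow \CC^n$, where $(\CC^\ast)^k$ acts on $\CC^n$ with the weights $Q_j$.

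Next I would verify that the image of $\tilde\phi$ lies in the stable subset $B_\tau \subset \CC^n$. The assumption $[A, \phi] \in \mathcal{I}_\xi$ gives, for every $x \in M$, that $\tau$ lies in the interior of the cone $\Delta_{I_{\phi(x)}}$; reading off the definition \eqref{moment_map5} of the moment map and arguing as in the proof of Proposition \ref{toric_moduli_space}, this is exactly the condition that the $(\CC^\ast)^k$-orbit through $(\phi_1(x), \ldots, \phi_n(x))$ meets $\mu^{-1}(0)$, i.e.\ that $\tilde\phi(x)$ lies in $B_\tau$. Composing with the invariant holomorphic projection $\pi: B_\tau \rightarrow X$ therefore yields a holomorphic map $g_{[A, \phi]}: M \rightarrow X$. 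This descends to gauge equivalence classes because a $(\CC^\ast)^k$-gauge transformation of $(A, \phi)$ modifies $\tilde\phi$ by the $(\CC^\ast)^k$-action on $\CC^n$, which is precisely quotiented out in $X$. The identity $g_{[A, \phi]}^{\ast} s_j = \phi_j$ for the canonical sections $s_j$ of \eqref{natural_sections5} then follows tautologically from the construction, which in turn gives $g_{[A, \phi]}^{\ast} H_a \simeq P^a$ as holomorphic line bundles, and so $g_{[A, \phi]}^{\ast} c_1(H_a) = c_1(P^a) = \xi_a$, placing the map in $\mathcal{H}_\xi$.

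Finally, to verify that this is the desired inverse, I would simply compose it with the construction of Proposition \ref{embedding}. Starting from $f \in \mathcal{H}_\xi$, the recipe of Proposition \ref{embedding} produces $[A, \phi]$ with $\phi_j = f^{\ast} s_j$; unwinding the definitions, the corresponding equivariant map $\tilde\phi$ is just $f$ composed with the tautological lift, so $g_{[A, \phi]} = f$. Conversely, starting from $[A, \phi] \in \mathcal{I}_\xi$, the composition in the opposite order gives back the original class via $\phi_j = g_{[A, \phi]}^{\ast} s_j$. The main obstacle is to maintain a scrupulously consistent dictionary between the associated-bundle picture (sections $\phi_j$ of $L_j$) and the equivariant map picture (the map $\tilde\phi$), and to track the effect of $T^k$- and $(\CC^\ast)^k$-gauge transformations through both; once this dictionary is set up, holomorphicity, well-definedness on gauge orbits and the inverse property are essentially formal. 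A secondary and routine check is that the zero set of $\phi_j$ corresponds under $g_{[A, \phi]}$ to the preimage of the toric hypersurface $X_j \subset X$ on which $s_j$ vanishes, which is what matches the construction with the explicit description of $\mathcal{I}_\xi$ in Proposition \ref{embedding}.
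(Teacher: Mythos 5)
Your proposal is correct and follows essentially the same route as the paper: interpret $\phi$ as an equivariant map into $\CC^n$, use the defining condition of $\mathcal{I}_\xi$ to see that its values lie in the stable set $B_\tau$, and compose with the holomorphic projection $B_\tau \rightarrow X$ to obtain the inverse, checking $g^\ast s_j = \phi_j$. The extra care you take with the equivariant-map dictionary and with verifying the inverse property in both directions only makes explicit what the paper leaves as "straightforward to check."
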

\begin{proof}
As before, denote by $B_\tau \subset \CC^n$ the open, dense and $(\CC^\ast)^k$-invariant subset of points $(z_1, \ldots, z_n)$ such that the parameter $\tau$ is in the positive cone generated by $\{Q_j : j \in I_z  \}$, with $I_z := \{ j\in \{1, \ldots, n\} : \   z_j \neq 0 \}$. The representation \eqref{complex_quotient} of the toric manifold $X$ shows that there is a standard holomorphic projection $\pi : B_\tau \longrightarrow X$. A vortex solution $[A, \phi_1, \ldots, \phi_n]$ belongs to the subspace $\mathcal{I}_\xi$ precisely if the the images $(\phi_1 (x), \ldots, \phi_n (x))$ are in $B_\tau$ for all $x \in M$. So for any such vortex solution the composition $f_\phi := \pi \circ \phi$ is a map $M \longrightarrow X$. It is a holomorphic map because of the first vortex equation, i.e. because it is possible to find local trivializations of the line bundles such that the sections $\phi_j$ are all holomorphic. It is straightforward to check  that the line bundles $(f_\phi)^\ast H_a$ have Chern class $\xi_a$ and that, for all $x \in M$, we have 
\[
(f_\phi)^\ast s_j (x) \ =  \ \phi_j (x) \ . 
\]
Thus the image of $f_\phi$ by the map $\mathcal{H}_\xi \rightarrow  \mathcal{I}_\xi$ is gauge equivalent to the original vortex solution, and we have constructed the desired inverse holomorphic map $\mathcal{I}_\xi \rightarrow  \mathcal{H}_\xi$.
\end{proof}

\subsubsection*{Generic vortex solutions and holomorphic maps}

In this paragraph we ask whether the space $\mathcal{H}$ of holomorphic maps to toric targets embedds as an open and dense subset the vortex moduli space $\MM$. A general criterion for this to happen is presented. When the base $M$ is a projective space this criterion is much simpler. 

Consider the stable subset $B_\tau \subset \CC^n$. It can be described both as the set of orbits $(\CC^\ast)^k \cdot \mu^{-1} (0)$, or as the set of points $(z_1, \ldots, z_n)$ in $\CC^n$ such that the parameter $\tau$ is in the positive cone generated by $\{Q_j  \in \mathbb{R}^k :  z_j \neq 0\}$. If assumption (H1) is satisfied, it is a consequence of Lemma \ref{aux_lemma1} that $B_\tau$ is open and dense in $\CC^n$. The complement  $\CC^n \setminus B_\tau$ is a union of certain coordinate planes $E_\alpha$ in $\CC^n$ of different codimensions. Each of these planes is defined by a set of equations 
\begin{equation}\label{plane_equation}
z_{j_1 (\alpha)} \ =\  \cdots \ =\  z_{j_{n-l} (\alpha)} \ =\  0 \ ,
\end{equation}
 where $l = l (\alpha) \geq 0$ is the complex dimension of the plane.  Now,  a vortex solution $(A, \phi_1 ,  \ldots, \phi_n)$ is in the image $\mathcal{I}_\xi  \subset \MM_\xi$ of the embedding if and only if for every point $x\in M$ the value $\phi (x)$ is in the subset $B_\tau$; more precisely,  if and only if, for every plane $E_\alpha$, the section $\phi_{j_1 (\alpha)} \oplus \cdots \oplus \phi_{j_{n-l} (\alpha)}$ never vanishes. So we have:

\vspace{.2cm}

\noindent
{\it Represent the complement $\CC^n \setminus B_\tau$ as a union $\cup_\alpha E_\alpha$ of maximal planes defined by equations of the form \eqref{plane_equation}. Then the space of holomorphic maps $\mathcal{H}_\xi$ embeds as an open dense subset of the vortex moduli space $\MM_\xi $ if and only if, for all $\alpha$, the generic holomorphic section of $L_{j_1 (\alpha)} \oplus \cdots \oplus L_{j_{n-l} (\alpha)}$ never vanishes.}

\vspace{.2cm}

A more concrete result can be stated when the base $M$ is a projective space. To make this statement, consider again the maximal coordinate planes $E_\alpha$ in $\CC^n \setminus B_\tau$ and the corresponding sub-bundles $L_{j_1 (\alpha)} \oplus \cdots \oplus L_{j_{n-l} (\alpha)}$. To each such plane associate the integer $s_{\alpha}$ defined by the following two conditions:

\vspace{.2cm}

\noindent
{\bf (i)} $\ \ \  \; s_{\alpha} \ = \  - \infty  \qquad \quad   \text{if}\ L_{j_r (\alpha)}\  \text{is trivial for some}\ 1 \leq r  \leq n - \dim E_\alpha $ ;

\vspace{.2cm}

\noindent
{\bf (ii)} $\ \ \  s_{\alpha} \ = \ \dim E_\alpha \ + \ \# \big\{ 1 \leq r \leq  n- \dim E_\alpha : \ H^0(M; L_{j_r (\alpha)}) = 0 \big\} \qquad               \text{otherwise} \  $.

\vspace{.2cm}

\noindent
This integer is well defined, for when $M$ is simply connected the dimension of the space of holomorphic sections $H^0(M; L_{j_r})$ is an unambiguous characteristic of the only holomorphic structure on $L_{j_r}$. Now consider the maximum value
\begin{equation}
s \ := \ \max_\alpha  \big\{ \,  s_{\alpha} \,  \big\}  
\end{equation}
among all planes $E_\alpha$.  This maximal integer $s$ depends only on the constants $Q^a_j$ and $\tau$ that determine the quotient $X$, and on the cohomology class $\xi$ that determines the $L_j$'s. Then we have the following result.
\begin{proposition}\label{open_embedding_condition}
Assume that $M$ is a projective space and that (H1) is satisfied. Then the space of holomorphic maps $\mathcal{H}_\xi$ embeds as an open dense subset of the vortex moduli space $\MM_\xi $ if and only if  $n-s > \dim_\CC M$.
\end{proposition}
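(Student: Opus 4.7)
The plan is to invoke the criterion displayed in italics just before the proposition, which reduces the open-density question to asking, for each maximal coordinate plane $E_\alpha \subset \CC^n \setminus B_\tau$, whether the generic holomorphic section of the sub-bundle
\[
\mathcal{E}_\alpha \ := \ L_{j_1(\alpha)} \oplus \cdots \oplus L_{j_{n-l}(\alpha)} \ \longrightarrow \ M
\]
is everywhere nonvanishing on $M = \CC\mathbb{P}^m$. I would then translate this into a clean numerical criterion by exploiting the simple structure of line bundles on projective space.

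Since $\text{\rm Pic}(\CC\mathbb{P}^m) \cong \mathbb{Z}$, each $L_{j_r(\alpha)}$ equals $\mathcal{O}(d_{j_r})$ for a unique integer $d_{j_r}$. I would split the analysis of a given plane $E_\alpha$ into cases. First, if some summand is trivial ($d_{j_r} = 0$), the constant nonzero section in that factor makes the generic section of $\mathcal{E}_\alpha$ nowhere zero, so $E_\alpha$ never obstructs open density; this is exactly the case $s_\alpha = -\infty$. Otherwise, summands with $d_{j_r} < 0$ satisfy $H^0(M; L_{j_r}) = 0$, contribute an identically zero component, and impose no constraint on the common vanishing locus, while summands with $d_{j_r} > 0$ contribute sections vanishing along hypersurfaces.

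The main technical input is Bertini's theorem on $\CC\mathbb{P}^m$: each $\mathcal{O}(d_{j_r})$ with $d_{j_r} > 0$ is very ample, hence its restriction to any smooth subvariety is still very ample and base-point-free, so iterated application of Bertini shows that a generic element of $\prod_{r : d_{j_r}>0} H^0(M;\mathcal{O}(d_{j_r}))$ cuts out a smooth complete intersection of the expected codimension
\[
P_\alpha \ := \ \#\{\, r : d_{j_r} > 0 \,\} \ .
\]
This common zero locus is empty precisely when $P_\alpha > \dim_\CC M$, and is a nonempty smooth subvariety of dimension $m - P_\alpha$ when $P_\alpha \le \dim_\CC M$; in the latter situation the generic section of $\mathcal{E}_\alpha$ therefore does vanish somewhere, so the density of $\mathcal{I}_\xi$ fails at $E_\alpha$.

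Combining the cases, a plane $E_\alpha$ with no trivial summand imposes no obstruction if and only if
\[
P_\alpha \ = \ (n - l) \, - \, \#\{\, r : H^0(M; L_{j_r(\alpha)}) = 0 \,\} \ = \ n - s_\alpha \ > \ \dim_\CC M \ ,
\]
an inequality that is also vacuously true when $s_\alpha = -\infty$. Requiring this for every $\alpha$ is equivalent to $n - \max_\alpha s_\alpha > \dim_\CC M$, i.e.\ $n - s > \dim_\CC M$. The delicate point to verify carefully is the genericity of the complete-intersection dimension count underlying the Bertini step (including the base case $P_\alpha = m+1$, where one must note that a generic section of a base-point-free line bundle misses any prescribed finite set of points on $\CC\mathbb{P}^m$); all remaining ingredients are straightforward bookkeeping between the indices defining $E_\alpha$ and the definition of $s_\alpha$.
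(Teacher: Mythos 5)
Your proposal is correct and follows essentially the same route as the paper: it reduces the question to the italicized criterion on the maximal coordinate planes $E_\alpha$, separates the summands into trivial, negative-degree (identically zero section), and positive-degree cases, and concludes via the fact that $n-s_\alpha$ generic divisors on $\CC\mathbb{P}^m$ have empty intersection precisely when $n-s_\alpha > m$. Your explicit appeal to Bertini to justify the expected-codimension count is just a spelled-out version of the intersection-theoretic fact the paper invokes, not a different argument.
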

\begin{proof}
As stated before, a vortex solution $(A, \phi_1 ,  \ldots, \phi_n)$ is in the image $\mathcal{I}_\xi  \subset \MM_\xi$ of the embedding if and only if for every point $x\in M$ the value $\phi (x)$ is in the stable set $B_\tau$. Now consider a maximal coordinate plane $E_\alpha$ in the complement $\CC^n \setminus B_\tau$. Without loss of generality, we may assume that $E_\alpha$ is defined by the set of equations $z_{1} = \cdots = z_{{n-l(\alpha)}} = 0$, where $l(\alpha)$ denotes the dimension of $E_\alpha$. The solution $(A, \phi)$ belongs to $\mathcal{I}_\xi $ only if the  sections $\phi_{1}, \ldots, \phi_{{n-l(\alpha)}}$ do not have a common zero; otherwise, at that common zero, $\phi (x)$ would be in the plane $E_\alpha$, so outside $B_\tau$. Among these sections, a certain number will always be identically zero, because $H^0(M; L_{r(\alpha)})$ may be zero, i.e the bundle $L_{r(\alpha)}$ may have negative degree. This will happen for 
\[
m_{\alpha} \ := \ \# \big\{ 1 \leq r \leq  n- l(\alpha) : \ H^0(M; L_{r}) = 0 \big\} \ = \ s_{\alpha} \, - \, l(\alpha)
\]
sections. After relabeling if necessary, we can assume that the sections that are always zero are $\phi_{{n- s_\alpha +1}}, \ldots, \phi_{{n-l(\alpha)}}$. The remaining sections $\phi_{1}, \ldots, \phi_{{n-s_\alpha}}$
will generically be non-zero, and will vanish along a divisor in $M$ or will not vanish at all. So the condition that $\phi (x)$ lies outside the plane $E_\alpha$ for all $x \in M$ is reduced to the condition that the intersection of the corresponding zero-sets $Z (\phi_1) \cap \ldots \cap Z (\phi_{n-s_{\alpha}})$ is empty in $M$. If any of the bundles $L_1,  \ldots, L_{n-s_{\alpha}}$ is trivial, then its generic section will not vanish at all in $M$, and hence the intersection of the zero-sets will always be empty. If all the bundles $L_1,  \ldots, L_{n-s_{\alpha}}$ have positive degree -- the remaining case -- then the intersection of zero-sets will be an intersection of $n-s_{\alpha}$ divisors in $M$. When $M$ is a projective space, this last intersection is generically empty iff $n-s_{\alpha} > \dim_\CC M$. Thus a generic $\phi$ has values outside the plane $E_\alpha$ iff $n-s_{\alpha} > \dim_\CC M$. Doing the same for all other planes $E_\alpha$, we conclude that a generic $\phi$ has values outside the union $\cup_\alpha E_\alpha$ if and only if $n- s > \dim_\CC M$, where $s$ is the maximum of the $s_\alpha$'s.
\end{proof}
\begin{example}
When the target $X$ is also a projective space $\CC\mathbb{P}^{n-1} = (\CC^n - \{0\}) /  \CC^\ast$, there is only one plane $E$: the origin $\{0\}$. Thus $l_E = \dim E = 0$. As described in Section 5.1, the space $\mathcal{H}$ of holomorphic maps then embeds in the vortex moduli space of the GLSM with group $U(1)$ and integer weights $Q_1^1 = \cdots = Q_n^1 = 1$. In this case the line bundles $L_1, \ldots, L_n$ are all isomorphic, so if they are positive we have that $m_E = 0$ and $s = 0$. In this case $\mathcal{H}$ is open and dense in $\MM$ if and only if $n > \dim_\CC M$.
\end{example}

\subsection{Conjectures about  the $L^2$-metrics}

Consider again the spaces $\mathcal{H}_\xi$ of holomorphic maps $M \rightarrow X$ to toric targets. These spaces have a natural complex structure induced by the complex structures of the domain $M$ and of the target $X$. They also have a compatible K\"ahler metric $\omega_{\mathcal{H}}$ determined by the norm
\begin{equation}\label{l2_metric}
\| \dd f \|_{\omega_{\mathcal H}}^2 \ := \ \int_{(M, g_M)} |\dd f|^2 \ = \  \int_{(M, g_M)}  (g_M)^{\alpha \beta}\, (\partial_{\alpha} f^{j})\, (\partial_{\beta} f^{l})\, (g_{X})_{lj} \ .
\end{equation}
This is usually called the $L^2$-metric on $\mathcal{H}$. 
It is defined also for more general spaces of maps between Riemannian manifolds, not necessarily holomorphic maps. Since one can regard $\mathcal{H}$ as embedded in the vortex moduli space $\MM$, and the latter space has the natural $L^2$-metric $\omega_\MM$, it is natural to ask whether this is an isometric embedding or not. In \cite{Bap2010} it was argued that this should be the case if: {\bf (1)} one takes the metric $g_X$ on the toric target to be the symplectic quotient of the standard euclidean metric on $\CC^n$ determined by the moment map \eqref{moment_map5}, and,  {\bf (2)} one takes the strong coupling limit $e^2 \rightarrow \infty$. (Recall that both the vortex equations and the metric $\omega_\MM$ on the moduli space are $e^2$-dependent. The moduli space itself, i.e. the complex manifold $\MM$, is not, at least for $e^2$ large enough.) The heuristic arguments of that reference should hold also in the case of higher-dimension $M$, so we have:

\begin{conjecture}
Take the metric on $\mathcal{H}_\xi$ induced by the metric \eqref{vortexmetric} on the vortex moduli space through the embedding  $\mathcal{H}_\xi \hookrightarrow  \MM_\xi$ of Proposition \ref{embedding}. Then in the limit $e^2 \rightarrow \infty$ this metric converges to the natural $L^2$-metric \eqref{l2_metric} on $\mathcal{H}_\xi$.\footnote{This conjecture has now been proved in the case of maps from Riemann surfaces to projective space. The proof is presented in \cite{Liu}, and appeared on the arXiv after the first version of the present article.}
\end{conjecture}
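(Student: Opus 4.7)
The plan is to follow the scheme that has succeeded for $M$ a Riemann surface in \cite{Liu}, and extend it to higher-dimensional $M$ by a suitable adiabatic analysis in $e^2$. First I would fix a smooth one-parameter family $f_t \in \mathcal{H}_\xi$ of holomorphic maps, construct the corresponding family $(A_t^{e^2}, \phi_t^{e^2})$ of vortex solutions via the Hitchin--Kobayashi correspondence of Theorem \ref{Hitchin_Kobayashi}, compute the horizontal component of the tangent vector $(\dot A_t^{e^2}, \dot\phi_t^{e^2})$ at $t=0$, plug it into the $L^2$-formula \eqref{vortexmetric}, and then pass to the limit $e^2 \to \infty$.

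Concretely, starting from the natural initial data $\phi_{j,t}^0 = f_t^\ast s_j$ and the integrable connection $A_t^0$ on $P = \times_a f_t^\ast H_a$, Theorem \ref{Hitchin_Kobayashi} produces a unique (up to $T^k$-gauge) complex gauge transformation $g_t^{e^2}$ such that $(g_t^{e^2}(A_t^0), g_t^{e^2}(\phi_t^0))$ solves all three vortex equations. The key point is that, if the metric on $X$ is taken to be the symplectic quotient of the standard flat metric on $\CC^n$, then the initial section $\phi_t^0$ already satisfies the pointwise moment-map identity $\sum_j Q_j |\phi_{j,t}^0|^2 = \tau$ away from its zero set. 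Formally, in the limit $e^2 \to \infty$ the second vortex equation then forces $g_t^\infty \equiv \mathrm{id}$, and the limiting vortex solution is simply the pullback data. The first technical step is to make this rigorous: show that $g_t^{e^2} \to \mathrm{id}$ and $(A_t^{e^2}, \phi_t^{e^2}) \to (A_t^0, \phi_t^0)$ in a suitable norm, with quantitative decay rates uniform for $t$ in a compact interval.

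The next step is to analyze the horizontal tangent vector at $t=0$. Differentiating the vortex equations and $L^2$-projecting perpendicular to infinitesimal gauge orbits imposes, with the weights appearing in \eqref{vortexmetric}, a Coulomb-type condition of the schematic form
\begin{equation*}
\tfrac{1}{2 e^2}\, \dd_A^\ast \dot A \ + \ \mathrm{Im}\,\langle \dot\phi,\phi\rangle \ = \ 0 \ .
\end{equation*}
In the limit $e^2 \to \infty$ this reduces to the purely algebraic condition $\mathrm{Im}\,\langle \dot\phi,\phi\rangle = 0$, which is exactly the statement that $\dot\phi_t$ lies in the horizontal distribution of the principal bundle $\mu^{-1}(0) \to X$ along $\phi_t : M \to \mu^{-1}(0)$. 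Since $\phi_t = f_t^\ast s$, this horizontal lift coincides with $\dd f_t$ read in the trivialization provided by $s$, and its fibrewise squared norm in $\CC^n$ equals $|\dd f_t|^2$ computed in the K\"ahler quotient metric on $X$. Given uniform bounds on $\dot A_t^{e^2}$ obtained from elliptic estimates on the linearized vortex equations, the term $(4e^2)^{-1}|\dot A|^2$ in \eqref{vortexmetric} vanishes in the limit and one is left precisely with the $L^2$-integrand of $|\dd f_t|^2$ on $\mathcal{H}_\xi$.

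The main obstacle is the uniform-in-$e^2$ analysis required to make these convergences rigorous. Away from the zero locus of $\phi_t$ the linearized vortex equations in Coulomb gauge are uniformly elliptic and the desired estimates follow by standard methods. Near the zero locus, however, the vortex solutions develop an inner profile on the length scale $1/e$, and one must carry out a matched-asymptotic expansion (inner profile modelled on the planar Abrikosov vortex transverse to the zero divisor, outer profile given by the moment-map section) and show that the contribution of a $1/e$-tubular neighbourhood of $\phi^{-1}(0)$ to both the $|\dot\phi|^2$-integral and the $(4e^2)^{-1}|\dot A|^2$-integral is $o(1)$ as $e^2 \to \infty$. For higher-dimensional $M$ this zero locus is a positive-codimension subvariety rather than a finite collection of points, so this is the step where the argument of \cite{Liu} genuinely needs to be extended; it is the real work in the proof. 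Once these uniform estimates are in place, dominated convergence applied to the integrand in \eqref{vortexmetric} yields the desired pointwise identity of metrics on $\mathcal{H}_\xi$.
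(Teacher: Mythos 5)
This statement is a \emph{conjecture} in the paper: the author offers only the heuristic argument of \cite{Bap2010} (the moment-map identity forces the complex gauge transformation towards the identity as $e^2 \to \infty$, so the induced metric should degenerate to the $L^2$-metric on maps), and the footnote records that a proof exists only in the special case of maps from Riemann surfaces to projective space \cite{Liu}, obtained after the paper was written. So there is no proof in the paper to compare against, and your submission is likewise not a proof but a programme. Your outline is consistent with the heuristics in the paper and with the strategy of \cite{Liu}: pull back the data $\phi_j = f^\ast s_j$, invoke Theorem \ref{Hitchin_Kobayashi}, argue that the correcting gauge transformation tends to the identity, and identify the limiting horizontality condition $\mathrm{Im}\langle\dot\phi,\phi\rangle=0$ with the horizontal lift of $\dd f$ to $\mu^{-1}(0)$. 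That is the right skeleton. But every load-bearing step --- convergence $g_t^{e^2}\to\mathrm{id}$ with uniform rates, uniform elliptic estimates for the linearized equations in Coulomb gauge, the exchange of limit and $L^2$-projection --- is named rather than carried out, and these are exactly the points where the conjecture remains open for higher-dimensional $M$ and general toric targets. As it stands the proposal cannot be accepted as a proof of the statement.

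One concrete correction to your plan: the matched-asymptotic analysis near the zero locus of $\phi$, with an inner Abrikosov-type profile on scale $1/e$, is not where the difficulty of \emph{this} conjecture lies. The conjecture asserts pointwise convergence of the metric at points of $\mathcal{H}_\xi$, i.e.\ at vortex solutions lying in the image $\mathcal{I}_\xi$ of Proposition \ref{embedding}; for such solutions $\phi(x)$ stays in the stable set $B_\tau$ for every $x\in M$, so $\sum_j Q_j|\phi_j|^2$ is uniformly bounded away from the unstable locus and no curvature concentration occurs (this is precisely the dichotomy discussed after the conjecture in Section 5.3: $\Lambda F_A$ blows up only on $\MM_\xi\setminus\mathcal{I}_\xi$). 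The genuine analytic issue for a fixed $f\in\mathcal{H}_\xi$ is instead the uniform-in-$e^2$ control of the solution of the Kazdan--Warner-type equation governing $g_t^{e^2}$ and of its $t$-derivative, on a manifold where the relevant operators are uniformly elliptic. The degeneration near zero loci that you describe becomes relevant only for the stronger Conjecture \ref{volume_conjecture} on volumes, where one must integrate over all of $\MM_\xi$ including the boundary of $\mathcal{H}_\xi$.
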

\noindent
When $\mathcal{H}_\xi$ embeds as an open dense subset of the vortex moduli space,  the conjecture above says that as $e^2 \rightarrow \infty$ the pointwise limit of the vortex metric $\omega_\MM$ over the domain $\mathcal{H}_\xi \subset \MM_\xi$ coincides with the natural $L^2$-metric $\omega_{\mathcal{H}}$. On the complement $\MM_\xi \setminus \mathcal{H}_\xi$ the vortex metric $\omega_\MM$ may very well diverge in this limit. In fact, it is clear from the vortex equation
\[
\Lambda F_A \,- \, i e^2   \Big[  \big( \: \sum_j   Q_j \: |\phi^j|^2  \: \big)  - \tau \Big] \ = \ 0 
\]
that when $e^2$ goes to infinity the curvature $\Lambda F_A$ must explode at the points $x\in M$ where the argument of the square bracket cannot vanish, i.e. at the points such that $\tau$ is not in the interior of the cone $\Delta_{I_{\phi(x)}}$ (see Proposition \ref{embedding}). So $\Lambda F_A$ explodes if the vortex solution is not in the image $\mathcal{H}_\xi \subset \MM_\xi$. It is then plausible that, in the limit $e^2 \rightarrow \infty$, the metric $\omega_\MM$ also diverges outside $\mathcal{H}_\xi$. Moreover, there are known examples where the scalar curvature of $\omega_{\mathcal{H}}$ diverges as one approaches the boundary $\partial \mathcal{H}$.
 So if the conjecture is to hold, at least in these examples the limit of $\omega_\MM$ must also diverge at $\partial \mathcal{H}$. 

Assuming the validity of the conjecture above, one can go one step further by exchanging the limit with the volume integral and propose:

\begin{conjecture}\label{volume_conjecture}
When $\mathcal{H}_\xi$ embeds as an open dense subset of $\MM_\xi$, the volume of $(\mathcal{H}_\xi, \omega_{\mathcal{H}})$ coincides with the limit
\[
\text{\rm Vol}\, (\mathcal{H}_\xi, \omega_{\mathcal{H}}) \ = \ \lim_{e^2 \rightarrow + \infty} \: \text{\rm Vol}\, (\MM_\xi, \omega_{\MM })  \ .
\]
The total scalar curvature of $(\mathcal{H}_\xi, \omega_{\mathcal{H}})$ can also be obtained as the $e^2 \rightarrow  \infty$ limit of the total scalar curvature of $(\MM_\xi, \omega_{\MM})$.
\end{conjecture}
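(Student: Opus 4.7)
The plan is to reduce Conjecture \ref{volume_conjecture} to an interchange of limit and integral that can be handled using the preceding conjecture on pointwise convergence of $\omega_\MM$ to $\omega_\mathcal{H}$ on $\mathcal{H}_\xi$. First I would observe that the right-hand sides in Conjecture \ref{volume_conjecture} are purely cohomological. Since $\MM_\xi$ is a compact K\"ahler manifold of complex dimension $N$,
\[
\text{\rm Vol}\,(\MM_\xi, \omega_\MM) \ = \ \int_{\MM_\xi} \frac{[\omega_\MM]^N}{N!} \ ,
\]
and by Proposition \ref{kclass_glsm1} (or Proposition \ref{kclass_glsm2} in the abelian-variety case) $[\omega_\MM]$ is polynomial in $e^{-2}$. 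Hence $\lim_{e^2 \to \infty}\text{\rm Vol}\,(\MM_\xi, \omega_\MM)$ exists and equals the intersection number of the limiting class $[\omega_\MM^{(\infty)}] = \pi \sum_a \tau^a (\text{\rm Vol}\,M)\, \eta_a$. The Chern--Weil identity $\int s(\omega)\,\text{\rm vol} = 2\pi \int c_1(\MM_\xi) \wedge [\omega]^{N-1}/(N-1)!$ produces the analogous cohomological limit for the total scalar curvature.

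Next, the preceding conjecture provides pointwise convergence $\omega_\MM \to \omega_\mathcal{H}$ on $\mathcal{H}_\xi$, hence pointwise convergence of the volume forms $\omega_\MM^N/N! \to \omega_\mathcal{H}^N/N!$. Since the complement $\MM_\xi \setminus \mathcal{H}_\xi$ is contained in the proper analytic subvariety of vortex classes $[A,\phi]$ for which $\phi(x)$ fails to lie in the stable set $B_\tau$ at some $x \in M$, it has measure zero for any smooth volume form, so $\text{\rm Vol}\,(\MM_\xi, \omega_\MM) = \int_{\mathcal{H}_\xi} \omega_\MM^N/N!$. Fatou's lemma then yields the easy direction
\[
\text{\rm Vol}\,(\mathcal{H}_\xi, \omega_\mathcal{H}) \ \leq \ \liminf_{e^2 \to +\infty} \text{\rm Vol}\,(\MM_\xi, \omega_\MM)\ .
\]

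The hard part, and the main obstacle, is the matching upper bound. As the author warns, $\omega_\MM$ may diverge on $\MM_\xi \setminus \mathcal{H}_\xi$ in the strong-coupling limit, so no pointwise dominating function on $\mathcal{H}_\xi$ is manifestly available near $\partial \mathcal{H}_\xi$, and one must rule out concentration of the measures $\omega_\MM^N/N!$ along the boundary. I would attempt a concentration-compactness argument: exhaust $\mathcal{H}_\xi$ by compact subsets $K_\ell$ on which the Perutz formula \eqref{L2_form} gives uniform bounds on $|F_\mathcal{A}|$ in $e^2$, and use the bubbling structure forced by the second vortex equation -- where $\Lambda F_A$ concentrates along the zero loci of the sections whose vanishing pushes $\phi(x)$ toward $\partial B_\tau$ -- to show that $\int_{\MM_\xi \setminus K_\ell} \omega_\MM^N/N! \to 0$ uniformly in $e^2$ as $\ell \to \infty$. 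Quantifying this bubbling is the delicate analytic step. Once it is in place, dominated convergence closes the volume identity, and the scalar-curvature identity follows by the same scheme, with the caveat that $s(\omega_\mathcal{H})$ is typically unbounded near $\partial \mathcal{H}_\xi$; the limit statement is then naturally read as a cohomological regularization of an otherwise only conditionally convergent integral over $\mathcal{H}_\xi$.
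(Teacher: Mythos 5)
The statement you are proving is labelled a \emph{conjecture} in the paper, and the paper offers no proof of it: the author explicitly obtains it only ``by exchanging the limit with the volume integral'' on the strength of the preceding (also unproven) conjecture that $\omega_\MM$ converges pointwise to $\omega_{\mathcal H}$ on $\mathcal H_\xi$ as $e^2\to\infty$. Your proposal is therefore being measured against heuristics, not a proof, and it should be judged on whether it closes the argument on its own. It does not. The cohomological computation of the right-hand side is correct (the K\"ahler class of Propositions \ref{kclass_glsm1}--\ref{kclass_glsm2} is polynomial in $e^{-2}$, so the limit of $\text{\rm Vol}\,(\MM_\xi,\omega_\MM)$ exists and is an intersection number), and the Fatou inequality giving $\text{\rm Vol}\,(\mathcal H_\xi,\omega_{\mathcal H})\le\liminf \text{\rm Vol}\,(\MM_\xi,\omega_\MM)$ is sound \emph{granting} the pointwise-convergence conjecture. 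But that input is itself unproven in the paper (the footnote records a proof only for maps from Riemann surfaces to projective space, after the fact), and the decisive step in your scheme --- showing that the measures $\omega_\MM^N/N!$ do not concentrate mass on $\MM_\xi\setminus\mathcal H_\xi$ uniformly as $e^2\to\infty$ --- is only announced. You describe a concentration-compactness/bubbling strategy and then write that ``quantifying this bubbling is the delicate analytic step,'' which is precisely the content of the conjecture: without a uniform-in-$e^2$ bound on $\int_{\MM_\xi\setminus K_\ell}\omega_\MM^N/N!$ the upper bound fails and only the one-sided Fatou inequality survives. Indeed the paper itself warns that $\omega_\MM$ is expected to diverge on the complement of $\mathcal H_\xi$ in this limit, so a loss of volume to the boundary cannot be excluded by soft arguments.

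The scalar-curvature half of your proposal has a further problem: when $s(\omega_{\mathcal H})$ is unbounded near $\partial\mathcal H_\xi$ (which the paper notes does occur), the integral $\int_{\mathcal H_\xi}s(\omega_{\mathcal H})\,\mathrm{vol}$ need not be absolutely convergent, and reinterpreting it as a ``cohomological regularization'' changes the statement being proved rather than establishing it. In summary: your reduction correctly isolates where the difficulty lies and supplies the easy inequality, but the essential analytic estimate is missing, so the proposal is an outline of an attack on an open conjecture, not a proof.
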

\noindent
This conjecture was used in \cite{Bap2010} to propose explicit formulae for $\text{\rm Vol}\, (\mathcal{H}_\xi, \omega_{\mathcal{H}})$  and the total scalar curvature of $\mathcal{H}$ in the case where $M$ is a Riemann surface and $X$ is a projective space. The volume formula was checked by Speight through a direct computation in the special case of maps $\CC \mathbb{P}^1 \longrightarrow \CC \mathbb{P}^{n-1}$ of degree 1 \cite{Speight}. By the same token, here we can use the results of Section 4 to propose formulae for the volume of the space of maps $M \rightarrow \CC \mathbb{P}^{n-1}$ when $M$ is higher-dimensional.

\subsubsection*{Example}

Take $M$ to be simply connected with $H^2(M, \mathbb{Z}) \simeq  \text{\rm Pic} (M) \simeq \mathbb{Z}$ and the target $X$ to be the projective space $\CC\mathbb{P}^{n-1}$. We take the metric on $\CC\mathbb{P}^{n-1}$ to be $\pi \tau \, \omega_{FS}$, where $\tau > 0 $ and  $\omega_{FS}$ stands for the Fubini-Study form normalized so that its cohomology class generates $H^\ast (\CC\mathbb{P}^{n-1}, \mathbb{Z})$. Any holomorphic map $f: M \rightarrow \CC\mathbb{P}^{n-1}$ has a well defined integer degree $d$. If there exists a positive generator $E \rightarrow M$ of the Picard group and $H \rightarrow  \CC\mathbb{P}^{n-1}$ is the hyperplane bundle, the degree is determined by the equation
\[
f^\ast \, c_1(H) \ = \ d \cdot c_1 (E) \ 
\] 
in $H^2(M, \mathbb{Z})$. Proposition \ref{embedding} says that for large $e^2$ we have an embedding $\mathcal{H}_d \hookrightarrow \MM_d$. Observe that the degree on the vortex moduli space coincides with the degree defined in Example (2) of Section 2.2, as follows from \eqref{pullback_coh}. If the degree $d$ is negative, the moduli space $\MM_d$ is always empty, and hence so is the space of maps $\mathcal{H}_d$. If $d$ is zero, Proposition \ref{toric_moduli_space} says that $\MM_0 \simeq  \CC\mathbb{P}^{n-1}$, hence $\mathcal{H}_0$ is just the space of constant maps $M \rightarrow \CC\mathbb{P}^{n-1}$. If the degree $d$ is positive and $e^2$ is large, Proposition \ref{toric_moduli_space} says that $\MM_d \simeq \CC \mathbb{P}^{\, n r -1}$, where the integer $r$ is the complex dimension of  $H^0(M; E^d)$. 

Now specialize to the case where the base $M = \CC\mathbb{P}^{m}$ is also a projective space. Then according to Proposition \ref{open_embedding_condition} and the remark following it, $\mathcal{H}_d$ will embed as an open dense subset of $\CC \mathbb{P}^{\, n r -1}$ if and only if $n > \dim M$. If this last condition is satisfied we can use the volume computations of Section 4.3, together with Conjecture \ref{volume_conjecture}, to propose the formula
\[
\text{\rm Vol}\, (\mathcal{H}_d, \omega_{\mathcal{H}}) \ = \ \lim_{e^2 \rightarrow + \infty}\, \frac{(\pi \, \sigma)^{n\, r - 1}}{\big(n r -1 \big)!} \  =  \ \frac{(\pi \, \tau \, \text{\rm Vol}\, M)^{n\, r - 1}}{\big(n r -1 \big)!} \ .
\]
Here $r$ is the integer $(m+d)! / (m! \, d!)$, and we are assuming that $m$ and $d$ are positive. The total scalar curvature of $(\mathcal{H}_d, \omega_{\mathcal{H}})$ should be given by 
\[
\int_{\mathcal{H}_d}  s(\omega_{\mathcal{H}}) \ {\rm vol}_\mathcal{H} \ = \  \frac{2 \pi\, nr\,  (nr-1)}{\big[(nr-1)! \big]^{1/(nr-1)}} \ \big( \text{\rm Vol}\, \, \mathcal{H}_d \big)^{(nr-2)/(nr-1)}  \ .
\]

\vspace{.4cm}

\medskip
\noindent
\textbf{Acknowledgements.}\, I would like to thank CAMGSD and Project  PTDC/MAT/119689/2010 of FCT - POPH/FSE for a generous fellowship.

\vspace{.3cm}

\appendix

\section{Appendices}

\subsection{Limits of the vortex equations}

In this appendix we make very informal observations about the strong and weak coupling limits of the vortex equations, i.e. the limits where the constant $e^2$ is large or 
small. We will do so for the simplest abelian equations, namely equations \eqref{vortex1}-\eqref{vortex3} for vortices on a positive line bundle $L \longrightarrow M$.

\subsubsection*{Limit $e^2 \rightarrow \infty$}

In physics this is called the strong coupling limit. Start by noting that for positive $\tau$ the stability condition \eqref{stability1} is satisfied for arbitrarily large values of $e^2$. This means that, as a complex manifold, the vortex moduli space $\MM$ does not change for big and growing values of $e^2$. It always coincides with the space of effective divisors on $\MM$  carrying a homology class Poincar\'e-dual to $c_1(L)$.
Observe from expression \eqref{energy_vortex} that, when $e^2 \rightarrow \infty$, the energy of each vortex solution tends to the finite positive constant
\[
E_{vortex} \ \rightarrow  \ \int_M \frac{2\pi\, \tau}{(m-1)!}\, c_1(L) \ .
\]
On the other hand, looking at definition \eqref{energy_functional} of the functional $E(A, \phi)$, we recognize that for the energy to remain finite in this limit we must have
\[
(\vert \phi \vert^2 \ - \ \tau) \ \rightarrow \ 0
\]
almost everywhere on $M$. This is also suggested by the formal limit $e^2 \rightarrow \infty$ of the second vortex equation \eqref{vortex2}. Note, however, that over the effective divisor $D \subset M$ corresponding a vortex solution $(A, \phi)$, the section $\phi$ is always zero, independently of the value of $e^2$. So for each vortex solution the picture is that  the norm $\vert \phi \vert^2$ tends to $\tau$ everywhere on $M$ except along the corresponding divisor $D$, where $\vert \phi \vert^2$ remains zero. The second vortex equation
\[
i \,\Lambda F_A  \:  + \:  e^2\:  \big( |\phi|^2 - \tau \big) \ = \ 0 
\]
then implies that the curvature $\Lambda F_A$ must explode over $D$ in the strong coupling limit. The ideal picture would be that the curvature gradually concentrates around the divisor as $e^2 \rightarrow \infty$.
Lets now look at the volume of the moduli space in the example where $M$ is an abelian variety. As is explained in Section 2, in this case the moduli space is a projective bundle $\MM \longrightarrow \text{\rm Pic}^0 M$ over the Picard group. The ``size" of the fibres and base of this bundle is determined by expression \eqref{k_class2} for the K\"ahler class. As $e^2 \rightarrow \infty$, we see that the contribution of the fibres remains finite, while the contribution the base becomes smaller and smaller. So, metrically, the moduli space $\MM$ becomes a projective bundle over an infinitesimally small torus.

\subsubsection*{Limit $e^2 \rightarrow 0$ with constant $e^2\, \tau$}

Start by observing that if the stability condition \eqref{stability1} is satisfied for some finite values of $\tau$ and $e^2$, then it is also satisfied for arbitrarily small values of $e^2$ as long as $\tau \, e^2$ remains constant. Once again, this means that the moduli space $\MM$ does not change as a complex manifold when we approach this limit. Now observe that the formal limit of the second vortex equation is in this case
\[
\Lambda F_A  \:  - \:  i\, e^2\, \tau  \ = \ 0 \ . 
\]
This suggests that, for each vortex solution,  when $e^2 \rightarrow 0$ with constant $e^2\, \tau$, the connection $A$ approaches a Hermitian-Einstein connection on $L \longrightarrow M$. If $M$ is simply connected, there is a unique such connection, up to gauge transformations. When $M$ is an abelian variety, the Hermitian-Einstein connections on $L$ are parametrized by $\text{\rm Pic}^0 M$. In this case $\MM$ is a projective bundle over $\text{\rm Pic}^0 M$, and we expect that the connections of the vortex solutions corresponding to the same  projective fibre in $\MM$ will all converge to the same Hermitian-Einstein connection on $L$. Regarding the K\"ahler class of $\MM$, it follows from \eqref{stability1} and  \eqref{k_class2} that both the ``size" of the projective fibres and the ``size" of the base $\text{\rm Pic}^0 M$  diverge in the limit  $e^2 \rightarrow 0$ with constant $e^2\, \tau$.

\subsection{Auxiliary lemmas}

In this appendix we collect two algebraic lemmas about torus actions on vector spaces. These are relevant to understand the stability condition for toric quotients and the related gauged linear sigma-models (GLSM).

\begin{lemma}
Take a constant $\tau \in \Delta \subseteq \mathbb{R}^k$ and assume that condition (H1) of Section 5.2 is satisfied. Then there exists an index subset $I_0 \subseteq \{1, \ldots, n \}$ of cardinality $k$ such that we can write $\tau = \sum_{j \in I_0} \lambda^j \, Q_j$ with positive scalar coefficients. There is no index subset of smaller cardinality with the same property.
\end{lemma}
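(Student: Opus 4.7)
The plan is to invoke Carathéodory's theorem for conic hulls and then use condition (H1) to pin down the cardinality. First, since $\tau \in \Delta$ by assumption, I would write $\tau = \sum_{j=1}^n a^j \, Q_j$ with $a^j \geq 0$. Carathéodory's theorem for conic hulls then refines this to an expression $\tau = \sum_{j \in I_0} \lambda^j \, Q_j$ in which every coefficient is strictly positive and the weights $\{Q_j : j \in I_0\}$ are linearly independent in $\mathbb{R}^k$. Linear independence immediately forces $|I_0| \leq k$.

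Next I would argue that in fact $|I_0| = k$. Recall from Section 3.1 that the effectiveness of the torus representation implies that $Q_1, \ldots, Q_n$ span $\mathbb{R}^k$, so in particular $n \geq k$. If $|I_0| < k$ were to hold, then $I_0$ would be a proper subset of $\{1, \ldots, n\}$ and $\text{span}\{Q_j : j \in I_0\}$ would have dimension at most $|I_0| < k$, and thus be a proper subspace of $\mathbb{R}^k$. But $\tau$ lies in this span, directly contradicting (H1). This forces $|I_0| = k$ and proves the existence half of the lemma.

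The minimality assertion follows from exactly the same dichotomy: for any subset $J \subseteq \{1, \ldots, n\}$ with $\tau = \sum_{j \in J} \mu^j \, Q_j$ and $\mu^j > 0$, the vector $\tau$ lies in $\text{span}\{Q_j : j \in J\}$; if $|J| < k \leq n$, this is a proper subspace spanned by a proper subset of the weights, again contradicting (H1). Hence every such $J$ must have $|J| \geq k$. The only non-trivial input is Carathéodory's theorem for conic hulls; once it is available, (H1) does all the remaining work through a single dimension count each time, so I do not anticipate any serious obstacle.
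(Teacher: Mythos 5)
Your proof is correct, but it takes a genuinely different route from the paper. You reduce everything to the conic version of Carath\'eodory's theorem (a minimal-support nonnegative representation of $\tau$ uses linearly independent weights with strictly positive coefficients, hence at most $k$ of them) and then let condition (H1) rule out cardinality $<k$ by a dimension count; the same dimension count disposes of the minimality claim. The paper instead gives a self-contained variational construction: it picks the $(k-1)$-element index set $I'$ whose cone $\Delta_{I'}$ is closest to $\tau$, uses $\tau\in\Delta$ to produce a weight $Q_{j_0}$ with $Q_{j_0}\cdot(\tau-v')>0$, and shows that $I_0=I'\cup\{j_0\}$ works by arguing that the nearest point of $\Delta_{I_0}$ to $\tau$ cannot lie on the boundary. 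Your argument is shorter and arguably cleaner, at the cost of importing a standard external result; the paper's is longer but elementary and explicitly exhibits how the $k$-element set is assembled. Two minor points worth making explicit if you write this up: (i) Carath\'eodory with strictly positive coefficients requires $\tau\neq 0$ so that the minimal representation is nonempty, and $\tau\neq 0$ does follow from (H1) together with $n\geq k\geq 1$ (the zero vector lies in every subspace spanned by a proper subset of the weights); (ii) the span of the linearly independent family $\{Q_j : j\in I_0\}$ has dimension exactly $|I_0|$, which is what makes the dimension count against (H1) immediate.
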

\begin{proof}
For each non-empty index subset $I \subseteq \{1, \ldots, n \}$ consider the closed cone 
\[
\Delta_I \ : = \ \big\{ v \in \mathbb{R}^k : \  v =   \sum_{j \in I} \lambda^j \, Q_j  \ \text{\rm with } \lambda^j \geq 0 \big\}  \ .
\]
Condition (H1) says that $\tau \not\in \Delta_I$ for any subset $I$ of cardinality less than $k$. Define $I'$ to be an index subset of cardinality $k-1$ such that the euclidean distance between $\tau$ and $\Delta_{I'}$ is minimal among the (positive) distances between $\tau$ and the closed cones corresponding to all index subsets of cardinality $k-1$. Call $v' \in \Delta_{I'}$ the vector inside this cone that minimizes the euclidean distance to $\tau$. Then $\tau - v'$ is a non-zero vector orthogonal to $v'$.
Observe that there exists an element $j_0 \in \{0, \ldots, n\}$ such that the vector $Q_{j_0}$ has positive inner product
\begin{equation} \label{innerproduct}
Q_{j_0} \cdot (\tau - v') \ > \ 0 \ .
\end{equation}
If this were not true, the assumption that $\tau \in \Delta$ would imply that
\[
0 \ \geq \ \tau  \cdot (\tau - v') \ = \ (\tau - v') \cdot (\tau - v') \ ,
\]
which is impossible. Taking this index $j_0$, consider the 1-parameter family of vectors
\[
v_t \ := \ v' \ + \ t\, Q_{j_0} \quad \text{\rm for } t \geq 0 \ .
\]
The euclidean norm of each of these vectors is
\[
\Vert  \tau - v_t \Vert^2 \ = \ \Vert  \tau - v' \Vert^2 \ - \  2\,t\, \,  Q_{j_0} \cdot (\tau - v') \ + \ t^2 \, \Vert  Q_{j_0} \Vert^2 \ ,
\]
so from \eqref{innerproduct} it is clear that for small $\epsilon > 0$ the distance $\Vert  \tau - v_\epsilon \Vert^2$ is strictly smaller than the distance $\Vert  \tau - v' \Vert^2 $. In particular $j_0$ must not belong to $I'$, otherwise $v_\epsilon$ would be inside $\Delta_{I'}$ and this would contradict the definition of $v'$. So the index subset
\[
I_0 \ := \ I' \cup \{ j_0\}
\]
has cardinality $k$. We will now show that $\tau$ belongs to the closed cone $\Delta_{I_0}$. Suppose that this was not true and that $\tau$ was outside $\Delta_{I_0}$. Then the closest point to $\tau$ inside $\Delta_{I_0}$, which we call $v''$, would necessarily be on the boundary $\partial \Delta_{I_0}$. This boundary is the union of the closed cones determined by the subsets of $I_0$ of cardinality $k-1$. In particular there would be a subset $I'' \subset I_0$ of cardinality $k-1$ such that $v'' \in \Delta_{I''}$. Therefore the distance from the cone $\Delta_{I''}$ to $\tau$ would be equal to or smaller than the distance from the cone $\Delta_{I'} \subset \Delta_{I_0}$ to $\tau$. By the minimizing assumptions in the definition of $I'$, this distance cannot be smaller, so it must be equal. Thus
\[
\Vert  \tau - v'' \Vert^2 \ = \ \Vert  \tau - v' \Vert^2
\] 
is the shortest distance between the cone $\Delta_{I_0}$ and $\tau$. This is impossible because above we have found a vector $v_\epsilon \in \Delta_{I_0} $ whose squared distance to $\tau$ is strictly smaller than $\Vert  \tau - v' \Vert^2$. The conclusion is that $\tau$ must belong to $\Delta_{I_0}$, and so be of the form $\tau = \sum_{j \in I_0} \lambda^j \, Q_j$ with $\lambda^j \geq 0$. Finally, assumption (H1) guarantees that none of the coefficients $\lambda^j$ is zero. 
\end{proof}

\begin{lemma}\label{aux_lemma2}
Let  $I \subseteq \{ 1, \ldots, n\}$ be a non-empty index subset, let  $S_I$ denote the subspace of $\mathbb{R}^k$ spanned by the weights $\{Q_j   \in \mathbb{Z}^k : j \in I \}$, and let $\Delta_I \subseteq S_I $ be the closed cone generated by linear combinations of those weights with non-negative coefficients. Then a vector $\sigma \in \mathbb{R}^k$ is in the interior of the cone $\Delta_I$ (regarded as a subset of $S_I$) if and only if it can be written as $\sigma = \sum_{j \in I} \lambda^j\, Q_j$ with strictly positive scalar coefficients.
\end{lemma}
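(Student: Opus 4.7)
The plan is to reduce the statement to a standard convex-geometry fact about finitely generated cones and their relative interiors. Both sides of the claimed equivalence force $\sigma$ to lie in $S_I$, so without loss of generality I work entirely inside the subspace $S_I$, which by definition is spanned by $\{Q_j : j \in I\}$. Inside $S_I$, the cone $\Delta_I$ has nonempty interior, and the claim becomes the assertion that this interior is precisely the image of the strictly positive orthant under the generating map.

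For the easy direction, I consider the linear map
\[
f : \mathbb{R}^{|I|} \longrightarrow S_I, \qquad (\lambda^j)_{j \in I} \longmapsto \sum_{j \in I} \lambda^j\, Q_j.
\]
Since $\{Q_j\}_{j\in I}$ spans $S_I$, the map $f$ is surjective, hence open. Therefore the image $f\bigl(\mathbb{R}^{|I|}_{>0}\bigr)$ of the open positive orthant is an open subset of $S_I$ contained in $\Delta_I$, so any $\sigma$ admitting a strictly positive representation lies automatically in the interior of $\Delta_I$ inside $S_I$.

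For the converse, fix $\sigma$ in the interior of $\Delta_I$ (relative to $S_I$) and a small open ball $B \subseteq \Delta_I$ centred at $\sigma$. For each fixed index $j_0 \in I$, the point $\sigma - \varepsilon\, Q_{j_0}/\|Q_{j_0}\|$ belongs to $B \subseteq \Delta_I$ for all sufficiently small $\varepsilon > 0$, so it admits a non-negative representation $\sum_{j \in I} \mu^j\, Q_j$ with $\mu^j \geq 0$. Rearranging gives
\[
\sigma \;=\; \bigl(\mu^{j_0} + \varepsilon/\|Q_{j_0}\|\bigr)\, Q_{j_0} \;+\; \sum_{j \in I,\, j \neq j_0} \mu^j\, Q_j,
\]
which is a non-negative representation of $\sigma$ whose $j_0$-th coefficient is strictly positive. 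Doing this for each $j_0 \in I$ and averaging the resulting $|I|$ representations produces a single non-negative representation of $\sigma$ in which every coefficient is strictly positive, as desired.

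I do not anticipate a genuine obstacle: the argument is a straightforward application of the open-map property of a surjective linear map together with an averaging trick. The only point that requires some care is to keep track of the fact that ``interior'' here means the relative interior inside $S_I$ rather than inside the ambient $\mathbb{R}^k$, which is why the map $f$ is considered as landing in $S_I$ from the outset.
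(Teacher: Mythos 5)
Your proof is correct and follows essentially the same route as the paper's: the forward implication is the observation that strictly positive combinations of the $Q_j$, $j\in I$, form an open subset of $S_I$ (which you justify via the open-mapping property of the surjective linear map $f$, where the paper simply calls it an open condition), and the converse rests on perturbing $\sigma$ inward along the generators and using that the perturbation stays in the cone. The only cosmetic difference is that the paper subtracts a small positive combination of all the generators at once and then adds the coefficients back, whereas you subtract one generator at a time and average the resulting $|I|$ non-negative representations.
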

\begin{proof}
The condition that $\sigma$ can be written as $\sum_{j \in I} \lambda^j\, Q_j$ with strictly positive scalar coefficients is clearly an open condition in $S_I$, i.e. if $\sigma$ satisfies it, so will all the vectors in $S_I$ sufficiently close to $\sigma$. This shows that such a $\sigma$ is not on the boundary of the cone $\Delta_I$ (regarded as a subset of $S_I$), and so justifies the ``if" part of the lemma. On the other direction, suppose that the vector $\sigma$ is in the interior of $\Delta_I$. Then for small enough  coefficients $\delta^j > 0$, the vector $\sigma' = \sigma - \sum_{j \in I} \delta^j\, Q_j$ is still in the cone $\Delta_{I}$. In particular we can write $\sigma' = \sum_{j \in I} \lambda'^j\, Q_j$ for some choice of non-negative coefficients $\lambda'^j$ . Thus
\[
\sigma \ = \ \sum_{j \in I} (\lambda'^j + \delta^j ) \, Q_j \ ,
\]
where the coefficients $\lambda'^j + \delta^j$ are all strictly positive, as desired. 
\end{proof}

\begin{lemma}\label{aux_lemma1}
Take a constant $\tau \neq 0$ in $\mathbb{R}^k$ and assume that condition (H1) of Section 5.2 holds. Let  $I \subseteq I'$ be two non-empty index subsets of $\{ 1, \ldots, n\}$. Then, if  we can write $\tau = \sum_{j \in I} \lambda^j \, Q_j$ with positive scalar coefficients, we can also write $\tau = \sum_{j \in I'} \lambda^j \, Q_j$ with positive scalar coefficients. In particular, we can always write $\tau = \sum_{j=1}^n \lambda^j \, Q_j$ with positive scalar coefficients.
\end{lemma}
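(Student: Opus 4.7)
The plan is to exploit condition (H1) to see that the weights indexed by $I$ already span $\mathbb{R}^k$, and then to perturb the given decomposition by pumping small positive amounts of each extra weight $Q_{j_0}$ (for $j_0 \in I' \setminus I$) into it while compensating on the $Q_j$'s with $j \in I$. The case $I = \{1, \ldots, n\}$ is trivial, so I may assume $I$ is a proper subset of $\{1, \ldots, n\}$. Since $\tau = \sum_{j \in I} \lambda^j Q_j$, the vector $\tau$ lies in the real span $S_I$ of $\{Q_j : j \in I\}$; if $S_I$ were a proper subspace of $\mathbb{R}^k$, this would contradict (H1), so $S_I = \mathbb{R}^k$.

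Given this spanning property, for each $j_0 \in I' \setminus I$ I can expand $Q_{j_0} = \sum_{j \in I} \mu^j_{j_0} Q_j$ with real (possibly negative) coefficients. Then for any $\epsilon > 0$,
\[
\tau \ = \ \sum_{j \in I} \Big( \lambda^j \ - \ \epsilon \sum_{j_0 \in I' \setminus I} \mu^j_{j_0} \Big)\, Q_j \ + \ \epsilon \sum_{j_0 \in I' \setminus I} Q_{j_0}.
\]
Because each $\lambda^j$ is strictly positive and both sums are finite, I can fix $\epsilon > 0$ small enough that every coefficient of $Q_j$ with $j \in I$ stays strictly positive, while the coefficients $\epsilon$ in front of the new $Q_{j_0}$ are visibly positive. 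This produces the desired decomposition of $\tau$ as a positive combination of the weights indexed by $I'$.

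For the ``in particular'' clause, I would take $I' = \{1, \ldots, n\}$ and first produce some valid starting index set $I$. Since $\tau \in \Delta$, there exist $\mu^j \geq 0$ with $\tau = \sum_{j=1}^n \mu^j Q_j$; setting $I = \{j : \mu^j > 0\}$ gives a non-empty index set (as $\tau \neq 0$) over which $\tau$ has a strictly positive decomposition, and the first part of the lemma extends it to all of $\{1, \ldots, n\}$.

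I do not anticipate any real obstacle here; the only step that requires care is the initial application of (H1), which is just a contrapositive (``$\tau \in S_I$ and $I$ proper'' $\Rightarrow$ ``$S_I = \mathbb{R}^k$''), and the rest is a standard openness/perturbation argument.
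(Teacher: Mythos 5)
Your proof is correct and takes essentially the same route as the paper's: both arguments use (H1) to conclude that the weights indexed by $I$ span all of $\mathbb{R}^k$, and then write $\tau$ as a slightly perturbed positive combination over $I$ plus small positive multiples $\epsilon\, Q_{j_0}$ of the extra weights with $j_0 \in I' \setminus I$, handling the ``in particular'' clause identically. The only cosmetic difference is that the paper justifies the perturbation by the openness of the interior of the full-dimensional cone $\Delta_I$ (the content of Lemma \ref{aux_lemma2}), whereas you make the compensation explicit by expanding each $Q_{j_0}$ over the spanning set $\{Q_j : j \in I\}$ and shrinking $\epsilon$.
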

\begin{proof}
By assumption, the vector $\tau$ is in the interior of the cone $\Delta_{I}$ generated by the weights $\{Q_j \in \mathbb{R}^k : \ j \in I \}$. Condition (H1) implies that this cone is $k$-dimensional. So for small enough positive coefficients $\delta^j$, the vector $\tau' = \tau - \sum_{j \in I' \setminus I} \delta^j\, Q_j$ is still in the interior of $\Delta_{I}$, and hence we can write $\tau' = \sum_{j \in I} \lambda'^j\, Q_j$ for some choice of positive coefficients $\lambda'^j$. Thus
\[
\tau \ = \ \sum_{j \in I} \lambda'^j\, Q_j \ + \ \sum_{j \in I' \setminus I} \delta^j\, Q_j \ ,
\]
as desired. To prove the last assertion, just note that condition (H1) implies that $\tau \in \Delta_I$ for some non-empty index subset of $I \subset  \{ 1, \ldots, n\}$, and hence the assertion follows from the first part of the lemma.
\end{proof}

\vspace{1cm}

\noindent
{\small {\textsc{Centre for Mathematical Analysis, Geometry, and Dynamical Systems (CAMGSD), Instituto Superior T\'ecnico, Av. Rovisco Pais, 1049-001 Lisbon, Portugal} }}

\noindent
{\it Email address:}
{\tt joao.o.baptista@gmail.com}


\end{document}